\theoremstyle{plain} 
\newtheorem{thm}{Theorem}[section]
\newtheorem{prop}[thm]{Proposition}
\newtheorem{lemma}[thm]{Lemma}
\newtheorem{question}[thm]{Question}
\theoremstyle{remark}
\newtheorem{remark}[thm]{Remark}
\theoremstyle{definition}
\newtheorem{defin}[thm]{Definition}
\newcommand{\CC}{\mathbb{C}}
\newcommand{\PP}{\mathbb{P}}
\newcommand{\QQ}{\mathbb{Q}}
\newcommand{\ZZ}{\mathbb{Z}}
\newcommand{\kbar}{\overline{k}}
\newcommand{\lra}{\longrightarrow}
\DeclareMathOperator{\Spec}{Spec}
\DeclareMathOperator{\Gal}{Gal}
\DeclareMathOperator{\charact}{char}
\DeclareMathOperator{\Aut}{Aut}
\newcommand{\fm}{ {\mathfrak m} }
\newcommand{\fn}{ {\mathfrak n} }
\newcommand{\fo}{ {\mathfrak o} }
\newcommand{\fp}{ {\mathfrak p} }
\newcommand{\fq}{ {\mathfrak q} }
\begin{document}

\title{Specializations of Iterated Galois Groups of PCF Rational Functions}

\author{Robert L. Benedetto}
\address{Department of Mathematics and Statistics \\ Amherst College \\ P. O. Box 5000 \\ MA 01002-5000 \\ USA }
\email{rlbenedetto@amherst.edu}

\author{Dragos Ghioca}
\address{Department of Mathematics \\ University of British Columbia \\ 1984 Mathematics Road \\ BC V6T 1Z2 \\ Canada }
\email{dghioca@math.ubc.ca}

\author{Jamie Juul}
\address{Department of Mathematics \\ 1874 Campus Delivery \\ Fort Collins \\ CO 80523-1874 \\ USA }
\email{jamie.juul@colostate.edu}

\author{Thomas J. Tucker}
\address{Department of Mathematics \\ Hylan Building \\ University
of Rochester \\ Rochester, NY 14627 \\ USA }
\email{thomas.tucker@rochester.edu}

\subjclass[2020]{Primary: 37P05; Secondary: 11G50, 14G25}
\keywords{arboreal representation, Frattini group}

\begin{abstract}
We obtain a criterion for when the specialization of the iterated Galois group  for a post-critically finite (PCF) rational map is as large as possible, i.e., it equals the generic iterated Galois group for the given map.    
\end{abstract}

\maketitle

\section{Introduction}

\subsection{Notation and setting}
\label{subsec:tec}
We fix the following notation throughout this paper.

\begin{itemize}
  \item a field $k$;
  \item $f\in k(x)$ a post-critically finite rational function defined over $k$
  and of degree $d\geq 2$;
\item $K_n= k(f^{-n}(t))$ for each $n\ge 0$, where $t$ is transcendental over $k$;
  \item $K_\infty = \bigcup_{n=0}^\infty K_n$;
 \item $k_n=\kbar \cap K_n $ for each $n\ge 0$, and $k_\infty = \kbar \cap K_\infty$;
\item $G_n = \Gal(K_n/k(t))$ for each $n\ge 0$;
\item $G_\infty=\varprojlim G_{n}\cong \Gal(K_\infty/k(t))$ is the limit of the $G_n$;
\item $\alpha \in \PP^1(k)$ an arbitrary point defined over $k$;
\item $K_{\alpha,n} =  k(f^{-n}(\alpha))$ for each $n\ge 0$,
and $K_{\alpha,\infty} = \bigcup_{n=1}^\infty K_{\alpha,n}$;
  \item $G_{\alpha,n} = \Gal(K_{\alpha,n}/k)$ for each $n\ge 0$;
  \item $G_{\alpha,\infty}=\varprojlim G_{\alpha,n}\cong \Gal(K_{\alpha,\infty}/k)$
  is the limit of the $G_{\alpha,n}$;
  \item $T^d_\infty$ is the infinite $d$-ary rooted tree.
  \item $T^d_n$ is the $d$-ary rooted tree with $n$ levels.
    \end{itemize}

Here, $f^n$ denotes the iterated composition $f\circ \cdots \circ f$, with $f^0(x)=x$
and $f^1(x)=f(x)$, and $f^{-n}(a)$ denotes the inverse image of $a$ under $f^n$.
We assume for each $n$ that the equations $f^n(x)-t=0$ and $f^n(x)-\alpha$ are separable,
so that each of the fields $K_n$ and $K_{\alpha,n}$ are indeed Galois extensions
of $K_0=k(t)$ and of $K_{\alpha,0}=k$, respectively.
By identifying the $d^n$ elements of $f^{-n}(t)$ or of $f^{-n}(\alpha)$,
counted with multiplicity, with the $d^n$ vertices at the $n$-th level of the tree $T^d_n$,
the Galois groups $G_n$ and $G_{\alpha,n}$ act on $T^d_n$.
Similarly, the Galois groups $G_\infty$ and $G_{\alpha,\infty}$ act on $T^d_\infty$.

Recall that we say a point $P\in\PP^1$ is \emph{preperiodic} under $f$
if there are integers $n>m\geq 0$ such that $f^n(P)=f^m(P)$,
and we say $f$ is \emph{post-critically finite}, or PCF, if all critical point of $f$ are preperiodic.

    \subsection{Overview of the problem}
    Note that for $\alpha \in k$, there is a simple way to view
    $G_{\alpha,\infty}$ as a subgroup of $G_\infty$ up to conjugacy
    whenever $\alpha$ is not strictly post-critical.
    (That is, whenever $\alpha$ does not equal $f^n(c)$ for any critical point $c$
    of $f$, and any integer $n\geq 1$.)
    Let $\fp$ be the prime corresponding to $\alpha$ in $\Spec k[t]$,
    and let $\fq_n$ be a prime lying over it in the integral closure
    of $\Spec k[t]$ in $K_{\alpha,n}$.  Then the decomposition group
    of $\fq_n$ over $\fp$ is isomorphic to $G_{\alpha,n}$.  Choosing
    primes
    \[ \fq_1 \subseteq \fq_2 \cdots \subseteq \fq_n \subseteq \cdots \]
    gives an embedding of $G_{\alpha,\infty}$ into $G_\infty$,
    well-defined up to conjugacy.

    The following has become a standard question in the area (see
    \cite{BostonJonesArboreal, BostonJonesImage, RafeArborealSurvey,
      BDGHT2}).   

    \begin{question}\label{q}
     Let $f$ be a rational function defined over a number field
     $k$.  Let $\alpha \in k$ be a point that is not strictly post-critical for
     $f$ and is not fixed by any rational function that commutes
     with any iterate of $f$.
     Then do we have $[G_\infty: G_{\alpha,\infty}] < \infty$?
    \end{question}

    Question \ref{q} is known to have a positive answer for non-PCF
    quadratic and cubic polynomials (see \cite{BDGHT1, BDGHT2, BT2, JKL})
    if one assumes the $abc$ conjecture along with a conjecture about
    irreducibility of iterates of rational functions due to Jones and
    Levy \cite{RafeAlon}.  Jones and Manes have proved similar results
    for special families of quadratic rational functions \cite{JonesManes}.

    Unconditionally, less is known about Question~\ref{q}, but
    partial results are known in some cases.
    Over function fields, Odoni \cite{OdoniIterates,OdoniWreathProducts}
    (see also \cite{Juul}) has shown that for generic pairs $(f,\alpha)$, $G_{\infty}$
    is all of $\Aut(T^d_\infty)$.
    When $d=2$, Odoni further proved that the example
    of $(x^2-x+1,0)$ over $\QQ$ \cite{OdoniExample} \cite{OdoniExample},
    satisfies $G_{\alpha,\infty}=G_\infty=\Aut(T^d_\infty)$.
    Stoll \cite{Stoll92} later extended Odoni's construction
    to infinitely many quadratic polynomials over $\QQ$.
    Looper \cite{Looper} produced infinitely many such examples in any
    prime degree $d=p$, later generalized to all degrees $d\geq 2$ in
    \cite{BenJuul,Kadets,Specter}.

    When $G_\infty\subsetneq\Aut(T^d_\infty)$,
    for example when $f$ is PCF, there had been fewer examples
    for which the answer to Question~\ref{q} was known.
    As shown in \cite{ABCCF}, the answer is yes for nearly all $\alpha\in k$ for
    the polynomial $f(x)=x^2-1$,
    using a Hilbert irreducibility argument.
   A somewhat similar result for a specific PCF cubic has been shown in \cite{BFHJY},
   and then generalized to normalized Belyi maps in \cite{BEK}.  We
   note however that the results there only yield infinitely many
   $\alpha$ such that $G_\infty = G_{\alpha, \infty}$, rather than
   nearly all $\alpha\in k$.
   
    Here we show that Question \ref{q} has a positive answer for any
    PCF quadratic rational function and for nearly all $\alpha \in k$.

    \begin{thm}\label{hilbert-plus}
     Let $f$ be a PCF quadratic rational function defined over a
     number field $k$.  Then for all $\alpha \in k$ outside of a thin set,
     we have $G_\infty = G_{\alpha,\infty}$.  
    \end{thm}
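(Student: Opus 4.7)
The plan is to combine Hilbert's irreducibility theorem (HIT) with a Frattini-style lifting principle, reducing an infinite-level surjectivity statement to a single finite-level one. Since $k$ is a number field, hence Hilbertian, for each fixed $n$ HIT provides a thin set $E_n \subseteq k$ outside of which the natural embedding $G_{\alpha,n} \hookrightarrow G_n$ is an isomorphism. The countable union $\bigcup_n E_n$ is not thin, so HIT cannot be applied level by level; instead we must find a single level $n_0$ such that surjectivity at level $n_0$ automatically propagates to all higher levels.

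The structural ingredient required is the following \emph{Frattini property}: there exists $n_0$ such that for every $n \geq n_0$, the kernel
\[
V_n := \ker\bigl(G_{n+1} \twoheadrightarrow G_n\bigr)
\]
is contained in the Frattini subgroup $\Phi(G_{n+1})$. Because $f$ has degree $2$, the group $V_n$ is an elementary abelian $2$-group, with one factor $\ZZ/2\ZZ$ for each vertex at level $n$ of $T^2_\infty$ above which $f$ is unramified. Granting this, a standard lifting argument shows that any closed subgroup $H \leq G_{n+1}$ surjecting onto $G_n$ must equal $G_{n+1}$: if $H$ were contained in a maximal subgroup $M$, then from $HV_n = G_{n+1}$ we would deduce $G_{n+1} = HV_n \subseteq M \cdot \Phi(G_{n+1}) = M$, a contradiction.

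With the Frattini property in hand, the theorem follows quickly. Set $E := E_{n_0}$ and fix $\alpha \in k \setminus E$. The projection of the closed subgroup $G_{\alpha,\infty} \leq G_\infty$ to $G_{n_0}$ is surjective by choice of $E$; induction using the Frattini property extends this to surjectivity onto $G_n$ for every $n \geq n_0$; and passing to the inverse limit yields $G_{\alpha,\infty} = G_\infty$, as required.

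The heart of the argument, and the main obstacle, is proving the Frattini containment $V_n \subseteq \Phi(G_{n+1})$ for an arbitrary PCF quadratic rational function. Equivalently, one must show that $V_n$ admits no proper $G_n$-stable complement in $G_{n+1}$, or that no maximal subgroup of $G_{n+1}$ projects onto all of $G_n$. The PCF hypothesis confines the ramification of $K_{n+1}/K_n$ to lie above the finite post-critical set of $f$, and the desired rigidity should follow from the claim that, once $n$ is sufficiently large, the inertia subgroups at post-critical primes generate $V_n$ as a $G_n$-module. The quadratic \emph{rational} case is subtler than the polynomial situations handled in \cite{ABCCF,BFHJY,BEK}, since there is no distinguished totally ramified fixed point at infinity and the post-critical portrait may split into several cycles; we therefore expect the proof to require either a case analysis based on the classification of PCF quadratic rational maps or a uniform inertia-theoretic computation that reduces the problem to the finitely many combinatorial types of post-critical portrait.
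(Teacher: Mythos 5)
You have correctly identified the two-step strategy that the paper also uses: (1) reduce to a Frattini-type statement that surjectivity at a single finite level forces surjectivity at all levels, and (2) invoke Hilbert irreducibility to get surjectivity at that finite level outside a thin set. Your lifting argument (if $V_n\subseteq\Phi(G_{n+1})$ for $n\ge n_0$, then any closed $H\le G_\infty$ surjecting onto $G_{n_0}$ equals $G_\infty$) is sound, and your statement ``$V_n\subseteq\Phi(G_{n+1})$ for all large $n$'' is in fact equivalent to the formulation the paper uses, namely that the Frattini subgroup $F=\Phi(G_\infty)$ has finite index, i.e.\ that $K_\infty^F$ is a finite extension of $k(t)$ (paper's Lemma~\ref{1}, Lemma~\ref{H}, Lemma~\ref{almost}).

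However, there is a genuine gap: you do not prove the Frattini containment, and the route you sketch toward it is not the one that works cleanly. You propose showing that inertia subgroups at post-critical primes generate $V_n$ as a $G_n$-module, possibly via a case analysis over PCF quadratic portraits, and you explicitly flag the rational case as subtle. The paper avoids this module-theoretic analysis entirely. Instead it shows directly that $G_\infty$ has only finitely many closed maximal subgroups by a two-part \emph{counting} argument: since $G_\infty$ is pro-$2$ (more generally pro-$p$), every closed maximal subgroup is normal of index $p$, so it suffices to bound the degree-$p$ subextensions of $K_\infty/k(t)$. Geometrically, the PCF hypothesis means $\kbar\cdot K_\infty/\kbar(t)$ is ramified only over the finite post-critical set $S$, and the \'etale fundamental group of $\PP^1_{\kbar}\setminus S$ is free of rank $|S|-1$, giving exactly $(p^{|S|-1}-1)/(p-1)$ normal degree-$p$ extensions (Lemma~\ref{geo}). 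Arithmetically, the constant-field tower $k_\infty/k$ is ramified over only finitely many primes (Lemma~\ref{ram2}, via a Dedekind--Kummer / good-reduction argument), so by Hermite--Minkowski finiteness (Lemma~\ref{N}) there are only finitely many subfields of $k_\infty$ of degree $\le p$ (Theorem~\ref{k}). Lemma~\ref{K} combines these two finitenesses. This gives the Frattini finite-index statement with no reference to the fine $G_n$-module structure of $V_n$ and no case analysis of post-critical portraits, and it applies uniformly to any PCF rational $f$ with $\Gal(K_1/k_1(t))$ a $p$-group. The paper also handles a base-change subtlety you did not address: it first reduces to $k=k_1$ via Theorem~\ref{geo-does}, which requires the decomposition-group bookkeeping of Section~2 (Propositions~\ref{kn} and~\ref{any} and Lemma~\ref{con}).
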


    Theorem \ref{hilbert-plus} follows from combining the result below
    with the Hilbert irreducibility theorem.

    \begin{thm}\label{more-general}
  Let $k$ be a number field and let $f \in k(x)$ be a PCF rational
  function such that $\Gal(K_1/k_1(t))$ is a $p$-group.  Then
  there is an integer $m\geq 1$ (depending on $f$ and $k$) such that
  $G_\infty = G_{\alpha,\infty}$ whenever $G_m = G_{\alpha,m}$.
 \end{thm}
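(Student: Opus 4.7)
The strategy is a Frattini (Burnside basis) argument on the geometric iterated monodromy group $H_\infty := \Gal(K_\infty/k_\infty(t))$, which under our hypothesis becomes a topologically finitely generated pro-$p$ group.

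First I would show $H_\infty$ is a finitely generated pro-$p$ group. Since $f$ is PCF, the branch locus of every iterate $f^n$ is contained in the finite post-critical set $P$ of $f$; each ramification index of $f^n$ at a branch point is a product of local ramification indices of $f$ at points along a forward orbit, and these are all $p$-powers because $\Gal(K_1/k_1(t))$ is a $p$-group. An inductive embedding of $\Gal(K_n/k_n(t))$ into the iterated wreath product $\Gal(K_1/k_1(t)) \wr \cdots \wr \Gal(K_1/k_1(t))$ (with $n$ factors) shows each $\Gal(K_n/k_n(t))$ is a $p$-group, so $H_\infty$ is pro-$p$. Topological finite generation follows because $H_\infty$ is a quotient of the algebraic fundamental group of $\PP^1_{\kbar} \setminus P$, which is topologically finitely generated.

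By the Burnside basis theorem for finitely generated pro-$p$ groups, $\Phi(H_\infty)$ is open in $H_\infty$ and $M := K_\infty^{\Phi(H_\infty)}$ is a finite Galois extension of $k_\infty(t)$. Since $\Phi(H_\infty)$ is characteristic in $H_\infty \trianglelefteq G_\infty$, the field $M$ is Galois over $k(t)$. Using that $k_\infty/k$ is finite for PCF rational functions over a number field, $M$ is in fact finite over $k(t)$. Choose $m$ large enough that both $M \subseteq K_m$ and $k_m = k_\infty$. Now suppose $G_{\alpha,m} = G_m$. The projection $G_{\alpha,\infty} \twoheadrightarrow G_m$, composed with $G_m \twoheadrightarrow G_m/H_m = \Gal(k_m/k) = \Gal(k_\infty/k) = G_\infty/H_\infty$, gives $G_{\alpha,\infty} \cdot H_\infty = G_\infty$, so it suffices to show $H_\infty \subseteq G_{\alpha,\infty}$. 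The restriction $G_{\alpha,\infty} \cap H_\infty \twoheadrightarrow H_m := \Gal(K_m/k_\infty(t))$ is surjective (using $k_m = k_\infty$: any preimage in $G_{\alpha,\infty}$ of an element of $H_m$ automatically lies in $H_\infty$). By the choice of $m$, $\Gal(K_\infty/K_m) \subseteq \Phi(H_\infty)$, so the quotient map $H_\infty \twoheadrightarrow H_m$ factors through $H_\infty/\Phi(H_\infty)$. Hence $G_{\alpha,\infty} \cap H_\infty$ surjects onto $H_\infty/\Phi(H_\infty)$, and the Burnside basis theorem applied to the closed subgroup $G_{\alpha,\infty} \cap H_\infty$ of the pro-$p$ group $H_\infty$ forces $G_{\alpha,\infty} \cap H_\infty = H_\infty$, completing the proof.

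The main technical obstacles are (i) the topological finite generation of $H_\infty$, via analysis of inertia above the finite post-critical set, and (ii) the finiteness of $k_\infty/k$ for a PCF rational function over a number field; both reflect the finite combinatorial structure of the post-critical data. Once these are in place, the Burnside basis argument runs cleanly, and the real conceptual content of the theorem lies in recognizing that the pro-$p$ hypothesis on $\Gal(K_1/k_1(t))$ is exactly what is needed to run a Frattini-type descent.
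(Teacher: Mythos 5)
Your overall strategy — a Frattini/Burnside basis argument — is the same as the paper's, and the geometric pieces (pro-$p$ via wreath products, finite generation of the geometric monodromy from the finite post-critical set) are sound. However, there is a genuine gap at the step where you assert ``Using that $k_\infty/k$ is finite for PCF rational functions over a number field, $M$ is in fact finite over $k(t)$,'' and again where you ``Choose $m$ large enough that \dots $k_m = k_\infty$.'' The extension $k_\infty/k$ is \emph{not} finite in general for PCF maps. For instance, take $f(x)=x^2$ over $k=\QQ$: one has $k_n = \QQ(\zeta_{2^n})$, so $k_\infty = \QQ(\zeta_{2^\infty})$, which is an infinite extension of $\QQ$. (More generally, Pink's work shows that for PCF quadratics $x^2+c$ the constant field extension $k_\infty$ typically sits inside $\QQ(\zeta_{2^\infty})$, and it is infinite.) Once $k_\infty/k$ is infinite, there is no $m$ with $k_m = k_\infty$, and the field $M = K_\infty^{\Phi(H_\infty)}$ contains $k_\infty(t)$ and so has infinite degree over $k(t)$. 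Your descent from the geometric Frattini quotient to $G_m$ therefore breaks down.

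The paper avoids this by applying the Frattini argument to the \emph{arithmetic} group $G_\infty = \Gal(K_\infty/k(t))$ directly, after first reducing to the case $k_1 = k$ (via Theorem~\ref{geo-does}) so that $G_\infty$ is pro-$p$. The point is that one does not need $k_\infty/k$ finite — only that $K_\infty$ contains finitely many degree-$p$ extensions of $k(t)$, which splits into two facts: the geometric count you have (Lemma~\ref{geo}: finitely many degree-$p$ covers of $\PP^1_{\kbar}$ branched over the post-critical set), and the arithmetic count that $k_\infty$ contains only finitely many degree-$p$ extensions of $k$. The latter is Theorem~\ref{k}, which follows from a Hermite–Minkowski argument (Lemma~\ref{N}) combined with the good-reduction/finite-ramification statement Lemma~\ref{ram2}: $k_\infty/k$ is ramified only at finitely many primes. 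This ``finitely many degree-$p$ subextensions'' statement is strictly weaker than finiteness of $k_\infty/k$, and it is what the proof genuinely requires. If you replace your finiteness claim with this Hermite–Minkowski input and then run the Frattini argument on $G_\infty$ rather than on $H_\infty$, the argument goes through and recovers the paper's proof.
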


 Note in particular that Theorem \ref{more-general} applies to any PCF quadratic
 rational function.

 We have a little bit more precision about the integer $m$ from the conclusion of Theorem~\ref{more-general} in the case of certain
 polynomials. 

 \begin{thm}\label{quadratic3}
   Let $f(x) = x^{p^n} + c$ be a PCF polynomial defined over a number
   field $k$.  Let $N$ be the size of the forward orbit of the
   critical point $0$.  Then there is a finite extension $k'$ of $k_1$
   such that for any $\alpha\in k$, we have
   $G_{\alpha, \infty} = G_\infty$ if and only if
  \begin{equation}\label{k1}
    |\Gal(K_{\alpha, N} \cdot k' / k)| =|G_N| \cdot [k': k_1].
  \end{equation}
\end{thm}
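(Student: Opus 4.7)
The plan is to apply Theorem~\ref{more-general} and translate its conclusion $G_m = G_{\alpha,m}$ into the level-$N$ numerical criterion \eqref{k1}, choosing $k'$ precisely so that the translation becomes an equivalence. The hypothesis of Theorem~\ref{more-general} holds here because $K_1/k_1(t)$ is the cyclic Kummer extension obtained by adjoining a $p^n$-th root of $t-c$ to $k_1(t)$, hence a $p$-group.

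\textbf{Step 1 (Unpacking \eqref{k1}).} Since $\alpha \in k$ with $\alpha \ne c$, the field $K_{\alpha,1}$ contains both $\zeta_{p^n}$ and a $p^n$-th root of $\alpha-c$, so $k_1 = k(\zeta_{p^n}) \subseteq K_{\alpha,N}$. Since $K_{\alpha,N}$ and $k'$ are both Galois over $k$, the compositum formula together with the specialization bound $|G_{\alpha,N}| \le |G_N|$ gives
\[
  [K_{\alpha,N}\cdot k' : k] \;=\; \frac{|G_{\alpha,N}|\cdot [k':k]}{[K_{\alpha,N}\cap k' : k]} \;\le\; \frac{|G_N|\cdot [k':k]}{[k_1:k]} \;=\; |G_N|\cdot [k':k_1],
\]
with equality if and only if both (a) $G_{\alpha,N} = G_N$ and (b) $K_{\alpha,N} \cap k' = k_1$ hold. (The elementary check is that the two individual inequalities $|G_{\alpha,N}|\le |G_N|$ and $[K_{\alpha,N}\cap k':k]\ge [k_1:k]$ force the product equality to split into both.)

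\textbf{Step 2 (Choice of $k'$).} I would take $k' = k_\infty$, the algebraic closure of $k_1$ inside $K_\infty$. A key technical lemma is to show $[k_\infty : k_1] < \infty$. Since $f(x) = x^{p^n} + c$ has a unique finite critical point $0$ with forward orbit $\mathcal{O}$ of size $N$, the branch locus of every iterate $f^j$ is contained in $\mathcal{O}\cup\{\infty\}$, and the constants appearing at each level come from Kummer products of $p^n$-th roots of elements lying over $\mathcal{O}$. One shows these stabilize at a finite level, so that $k_\infty = k_m$ for the $m$ supplied by Theorem~\ref{more-general}.

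\textbf{Step 3 ($\Leftarrow$ direction).} Assume \eqref{k1}, giving (a) and (b) from Step~1. The goal is to promote $G_{\alpha,N} = G_N$ to $G_{\alpha,m} = G_m$, whereupon Theorem~\ref{more-general} yields $G_{\alpha,\infty} = G_\infty$. The kernel of $G_m \twoheadrightarrow G_N$ is a (pro-)$p$ group whose arithmetic part acts on $k_m / k_N \subseteq k_\infty / k_1$, while its geometric part is determined by the Kummer tower above the postcritical orbit $\mathcal{O}$. Condition (b), namely $K_{\alpha,N} \cap k_\infty = k_1$, provides the linear disjointness of $K_{\alpha,N}$ from $k_m$ over $k_1$ that is needed to lift the arithmetic part into $G_{\alpha,m}$; the geometric part is then lifted by a Kummer-Hilbert argument based on $G_{\alpha,N} = G_N$.

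\textbf{Step 4 ($\Rightarrow$ direction).} Conversely, if $G_{\alpha,\infty} = G_\infty$, then restriction to level $N$ immediately yields (a). For (b), the identification $G_{\alpha,\infty} \cong G_\infty$ respects the natural surjection onto $\Gal(k_\infty/k)$; this forces the decomposition group at $\alpha$ to act faithfully on all of $k_\infty$, which (combined with the stabilization $k_\infty = k_m$ from Step~2) translates into $K_{\alpha,N} \cap k_\infty = k_1$. Hence \eqref{k1} holds.

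The hard part will be Step~2: proving $k_\infty / k_1$ is finite and identifying it with $k_m$ explicitly. This requires a careful Kummer-theoretic bookkeeping of which algebraic constants are adjoined at each level of the iterated preimage tower, and it is precisely here that the PCF hypothesis, the finiteness of $\mathcal{O}$, and the one-critical-point structure of $x^{p^n} + c$ combine in an essential way. Once this structural input is in hand, the remaining arguments in Steps 1, 3, and 4 are relatively formal.
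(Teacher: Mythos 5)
Your Step~1 decomposition of \eqref{k1} into the two conditions (a) $G_{\alpha,N}=G_N$ and (b) $K_{\alpha,N}\cap k'=k_1$ is correct and agrees with the paper's computation, which establishes equality in exactly those two places. However, there are two substantial problems with the rest of the proposal.

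\textbf{The choice $k'=k_\infty$ fails.} You assert in Step~2 that $[k_\infty:k_1]<\infty$ and plan to prove it by ``Kummer-theoretic bookkeeping,'' but this is not merely unproven---it is in general false. Even for PCF maps $x^{p^n}+c$ the constant field $k_\infty$ can be an infinite extension of $k_1$ (e.g., it can contain an infinite tower of $p$-power cyclotomic extensions, as Pink's results on quadratic PCF maps indicate). What \emph{is} finite, by Theorem~\ref{k} (ramification bounded over a number field plus Hermite--Minkowski), is the compositum $k'$ of the degree-$p$ extensions of $k_1$ inside $k_\infty$, and that is precisely the paper's choice. The paper then needs an extra step you do not have: from $k'\cap K_{\alpha,N}=k_1$ it deduces $k_\infty\cap K_{\alpha,N}=k_1$, using the fact that $\Gal(K_{\alpha,N}/k_1)$ is a $p$-group, so any nontrivial subextension of $k_1$ in $K_{\alpha,N}\cap k_\infty$ would have to contain a degree-$p$ extension of $k_1$, which would then lie in $k'$, contradicting (b).

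\textbf{Step 3 waves past the actual engine.} You propose to promote $G_{\alpha,N}=G_N$ to $G_{\alpha,m}=G_m$ and then invoke Theorem~\ref{more-general}, with the lifting justified by an unspecified ``Kummer-Hilbert argument.'' But the entire content of Theorem~\ref{quadratic3}, as opposed to Theorem~\ref{more-general}, is that the level can be taken to be $N$ (after passing to the base $k_\infty$), and that reduction is not formal. The paper's proof (Lemma~\ref{geo}, Lemma~\ref{count1}, Lemma~\ref{count-final}, Theorem~\ref{geom}) works by \emph{counting}: over $\kbar(t)$ there are at most $(p^N-1)/(p-1)$ degree-$p$ geometric subextensions of $K_\infty$ (by the \'etale-fundamental-group computation over $\mathbb{P}^1$ minus $N+1$ points), and at least that many already live in $K_N$ (by abelianization of the $N$-fold wreath product). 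Hence every degree-$p$ geometric subextension of $K_\infty$ already sits inside $k_\infty\cdot K_N$, so the Frattini fixed field of $\Gal(K_\infty/k_\infty(t))$ is contained in $k_\infty\cdot K_N$, and Lemma~\ref{H} then finishes. Without some equivalent of this counting argument, you cannot conclude that matching Galois groups at level $N$ forces matching at all levels; your Step~3 as written would have to rediscover these lemmas to be made rigorous.

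A smaller point: you implicitly assume $\alpha\neq c$ in Step~1; the post-critical case should be disposed of separately (both sides of the equivalence fail there), as the paper does via Proposition~\ref{any}.
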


\begin{remark}
Theorem \ref{quadratic3} is stated with an explicit description of
$k'$ in Theorem \ref{quadratic2}; more precisely, we have that $k'$ is the compositum of the finitely many extensions of $k$
of degree $p$ over $k_1$ contained in $k_\infty$.
\end{remark}

\subsection{Our strategy of proof}

The proofs of both Theorems~\ref{more-general}~and~\ref{quadratic3}
leverage properties of Frattini subgroups
(see Section \ref{more}).  Because $G_\infty$ is a $p$-group
in Theorems \ref{more-general} and \ref{quadratic3}, every maximal
closed subgroup of $G_\infty$ is normal and of index $p$ in $G_\infty$.
By the theory of Frattini subgroups, then,
Theorem~\ref{more-general} reduces to showing that $K_\infty$ contains
only finitely many Galois extensions of degree $p$ over $K$.
In Theorem~\ref{k} of Section~\ref{ext}, we show that for any number field $k$,
the resulting base field extension $k_\infty$ contains only finitely many extensions of
$k$ of bounded degree (with no conditions on $f$).
If we add the hypothesis that $f$ is PCF, then using standard facts
about fundamental groups, we prove in Lemma~\ref{geo} that
the field $K_\infty$ also contains only finitely many extensions of bounded degree
over $k_\infty(t)$.
Combining Lemma~\ref{geo} with Theorem~\ref{k} gives
Theorem~\ref{more-general}, which we prove in Section~\ref{more}.
In Section~\ref{quadratic},
using more careful arguments about the exact number of extensions of
degree $p$ in $K_\infty$ (see Lemma~\ref{count1}),
we prove Theorem~\ref{quadratic2}, which is the more precise and explicit form
of Theorem~\ref{quadratic3}.

We note that our proofs do not involve deriving information about the
group $G_\infty$ itself (as is done in \cite{JKMT, PinkQuadratic,
  PinkQuadraticInfiniteOrbits, ABCCF, BFHJY}, for example).  In a
future paper, we plan to give a concrete presentation of the explicit
data about $G_\infty$ for PCF quadratic polynomials developed in
\cite{PinkQuadratic}.


\section{Extensions of the base field}\label{ext}

In this section, we collect some information about the fields $k_n$.
We begin with the following dynamical analog of a standard result
\cite[Proposition~VII.4.1]{AEC}
on the ramification of Tate modules of elliptic curves.


\begin{lemma}\label{ram}
  Let $R$ be a discrete valuation ring with maximal ideal $\fp$,
  and let $k$ be the field of fractions of $R$.  Let $g(x)\in R[x]$
  be a nonconstant polynomial with coefficients in $R$,
  and let ${\bar g}(x)$ denote its image in $(R/\fp)[x]$.
  Suppose that $\deg {\bar g} = \deg g$ and that ${\bar g}$ is separable
  over $R/\fp$.  
  Let $L$ be a splitting field of the polynomial $g(x)-t\in R[t][x]$ over $k(t)$,
  where $t$ is transcendental over $k$.
  Then $\fp$ does not ramify in $L \cap \kbar$.
\end{lemma}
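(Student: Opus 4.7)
The plan is to prove the lemma in two stages: first show that $L$ is unramified over $k(t)$ at the prime $\fp R[t]$ of $R[t]$, and then descend this to unramifiedness of $\fp$ in $M := L \cap \kbar$ by comparing ramification in $M/k$ with ramification in $M(t)/k(t) \subseteq L$. The first stage is a dynamical analog of the standard fact (underlying Silverman's Proposition~VII.4.1) that a Galois splitting field of a polynomial with unit discriminant has trivial inertia.

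For the first stage, I would localize to the DVR $\tilde R := R[t]_{\fp R[t]}$, which has the same uniformizer as $R$ and residue field $\kappa(\fp)(t)$. After rescaling one may assume $g$ is monic (possible since the leading coefficient of $g$ is a unit of $R$), so each root $\alpha_i$ of $g(x)-t$ in $L$ lies in the integral closure $T$ of $\tilde R$ in $L$. The discriminant $\prod_{i<j}(\alpha_i-\alpha_j)^2 \in \tilde R$ reduces modulo $\fp$ to the discriminant of $\bar g(x)-t \in \kappa(\fp)(t)[x]$, which is nonzero because the separability of $\bar g$ prevents $\bar g(x)-t$ from sharing a root with $\bar g'(x)$ (any such root would be algebraic over $\kappa(\fp)$, forcing $t$ to be as well). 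Hence the discriminant is a unit in $\tilde R$, and each difference $\alpha_i - \alpha_j$ ($i \ne j$) is a unit in $T$. Now for any prime $\mathfrak P$ of $T$ over $\fp \tilde R$ and any $\sigma$ in the inertia group $I(\mathfrak P/\fp \tilde R)$, the congruence $\sigma(\alpha_i) - \alpha_i \in \mathfrak P$ together with the fact that $\sigma$ permutes the $\alpha_i$ forces $\sigma(\alpha_i) = \alpha_i$ for all $i$; since the $\alpha_i$ generate $L$ over $k(t)$, $\sigma = \mathrm{id}$. The residue extension at $\mathfrak P$ is separable (its generators satisfy the separable polynomial $\bar g(x)-t$ over $\kappa(\fp)(t)$), so $L/k(t)$ is unramified at $\fp R[t]$.

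For the descent, let $S$ be the integral closure of $R$ in $M$; noting $\kbar \cap k(t) = k$, the minimal polynomial over $k$ of any $\alpha \in M$ agrees with its minimal polynomial over $k(t)$, so $M/k$ is separable and $S[t]$ is the integral closure of $R[t]$ in $M(t) \subseteq L$. Primes of $S[t]$ over $\fp R[t]$ correspond bijectively to primes $\fq$ of $S$ over $\fp$ via $\fq \mapsto \fq S[t]$, and a uniformizer of $\fq S_\fq$ remains a uniformizer of $(S[t])_{\fq S[t]}$, so $e(\fq S[t]/\fp R[t]) = e(\fq/\fp)$. For any prime $\mathfrak P$ of $T$ above $\fq S[t]$, multiplicativity of ramification indices gives $e(\fq/\fp) = e(\fq S[t]/\fp R[t])$ dividing $e(\mathfrak P/\fp \tilde R) = 1$, hence $\fp$ is unramified in $M/k$.

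The substantive content of the argument is the unit-discriminant calculation giving trivial inertia. The main technical care lies in the base change $R \leadsto R[t]$: verifying that primes of $S[t]$ over $\fp R[t]$ correspond correctly to primes of $S$ over $\fp$ and that ramification indices are preserved. This is routine but requires careful bookkeeping, and I expect it to be the most intricate part of the written proof.
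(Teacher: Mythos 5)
Your argument is correct, but it diverges from the paper's proof at both stages. For the first stage, the paper does not compute a discriminant or invoke inertia groups; instead it observes that $\bar g(x)-t$ is \emph{irreducible} (not merely separable) over $(R/\fp)(t)$, so that for a single root $\beta_i$ the extension $k(t)(\beta_i)/k(t)$ has a unique prime above $\fp R[t]$ whose residue degree equals $\deg g$, and hence is unramified; it then passes to the compositum over all $\beta_i$ to conclude that $L$ is unramified. Your unit-discriminant argument for triviality of the inertia group is an equally standard and somewhat more self-contained route, at the cost of invoking the Galois theory of inertia for the Galois extension $L/k(t)$. For the descent, the paper's treatment is more elaborate: it first completes $R$ at $\fp$ and replaces $k$ by the maximal subextension of $\ell:=L\cap\kbar$ that is unramified at $\fp$, reducing the claim to $\ell=k$, and then analyzes the minimal polynomial of a uniformizer of the integral closure of $R$ in $\ell$, applying the Dedekind--Kummer theorem twice to conclude. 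Your version is shorter and avoids both the completion and the Dedekind--Kummer bookkeeping: you compare ramification directly in the DVR tower $R[t]_{\fp R[t]}\subseteq (S[t])_{\fq S[t]}\subseteq T_\fP$ using multiplicativity of ramification indices, after checking that the primes of $S[t]$ above $\fp R[t]$ are precisely the $\fq S[t]$ for primes $\fq$ of $S$ over $\fp$, with $e(\fq S[t]/\fp R[t])=e(\fq/\fp)$. You are right that this last verification is the one point that needs care; it holds, and both proofs are valid.
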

\begin{proof}
  We may suppose that $R$ is $\fp$-adically complete since $\fp$ will
  ramify in $L \cap \kbar$ if and only if the completion of $R$ at
  $\fp$ ramifies in the compositum of $L \cap \kbar$ with the
  completion of $k$ at $\fp$.  We may also assume that $L$ does not
  contain any algebraic extensions of $k$ that are unramified at
  $\fp$, after replacing $R$ with its integral closure in the maximal
  unramified algebraic extension of $k$ at $\fp$
  in $L$.
  
  Let $\fq = \fp R[t]$.  Then the localization $R[t]_\fq$ is a
  discrete valuation ring with maximal ideal $\fq R[t]_{\fq}$.  We
  will begin by showing that the prime $\fq R[t]_{\fq}$ does not ramify in $L$.

  Let $M$ be an extension of $k(t)$ generated by a root of $g(x) - t$,
  and let $B$ be the integral closure of $R[t]_{\fq}$ in $M$.
  Then ${\bar g}(x) - t\in (R/\fp)[t][x]$
  is separable and irreducible over $(R/\fp)(t)$ and is of degree $\deg g$.
  Hence, there
  is a single prime $\fm$ of $B$ lying over $\fq R[t]_{\fq}$, and
  \[ \big[B/\fm: R[t]_{\fq} /(\fq R[t]_{\fq}) \big] = \deg g = [M:k(t)]. \]
  Thus, $\fq R[t]_{\fq}$ does
  not ramify in $M$. Because $L$ is the compositum of $\deg g$ copies of $M$
  (one for each root of $g(x) - t$), we see that $\fq R[t]_\fq$ does not ramify in $L$,
  as we claimed above.

  Let $\ell = L \cap \kbar$.  Then we have a containment of fields
  $k(t) \subseteq k(t) \cdot \ell \subseteq L$.
  We must show that $\fp$ does not ramify in $\ell$;
  in fact, we will show that $\ell=k$.

  Let $R'$ be the integral closure of $R$ in
  $\ell$.  Since $R$ is complete, the ring $R'$ is a discrete
  valuation ring with maximal ideal $\fm$, and we have $\fp R' = \fm^e$ for
  some integer $e$.  Writing $\fm =  \alpha\cdot R'$
  (for a suitable element $\alpha\in R'$), we see that $R' = R[\alpha]$,
  since the only elements of $k[\alpha]$ with $\fm$-adic absolute
  value less than or equal to $1$ are those are in $R[\alpha]$.
  Hence, the integral closure of $R[t]_\fq$ in $L$ is
  $R'[t]_\fq = R[t]_\fq[\alpha]$. 
  Let $h(x)\in R[x]$ be the minimal polynomial of $\alpha$ over $k$.
  Let ${\bar h}$ be the image of
  $h$ in $(R/\fp)[x]$.  Then ${\bar h} = (x-\bar{\beta})^e$ for some
  $\beta \in R$, since $\fm^e = \fp$, by the Dedekind-Kummer theorem
  (see, for example, \cite[Proposition~I.8.1]{Neukirch})
  applied to the extension $R[\alpha]$ of $R$.
  
  The polynomial $h$ remains irreducible over $k(t)$,
  and thus applying the Dedekind-Kummer theorem to the
  extension $R[t]_\fq[\alpha]$ of $R_\fq$, we see that
  $\fq R[t]_\fq[\alpha]$ must also have the form $\fn^e$ for some
  maximal ideal $\fn$ in $R[t]_\fq[\alpha]$.  Since $R[t]_\fq[\alpha]$ does
  not ramify in $L$, we must have $e = 1$, and hence $\ell = k$.
\end{proof}

\begin{defin}
  Let $k$ be a field, and let $f: \PP_k^1 \lra \PP_k^1$, written as
  $[P(x,y) : Q(x,y)]$ with $P,Q$ each homogenous of the same degree $d\geq 1$ in
  $k[x,y]$.  Let $R$ be a Dedekind domain with field of fractions
  $k$.  We say that $f$ has (\emph{explicit}) {\em good reduction} at a prime $\fp$ of $R$
  if the coefficients of $P$ and $Q$ are in $R_\fp$, the reductions 
  $P_\fp, Q_\fp\in (R/\fp)[x,y]$ of $P$ and $Q$ at $\fp$
  have no common roots in the algebraic closure of
  $R/\fp$, and $\max(\deg P_\fp, \deg Q_\fp) =d$.    We say
  that $f$ has (\emph{explicit}) {\em good separable reduction} at $\fp$ if in addition
  the map over $R/\fp$ sending $[x:y]$ to $[P_\fp(x,y) : Q_\fp(x,y)]$
  is separable.   
\end{defin}

Note that if $f$ has good separable reduction at a prime $\fp$, then so does $f^n$ for
any $n\geq 0$.

\begin{lemma}\label{ram2}
  Suppose that $k$ is the field of fractions of a Dedekind domain $R$
  and that the PCF rational function $f$ is separable.  Then there are at most finitely
  many primes $\fp$ of $R$ that ramify in $k_\infty$.
\end{lemma}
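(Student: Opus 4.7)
The plan is to take $S$ to be the (finite) set of primes of $R$ at which $f$ fails to have good separable reduction, and to show that every $\fp \notin S$ is unramified in $k_n$ for each $n \geq 0$, whence also in $k_\infty = \bigcup_n k_n$. Finiteness of $S$ is standard: once one fixes a homogeneous representative $f = [P:Q]$ with $P,Q \in R[x,y]$, only finitely many primes divide the resultant $\res(P,Q)$ or the leading coefficients needed to maintain $\max(\deg P_\fp, \deg Q_\fp) = d$, while the separability of $f$ ensures $P'Q - PQ' \in R[x,y]$ is a nonzero polynomial whose reduction at $\fp$ is nonzero for all but finitely many $\fp$. Since good separable reduction is preserved under composition, each iterate $f^n$ has good separable reduction at every $\fp \notin S$.

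Fix $\fp \notin S$ and $n \geq 1$. The argument is modeled on the proof of Lemma~\ref{ram}, with the polynomial $g(x) - t$ there replaced by $P_n(x) - tQ_n(x) \in R[t][x]$, where $f^n = P_n/Q_n$ is written in coprime form. Exactly as in that proof, we may complete $R$ at $\fp$ and then replace it by its integral closure in the maximal unramified subextension of $K_n \cap \kbar$; our task is then reduced to showing that $\fq := \fp R[t]$ is unramified in $K_n$, after which the Dedekind--Kummer computation in the second half of the proof of Lemma~\ref{ram} carries over verbatim to conclude $K_n \cap \kbar = k$.

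To show $\fq$ is unramified in $K_n$, pick any root $\beta$ of $P_n(x) - tQ_n(x)$ and let $M := k(t)(\beta) \subseteq K_n$, so $[M : k(t)] = d^n$. Because $K_n$ is the compositum over $k(t)$ of the $d^n$ conjugates of $M$ and $K_n/k(t)$ is Galois, a standard inertia-group argument shows that if $\fq$ is unramified in every conjugate of $M$, then it is unramified in $K_n$; hence it suffices to prove $\fq$ is unramified in $M$. The new feature relative to Lemma~\ref{ram} is that $\beta$ need not be integral over $R[t]$ when $f^n$ is a proper rational function, so one cannot directly invoke Dedekind--Kummer on $P_n(x) - tQ_n(x)$. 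The workaround is to argue scheme-theoretically on $\PP^1_R$: good separable reduction of $f^n$ at $\fp$ says that the reduction $\PP^1_{R/\fp} \to \PP^1_{R/\fp}$ of $f^n$ is a separable finite flat map of degree $d^n$, so its scheme-theoretic fiber over the generic point of the target special fiber (which corresponds to $\fq$) is supported only at the generic point of the source special fiber, with residue field $(R/\fp)(x)$ of degree $d^n$ over $(R/\fp)(t)$. Translating back, the integral closure of $R[t]_\fq$ in $M$ has a unique prime over $\fq$ of residue degree $d^n$, forcing ramification index $1$.

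The principal obstacle is this last point: extending the ``unique prime of full residue degree'' conclusion of Lemma~\ref{ram} from polynomials to rational functions. Once that is in place, unramification of $\fp$ in each $k_n$, and hence in $k_\infty$, follows by taking the union, noting that the finite bad set $S$ does not depend on $n$.
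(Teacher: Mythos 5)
Your proposal follows the same two-step strategy as the paper: show the set $S$ of primes of bad separable reduction is finite, then apply (the argument of) Lemma~\ref{ram} to $f^n$ at each $\fp\notin S$ and take the union over $n$. Where you differ is in explicitly confronting the fact that Lemma~\ref{ram} is stated only for a polynomial $g\in R[x]$, whereas $f^n$ is a rational function, so the roots of $P_n(x)-tQ_n(x)$ need not be integral over $R[t]$; the paper's proof simply cites Lemma~\ref{ram} without comment on this point. Your scheme-theoretic fix (reading off unramifiedness of $\fq$ over $\fp R[t]_\fq$ from good separable reduction of $f^n$ as a finite flat separable map of $\PP^1_{R/\fp}$) is valid. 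A more elementary patch, closer in spirit to the proof of Lemma~\ref{ram}, is to observe that the leading $x$-coefficient of $P_n(x)-tQ_n(x)$ is already a unit in the local ring $R[t]_\fq$: good reduction forces at least one of the reductions of the leading coefficients of $P_n,Q_n$ to be nonzero, and $t$ is a unit in $R[t]_\fq$ since $\fq=\fp R[t]_\fq$ does not contain it; dividing by this unit makes the polynomial monic over $R[t]_\fq$, its roots become integral, and the Dedekind--Kummer computation of Lemma~\ref{ram} then carries over verbatim. Either way, your argument is correct and is essentially the paper's proof with a detail the paper leaves implicit worked out carefully.
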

\begin{proof}
Since $f$ is separable, the set $S$ of primes at which $f$ fails to have
good separable reduction is a finite set.
Hence, for all $n\geq 0$ and all primes $\fp$ of $R$ outside of $S$,
the function $f^n$ has good separable reduction.
By Lemma~\ref{ram}, it follows that any such $\fp$ is unramified in $K_n \cap \kbar$,
for all $n\geq 0$. Therefore, any prime $\fp$ outside the finite set $S$
is unramified in $k_{\infty}$.
\end{proof}

We will also use the following result, which is proved in
\cite[Theorem~II.2.13]{Neukirch}.

\begin{lemma}\label{N}
  Let $D\in\mathbb{N}$ and let $S$ be a finite set of places of a number field $L$.
  There are only
  finitely many extensions $L'$ of $L$ unramified outside of $S$
  such that $[L:L'] \leq D$.
\end{lemma}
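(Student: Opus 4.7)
The plan is to reduce Lemma~\ref{N} to Hermite's classical theorem asserting that there are only finitely many number fields of bounded absolute discriminant. The first step is to pass to the Galois closure: any $L'/L$ with $[L':L]\leq D$ has a Galois closure (inside a fixed algebraic closure of $L$) of degree at most $D!$ over $L$, which is still unramified outside $S$. Since a fixed finite Galois extension contains only finitely many intermediate subfields, it is enough to prove finiteness for Galois extensions $L'/L$ of degree at most $D':=D!$ that are unramified outside $S$.

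The second step is to bound the absolute discriminant $d_{L'/\QQ}$. Let $S_0$ be the finite set of rational primes lying below the places in $S$ together with those ramifying in $L/\QQ$. Any $L'$ as above is unramified over $\QQ$ outside $S_0$, so the different $\mathfrak{d}_{L'/\QQ}$ is supported on the finitely many primes of $L'$ above $S_0$. At any such prime $\fP$ the ramification index $e$ divides $[L':\QQ]\leq D'\cdot [L:\QQ]$ and is therefore bounded, and the standard upper bound
\[ v_{\fP}\bigl(\mathfrak{d}_{L'/\QQ}\bigr) \;\leq\; e-1 + v_{\fP}(e)\cdot e \]
(which covers tame and wild ramification uniformly) controls the local contribution in terms of $e$ and the residue characteristic $p$ of $\fP$. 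Since the number of primes of $L'$ above each $p\in S_0$ is at most $[L':\QQ]$, combining these local bounds yields $|d_{L'/\QQ}| \leq C$ for a constant $C=C(L,S,D)$.

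The third step is to invoke Hermite's theorem: up to isomorphism there are only finitely many number fields of bounded degree and bounded absolute discriminant. Embedding each isomorphism class into a fixed algebraic closure of $L$ produces only finitely many actual subfields, completing the reduction to Galois $L'$ and hence the proof.

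The main technical obstacle is the bound on the exponent of the different at wildly ramified primes, where $v_{\fP}(e)>0$ and the clean tame formula $e-1$ no longer applies; there the exponent is governed by the higher ramification filtration and must be estimated by something like the displayed inequality above. This is exactly the point at which the hypothesis $[L':L]\leq D$ is used in an essential way: without an a priori bound on $e$, the wild contribution to the discriminant is unbounded, and Hermite's theorem cannot be applied.
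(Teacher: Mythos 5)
Your proof is correct and is essentially the standard argument: the paper does not prove Lemma~\ref{N} directly but cites it to Neukirch, Theorem~II.2.13, whose proof proceeds exactly as you describe, by bounding the absolute discriminant (using that ramification is confined to the finitely many primes over $S_0$ and that the different exponent is bounded in terms of the ramification index) and then invoking Hermite's finiteness theorem. One small note: your displayed different bound carries a stray factor of $e$ --- the sharp statement (Serre, \emph{Corps Locaux}, III.6, Prop.~13) is $v_{\fP}(\mathfrak{d}) \le e-1 + v_{\fP}(e)$ with $v_{\fP}$ the normalized valuation on the completion at $\fP$, equivalently $e-1+e\cdot v_p(e)$ where $p$ is the residue characteristic --- but since you only need \emph{some} upper bound depending on $e$ and $p$, and both are bounded here, this does not affect the argument.
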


Lemmas~\ref{ram2}~and~\ref{N} immediately yield the following useful result.

\begin{thm}\label{k}
Let $k$ be a number field.  Then for any $D\geq 1$,
the field $k_\infty$ contains only finitely many intermediate fields $k'$
with $[k':k]\leq D$.
\end{thm}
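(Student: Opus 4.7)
The plan is simply to combine Lemma~\ref{ram2} with Lemma~\ref{N}. Take $R = \mathcal{O}_k$, the ring of integers of the number field $k$; since $\ch k = 0$, the PCF rational function $f$ is automatically separable, so Lemma~\ref{ram2} applies and yields a finite set $S_0$ of primes of $R$ that ramify in $k_\infty$. Enlarge $S_0$ to a finite set $S$ of places of $k$ by adjoining the (finitely many) archimedean places.

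Any intermediate field $k \subseteq k' \subseteq k_\infty$ is contained in $k_\infty$, so $k'$ is unramified at every finite place of $k$ outside $S_0$, hence unramified outside $S$ when viewed as an extension of $k$. Applying Lemma~\ref{N} with $L = k$ and this finite set $S$ of places then produces only finitely many extensions $k'/k$ of degree at most $D$ unramified outside $S$, which gives the conclusion.

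There is essentially no obstacle here beyond checking that the two cited lemmas line up: the only thing to verify is that ``unramified in $k_\infty$'' for a prime $\fp$ of $R$ implies ``unramified in $k'$'' for every intermediate field $k'$, which is immediate since ramification indices are multiplicative in towers. Everything else is packaged inside the two lemmas, and the assertion follows.
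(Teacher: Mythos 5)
Your argument is exactly the paper's: apply Lemma~\ref{ram2} to get a finite set of ramified primes, enlarge by the archimedean places, and then invoke Lemma~\ref{N} (Hermite--Minkowski). The only additional remark you make — that unramified in $k_\infty$ implies unramified in every intermediate $k'$ by multiplicativity of ramification indices — is correct and is implicitly used in the paper's proof as well.
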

\begin{proof}
  By Lemma \ref{ram2}, there at most finitely many primes in
  $k$ that ramify in $k_\infty$. Let $S$ be the set of all such primes
  together with the archimedean places of $k$. Applying Lemma \ref{N} to $S$
  yields the desired result.
\end{proof}

We will make use also of the following simple lemmas from commutative algebra.

\begin{lemma}\label{gen-close}
  Let $A$ be a complete discrete valuation ring with maximal ideal
  $\fp$ and field of fractions $k$, let $B$ be its integral closure in
  a finite, separable, unramified extension $M$ of $k$, and let $\fm$
  be the maximal ideal of $B$.  Suppose that there is some $\alpha\in B$
  such that $M = k(\alpha)$ and the minimal polynomial
  $g(x) \in A[x]$ of $\alpha$
  has the property that the reduction ${\bar g}(x)$ of $g$ in $A/\fp[x]$
  does not have repeated roots.  Then $B = A[\alpha]$.
\end{lemma}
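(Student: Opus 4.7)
The plan is to reduce the statement to two ingredients: that the reduction $\bar g$ is actually irreducible over $A/\fp$, and then a Nakayama-style argument showing that generating the residue field extension suffices. The overall structure is standard for unramified extensions of complete local rings, but the input hypothesis (no repeated roots of $\bar g$) is slightly weaker than ``$\bar g$ irreducible,'' so a small preliminary argument is needed.

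First I would prove that $\bar g$ is irreducible over $A/\fp$. Since $g$ is the minimal polynomial of $\alpha$, it is irreducible in $A[x]$. If $\bar g$ factored nontrivially, then because $\bar g$ has no repeated roots, its distinct irreducible factors in $(A/\fp)[x]$ would be pairwise coprime. Hensel's lemma, applicable because $A$ is a complete discrete valuation ring, would lift any such coprime factorization of $\bar g$ to a nontrivial factorization of $g$ in $A[x]$, contradicting the irreducibility of $g$. Hence $\bar g$ is irreducible, and being squarefree it is also separable.

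Next, since $M/k$ is unramified, $B$ is a local ring with $\fm = \fp B$ and residue field degree $[B/\fm : A/\fp] = [M:k] = \deg g$. The reduction $\bar\alpha \in B/\fm$ satisfies $\bar g(\bar\alpha) = 0$; since $\bar g$ is irreducible of degree $\deg g$, it is in fact the minimal polynomial of $\bar\alpha$, and so $(A/\fp)[\bar\alpha] = B/\fm$. This shows $B = A[\alpha] + \fm = A[\alpha] + \fp B$. Now $B$ is a finitely generated $A$-module (free of rank $[M:k]$, as $B$ is torsion-free over the DVR $A$), so the quotient $B/A[\alpha]$ is a finitely generated $A$-module satisfying $\fp\cdot(B/A[\alpha]) = B/A[\alpha]$. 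Because $A$ is local with maximal ideal $\fp$, Nakayama's lemma forces $B/A[\alpha] = 0$, giving $B = A[\alpha]$.

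The only genuinely delicate step is the first one, namely upgrading ``$\bar g$ has no repeated roots'' to ``$\bar g$ is irreducible''; everything after that is formal. I would make sure to invoke Hensel's lemma cleanly (in the ``coprime factorization lifts'' form, e.g.\ as in \cite[Theorem~II.4.6]{Neukirch}), and to note explicitly that $\fm = \fp B$ and $[B/\fm:A/\fp] = [M:k]$ follow from the unramified hypothesis. No other subtleties should arise.
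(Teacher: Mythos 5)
Your proof is correct, and both halves are sound, but the second half takes a genuinely different route from the paper. The paper, after establishing (as you do) that $\bar g$ is irreducible so that $A[\alpha]/\fp A[\alpha]$ is a field, shows directly that $A[\alpha]$ is a Noetherian local domain of dimension one whose maximal ideal $\fp A[\alpha]$ is principal; it concludes that $A[\alpha]$ is a discrete valuation ring, hence integrally closed, and hence equal to $B$ since the two share the fraction field $M$. You instead invoke the structure of $B$ as the integral closure of a complete DVR in a finite unramified extension ($B$ local, $\fm=\fp B$, residue degree equal to $[M:k]$), deduce $B = A[\alpha] + \fp B$, and finish with Nakayama applied to the finitely generated $A$-module $B/A[\alpha]$. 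Your route is shorter and leans on the finiteness of $B$ as an $A$-module (true here since $A$ is a DVR and $M/k$ is separable, so $B$ is $A$-free of finite rank); the paper's route avoids that input entirely by proving $A[\alpha]$ is itself integrally closed, which is slightly more self-contained and incidentally shows $A[\alpha]$ is a DVR. Both are standard and both are fine; for the Hensel step, your ``coprime factorization lifts'' formulation and the paper's cited variant (from \cite{BGR}) give the same conclusion that $\bar g$ is irreducible.
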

\begin{proof}
  The ring $A[x]$ is Noetherian by the Hilbert basis theorem, and hence
  so is its quotient $A[\alpha]$. Moreover, $A[\alpha]$ has dimension~one
  because $A$ is integrally closed and of dimension~one, by the going-up
  and going-down theorems.

  By a variant of Hensel's Lemma (see, for example,
  \cite[Proposition~3.3.4.1]{BGR}), the reduced polynomial
  ${\bar g}$ is irreducible in $A/\fp[x]$, because it does not have repeated roots,
  and because the original polynomial $g$ is irreducible over $k$.
  We have
  $A[\alpha] / \fp A[\alpha] \cong (A/\fp)[x] / {\bar g(x)}$, which is
  a field, since ${\bar g}$ is irreducible.
  Therefore $\fp A[\alpha]$ is a maximal ideal in the ring $A[\alpha]$;
  since it is generated by $\fp$, it must be the unique maximal ideal.
  Writing $\fp=\pi A$ for some uniformizer $\pi\in A$, then
  this unique maximal ideal in $A[\alpha]$ is the principal ideal
  $A[\alpha] \pi$.

  As a Noetherian local domain of dimension~one whose maximal ideal is principal,
  $A[\alpha]$ must be a discrete valuation ring and hence integrally closed.
  (See, for example, \cite[Proposition~9.2]{AM}.)
  Since $A[\alpha]\subseteq B$ has the same field of fractions $M$
  as $B$ does, it follows that $A[\alpha]=B$.
\end{proof}

\begin{lemma}\label{comp}
Let $A$ be a discrete valuation ring with maximal ideal $\fp$ and
with field of fractions $k$.
Let $L_1$ and $L_2$ be finite separable extensions of $k$,
both contained in a common algebraic closure of $k$.
For each $i=1,2$, let $R_i$ be the integral closure of $A$ in $L_i$,
and let $\fq_i$ be a maximal ideal of $R_i$.
Let $B$ be the integral closure of $A$ in $L_1 \cdot L_2$,
and let $\fm$ be a maximal ideal of $B$ lying above both $\fq_1$ and $\fq_2$.
Suppose that $\fq_1$  does not ramify over $\fp$
and that $R/\fq_1$ is separable over $A/\fp$.
Then $B/\fm = R_1 / \fq_1 \cdot R_2 / \fq_2$.
\end{lemma}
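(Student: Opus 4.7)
My plan is to reduce to the local complete setting where Lemma~\ref{gen-close} applies directly. Since the residue field $B/\fm$ is preserved by localization at $\fm$ and $\fm$-adic completion, let $\hat A$ denote the $\fp$-adic completion of $A_\fp$, and $\hat R_i$ the $\fq_i$-adic completion of $(R_i)_{\fq_i}$ for $i=1,2$, with fraction fields $\hat L_i$. Each $\hat R_i$ embeds into the completion $\hat B$ of $B_\fm$ as the integral closure of $\hat A$ in $\hat L_i$. The first topological point I would record is that the fraction field of $\hat B$ equals $\hat L_1\hat L_2$: the compositum $\hat L_1\hat L_2$ is itself complete (being a finite extension of the complete field $\hat L_2$) and contains $L_1L_2$, so it coincides with the closure of $L_1L_2$ inside the completion.

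Next I would produce a convenient generator $\alpha$ of the unramified, separable extension $\hat R_1/\hat A$. By the separability hypothesis, the residue field extension has a primitive element $\bar\beta$ whose minimal polynomial $\bar g(x)\in(A/\fp)[x]$ is separable and irreducible. Lifting $\bar g$ to a monic $g(x)\in\hat A[x]$ of the same degree and applying Hensel's lemma to the simple root $\bar\beta$ inside $\hat R_1$ produces a root $\alpha\in\hat R_1$ of $g$ with $\bar\alpha=\bar\beta$. Because $g$ is monic and $\bar g$ is irreducible, $g$ is irreducible over $\hat k$ and is therefore the minimal polynomial of $\alpha$; unramifiedness then forces $[\hat k(\alpha):\hat k]=\deg g=[\hat L_1:\hat k]$, so $\hat k(\alpha)=\hat L_1$, and Lemma~\ref{gen-close} yields $\hat R_1=\hat A[\alpha]$.

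Then I would invoke Lemma~\ref{gen-close} a second time, on the extension $\hat L_2(\alpha)/\hat L_2=\hat L_1\hat L_2/\hat L_2$. The minimal polynomial $g_2(x)\in\hat R_2[x]$ of $\alpha$ over $\hat L_2$ divides $g(x)$, so its reduction $\bar g_2(x)$ divides $\bar g(x)$ in $(R_2/\fq_2)[x]$ and is still separable. The lemma identifies $\hat B$, the integral closure of $\hat R_2$ in $\hat L_1\hat L_2$, with $\hat R_2[\alpha]$. Reducing modulo the unique maximal ideal, using $\fm\hat B\cap\hat R_2=\fq_2\hat R_2$ and $\bar\alpha=\bar\beta$, I obtain
\[
B/\fm \;=\; \hat B/\fm\hat B \;=\; (R_2/\fq_2)[\bar\beta] \;=\; (R_1/\fq_1)\cdot(R_2/\fq_2),
\]
since $R_1/\fq_1=(A/\fp)[\bar\beta]$ by construction.

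The main obstacle I expect is the preparatory setup: confirming that the $\fm$-adic completion of $L_1L_2$ really is $\hat L_1\hat L_2$ (as opposed to a tensor-product with extra factors), and performing the Hensel-lemma construction of $\alpha$ whose minimal polynomial has separable reduction. Once $\alpha$ is in hand, the conclusion follows mechanically from two applications of Lemma~\ref{gen-close} and the residue-field calculation above. It is worth noting that the asymmetry in the hypotheses—only $\fq_1$ is assumed unramified with separable residue extension—is essential, since we use these properties only to produce the generator $\alpha$ on the $L_1$ side.
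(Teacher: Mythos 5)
Your proposal is correct and follows essentially the same route as the paper's proof: pass to the complete local setting, use the unramifiedness and residue separability of $\fq_1$ to produce a generator $\alpha$ of $R_1$ over $A$ whose minimal polynomial has separable reduction, apply Lemma~\ref{gen-close} once to get $R_1=A[\alpha]$ and again (over $R_2$, using that the minimal polynomial of $\alpha$ over $L_2$ divides $g$ and so still has squarefree reduction) to conclude $B=R_2[\alpha]$, then read off the residue fields. The only cosmetic difference is that you manufacture $\alpha$ by lifting a primitive element of the residue field and applying Hensel's lemma, whereas the paper simply picks an arbitrary lift of such a primitive element and argues directly that its minimal polynomial has irreducible separable reduction; both choices serve identically.
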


In the final sentence of Lemma~\ref{comp}, note that the fields
$ R_1 / \fq_1$ and $R_2 / \fq_2$ both embed naturally into $B/\fm$ under the
inclusions of $R_1$ and $R_2$ into $B$. Thus, the conclusion is that
the compositum of these two quotient field is the whole field $B/\fm$.

\begin{proof}
By passing to the completion of $B$ at $\fm$,
we may assume that the rings $A$, $R_1$, $R_2$, and $B$ are complete
with respect to their (unique) maximal ideals.
Choose $\alpha \in R_1$ such that the image of $\alpha$
  in $R_1/\fq_1$ generates $R_1/\fq_1$ over $A/\fp$; such an $\alpha$
  exists by the primitive element theorem,
  since $R_1/\fq_1$ is separable over $A/\fp$. 
  Let $g(x)\in A[x]$ be the minimal polynomial of $\alpha$ over $k$.
  Then ${\bar g}$ must be irreducible over $A/\fp$, because
  $\fq_1$ is unramified over $\fp$, and hence $[L_1:k] = [R_1/\fq_1: A / \fp]$.
  Furthermore, ${\bar g}$ must be separable since $R_1/ \fq_1$ is
  separable over $A/\fp$.
  Thus, $R_1 = A[\alpha]$, by Lemma~\ref{gen-close}.
  
  Let $h\in R_2[x]$ be the minimal polynomial of $\alpha$ over $R_2$.
  Then $h$ divides $g$ in $R_2[x]$, so the reduction ${\bar h}$ of $h$ in $R_2/\fq_2$
  divides ${\bar g}$.  In particular, ${\bar h}$ is also separable.
  Thus, by Lemma~\ref{gen-close} again, we have
  $B = R_2[\alpha]$,
  which is the subring $R_1\cdot R_2$ of $B$ generated by $R_1$ and $R_2$.
  It follows immediately that $B/\fm = R_1 / \fq_1 \cdot R_2 / \fq_2$.
\end{proof}

\begin{prop}\label{kn}
  Let $f$ be a separable rational function defined over a field $k$,
  let $n$ be a positive integer, and let $\alpha \in k$ be any point
  such that $f^n$ does not ramify over $\alpha$.
  (That is, there are no critical points $c$ of $f^n$ for which $f^n(c)=\alpha$.)
  Then $K_{\alpha,n}=k(f^{-n}(\alpha))$ contains $k_n=\kbar\cap K_n$. 
\end{prop}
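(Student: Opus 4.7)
The strategy is to interpret $K_{\alpha,n}$ as the residue field of the specialization of $K_n$ at $t \mapsto \alpha$ and then to observe that $k_n$, being integral over $k$, must map into this residue field. By first conjugating $f$ by a $k$-rational Möbius transformation if necessary, I reduce to the case $\alpha \ne \infty$ and $\infty \notin f^{-n}(\alpha)$; this leaves the fields $K_n$, $K_{\alpha,n}$, and $k_n$ unchanged up to isomorphism. Writing $f^n = P_n/Q_n$ in lowest terms, the unramifiedness hypothesis then gives that $P_n(x) - \alpha Q_n(x) \in k[x]$ is a separable polynomial of degree $d^n$ with root set $f^{-n}(\alpha)$, and that its leading coefficient in $x$ is a unit in the DVR $R := k[t]_{(t-\alpha)}$.

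Let $B$ be the integral closure of $R$ in $K_n$. Each preimage $\beta_i \in K_n$ of $t$ under $f^n$ is a root of $P_n(x) - tQ_n(x)$, which (after dividing by its unit leading coefficient) becomes monic in $R[x]$; hence $\beta_i \in B$. For each $i$, set $M_i = k(t)(\beta_i)$ and let $B_i$ be the integral closure of $R$ in $M_i$. The minimal polynomial $h_i$ of $\beta_i$ divides $P_n(x) - tQ_n(x)$ in $k(t)[x]$ and, by Gauss's lemma, is monic in $R[x]$; its reduction $\bar h_i \in k[x]$ divides the separable polynomial $P_n(x) - \alpha Q_n(x)$ and so is itself separable. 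Lemma~\ref{gen-close} then yields $B_i = R[\beta_i]$, showing that $(t-\alpha)R$ is unramified in each $B_i$ with residue field $k(\bar\beta_i)$, and therefore unramified in their compositum $B$ as well.

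Now fix a maximal ideal $\fm$ of $B$ above $(t-\alpha)R$ and let $\fq_i = \fm \cap B_i$. Specializing $f^n(\beta_i) = t$ yields $f^n(\bar\beta_i) = \alpha$, and separability of $P_n(x) - \alpha Q_n(x)$ forces the $d^n$ elements $\bar\beta_i$ to be pairwise distinct, so $\{\bar\beta_1, \dots, \bar\beta_{d^n}\} = f^{-n}(\alpha)$. Iterating Lemma~\ref{comp} across the compositum $K_n = M_1 \cdots M_{d^n}$ — its hypotheses are satisfied at each step since $(t-\alpha)R$ is unramified in $B_i$ and $k(\bar\beta_i)/k$ is separable — I obtain $B/\fm$ equal to the compositum of the $k(\bar\beta_i)$, which is exactly $k(f^{-n}(\alpha)) = K_{\alpha,n}$.

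Finally, since $k_n$ is algebraic (hence integral) over $k \subseteq R$, we have $k_n \subseteq B$, and the quotient $B \to B/\fm = K_{\alpha,n}$ restricts to an injective $k$-algebra homomorphism $k_n \hookrightarrow K_{\alpha,n}$. Because $K_n/k(t)$ is Galois, every element of $\Gal(K_n/k(t))$ stabilizes $k_n = K_n \cap \kbar$; hence $k_n/k$ is normal, and this ensures that the image of any $k$-embedding of $k_n$ into an algebraic closure of $k$ equals $k_n$ itself, giving the desired inclusion $k_n \subseteq K_{\alpha,n}$. I expect the main obstacle to be the iterated application of Lemma~\ref{comp}: verifying its hypotheses at each step relies on the separability of the residue polynomial, which is precisely the input provided by the unramifiedness of $f^n$ at $\alpha$.
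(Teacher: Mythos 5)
Your proof is correct and takes essentially the same approach as the paper's: interpret $K_{\alpha,n}$ as the residue field $B/\fm$ of a specialization of $K_n$ at $t\mapsto\alpha$, use Lemma~\ref{comp} to identify that residue field with the compositum of the $k(\bar\beta_i)$, and then note that $k_n$ lies in $B$ and hence maps into $B/\fm = K_{\alpha,n}$. You supply more detail than the paper on two points — an explicit verification that the primes $\fq_i$ are unramified via separability of $P_n(x) - \alpha Q_n(x)$ and Lemma~\ref{gen-close}, and the closing remark that normality of $k_n/k$ guarantees the image of $k_n$ in $B/\fm$ is literally $k_n$ — but the underlying strategy and key lemma are identical.
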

\begin{proof}
  For each $\beta_i\in K_n$ such that $f^n(\beta_i) = t$,
  let $L_{i} = k(\beta_i)$; the field $K_n$ is the compositum of these $L_i$.
  Let $A$ be the local ring for the ideal $\fp=(t-\alpha)$ in $k[t]$, let $R_i$ be the
  integral closure of $A$ in $L_i$ for each $i$, let $B$ be the
  integral closure of $A$ in $K_n$, let $\fm$ be a maximal ideal in
  $B$, and let $\fq_i = R_i \cap \fm$ for each $i$.  Note that none of
  the primes $\fq_i$ ramify over $\fp$, since $\beta_i$ is not a critical point of $f^n$.
  The field $K_{\alpha,n}=k(f^{-n}(\alpha))$ contains the
  compositum of the fields $R_i/\fq_i$, which is equal to $B/\fm$ by
  Lemma \ref{comp}.  Since $k_n$ is contained in $B$, we see that
  $B/\fm$ contains $k_n$, so $K_{\alpha,n}$ must also contain
  $k_n$, as desired.
\end{proof}

We will use the following standard lemma from Galois theory throughout
the paper; see \cite[Theorem~VI.1.12]{Lang}. We include a proof for completeness.  

\begin{lemma}
\label{isomgal}
Let $K$ and $L$ be separable field extensions of a field $F$,
contained in the same algebraic closure of $F$.  Suppose that $K$ is
normal over $F$.  Then the natural restriction map $r: \Gal(K \cdot L / L)\lra 
\Gal(K/K \cap L)$ is an isomorphism.    
\end{lemma}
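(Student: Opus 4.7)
The plan is to prove the lemma by the classical restriction argument: show well-definedness, then injectivity (which is easy), and finally surjectivity via the Galois correspondence applied to $K/K\cap L$.

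First I would establish the relevant extensions are Galois. Since $K/F$ is separable and normal, it is Galois. Since $L/F$ is separable and $K/F$ is normal, the compositum $K\cdot L$ is the splitting field over $L$ of a family of separable polynomials over $F$ (namely, those whose roots generate $K$ over $F$), so $K\cdot L / L$ is Galois. Likewise, $K\cap L$ contains $F$, and separability and normality descend, so $K/K\cap L$ is Galois. Any $\sigma\in\Gal(K\cdot L/L)$ restricts to an $F$-automorphism of $K$ (normality of $K/F$ gives $\sigma(K)=K$), and it fixes $K\cap L$ because it fixes $L$; hence the restriction map $r$ lands in $\Gal(K/K\cap L)$ as claimed.

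For injectivity, I would note that if $\sigma\in\ker r$, then $\sigma$ fixes $K$ pointwise and also fixes $L$ pointwise by definition of $\Gal(K\cdot L/L)$; since $K\cdot L$ is generated by $K$ and $L$, it follows that $\sigma$ is the identity on $K\cdot L$. The main content, and the step I expect to be the real obstacle, is surjectivity. I would handle it by identifying the fixed field of the image $H:=r\bigl(\Gal(K\cdot L/L)\bigr)$ inside $K$. An element $x\in K$ lies in $K^H$ iff it is fixed by every restriction $\sigma|_K$ with $\sigma\in\Gal(K\cdot L/L)$, which is equivalent to saying $x$, viewed inside $K\cdot L$, is fixed by every element of $\Gal(K\cdot L/L)$. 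Since $K\cdot L/L$ is Galois, the fixed field of $\Gal(K\cdot L/L)$ in $K\cdot L$ is exactly $L$, so
\[
K^H \;=\; K\cap (K\cdot L)^{\Gal(K\cdot L/L)} \;=\; K\cap L.
\]

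Having computed $K^H=K\cap L$, the Galois correspondence for the Galois extension $K/K\cap L$ then forces $H=\Gal(K/K^H)=\Gal(K/K\cap L)$, which is exactly the surjectivity we need. Combining the three steps shows that $r$ is a bijective group homomorphism, hence an isomorphism. The delicate point throughout is that all the arguments rely on $K\cdot L/L$ actually being Galois, and that one can freely invoke the Galois correspondence for $K/K\cap L$; as long as one is careful to verify these hypotheses at the outset (which is why I would start the argument by checking the three Galois-ness claims), the remainder is a short identification-of-fixed-fields computation.
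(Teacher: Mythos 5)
Your proof is correct and follows essentially the same route as the paper's: show the restriction map is a well-defined injective homomorphism, then identify the fixed field of the image inside $K$ as $K\cap L$ and invoke the Galois correspondence for $K/K\cap L$ to get surjectivity. The only cosmetic difference is that the paper first reduces to the case $K\cap L=F$ before running the fixed-field computation, whereas you carry $K\cap L$ along explicitly; this changes nothing substantive.
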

\begin{proof}
It suffices to prove the statement when $K \cap L = F$.
Clearly $r$ is a homomorphism. Any $\sigma\in\ker(r)$ acts trivially on
both $L$ and $K$ and is thus $1_{KL}$.

Let $H\subseteq\Gal(K/F)$ be the image of $r$.
We claim that the fixed field $K^H$ is $F$.
Clearly any $\gamma\in F$ is fixed by every $\sigma\in H\subseteq\Gal(K/F)$.
Conversely, any $\gamma\in K^H$ is fixed by every
$\sigma\in\Gal(K\cdot L /L)$ and hence lies in $L$.
Therefore, $\gamma\in K\cap L = F$, proving our claim.
By the Galois correspondence, it follows that $H=\Gal(K/F)$,
and hence that $r$ is surjective.
\end{proof}

\begin{prop}\label{any}
Let $\ell$ an algebraic extension of a field $k$.
If the embedding induced by specializing $t$ to $\alpha$ gives an isomorphism
\begin{equation}\label{gg}
  \Gal(\ell \cdot K_{\alpha, \infty}/ \ell) \cong \Gal(\ell \cdot K_\infty / \ell(t)),
\end{equation}
then $G_{\alpha, \infty} \cong G_\infty$.  
\end{prop}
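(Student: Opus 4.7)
The plan is to realize the hypothesized abstract isomorphism as an equality of subgroups inside $G_\infty$ and then conclude via an index identity. Applying Lemma~\ref{isomgal}, identify $H := \Gal(\ell K_\infty/\ell(t))$ with $\Gal(K_\infty/(K_\infty \cap \ell(t))) \leq G_\infty$, and $H_\alpha := \Gal(\ell K_{\alpha,\infty}/\ell)$ with $\Gal(K_{\alpha,\infty}/(K_{\alpha,\infty} \cap \ell)) \leq G_{\alpha,\infty}$. A routine Galois-theoretic computation (argued level by level, passing to the Galois closure of $\ell$ over $k$ if needed) gives the field identity $K_\infty \cap \ell(t) = (k_\infty \cap \ell)(t)$; hence
$$[G_\infty : H] = [k_\infty \cap \ell : k] \quad\text{and}\quad [G_{\alpha,\infty} : H_\alpha] = [K_{\alpha,\infty} \cap \ell : k].$$

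The key step is to show that under the decomposition-group embedding $G_{\alpha,\infty} \hookrightarrow G_\infty$, the subgroup $H_\alpha$ lands inside $H$. Proposition~\ref{kn}, applied at every finite level (using that the existence of the embedding in the hypothesis forces $\alpha$ to be not strictly post-critical, hence $f^n$ is unramified over $\alpha$ for each $n$), yields $k_n \subseteq K_{\alpha,n}$ for all $n$ and hence $k_\infty \subseteq K_{\alpha,\infty}$. With compatible choices of primes over $(t-\alpha)$, any $\sigma \in H_\alpha$ fixes $K_{\alpha,\infty} \cap \ell \supseteq k_\infty \cap \ell$ on the residue field, so its image in $G_\infty$ fixes $(k_\infty \cap \ell)(t) = K_\infty \cap \ell(t)$, i.e.\ lies in $H$. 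Combining this inclusion $H_\alpha \leq H$ with the cardinality equality $|H_\alpha| = |H|$ provided by the hypothesis gives $H_\alpha = H$ as subgroups of $G_\infty$; in particular, $H \leq G_{\alpha,\infty} \leq G_\infty$.

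Multiplicativity of index in this tower now yields
$$[G_\infty:G_{\alpha,\infty}] \cdot [K_{\alpha,\infty} \cap \ell : k] = [G_\infty:H] = [k_\infty \cap \ell : k].$$
Since $k_\infty \cap \ell \subseteq K_{\alpha,\infty} \cap \ell$ already makes $[k_\infty \cap \ell : k]$ a divisor of $[K_{\alpha,\infty} \cap \ell : k]$ (in the supernatural sense), this equation forces $[G_\infty:G_{\alpha,\infty}] = 1$, proving $G_{\alpha,\infty} = G_\infty$. The main obstacle is the middle step: translating the abstract hypothesized isomorphism into the concrete containment $H_\alpha \subseteq H$ inside $G_\infty$, which rests on Proposition~\ref{kn} (for the containment $k_\infty \subseteq K_{\alpha,\infty}$) and on a coherent choice of decomposition-group embeddings at the two levels.
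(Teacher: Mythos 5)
Your approach is structurally different from the paper's: you realize the hypothesis as an equality $H_\alpha = H$ of explicit subgroups of $G_\infty$ and then invoke index multiplicativity, whereas the paper avoids infinite indices entirely and compares the finite orders $|G_{\alpha,n}|$ and $|G_n|$ directly at each level $n$. Your identifications via Lemma~\ref{isomgal} and the field identity $K_\infty \cap \ell(t) = (k_\infty\cap\ell)(t)$ (the paper's~\eqref{eq:ellkn}), the use of Proposition~\ref{kn} to get $k_\infty\subseteq K_{\alpha,\infty}$, and the containment $H_\alpha\le H$ via the decomposition embedding are all sound.

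The gap is in the last step, and it is a genuine one. You reduce to the supernatural-number identity
$[G_\infty:G_{\alpha,\infty}]\cdot[K_{\alpha,\infty}\cap\ell:k]=[k_\infty\cap\ell:k]$
together with $[k_\infty\cap\ell:k]\mid[K_{\alpha,\infty}\cap\ell:k]$, and conclude $[G_\infty:G_{\alpha,\infty}]=1$. But supernatural numbers do not admit cancellation: if $[k_\infty\cap\ell:k]$ has an infinite exponent at some prime $p$ (as happens, for instance, with $\ell=k_\infty$ in Theorem~\ref{geo-does}, where $k_\infty/k$ can be an infinite $2$-extension), then $a\cdot b=c$ with $c\mid b$ imposes no constraint at all on the $p$-part of $a$. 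The same caveat applies to your intermediate step ``inclusion $H_\alpha\le H$ plus $|H_\alpha|=|H|$ gives $H_\alpha=H$'': equality of supernatural orders does not force equality of closed subgroups (e.g.\ $p\ZZ_p\subsetneq\ZZ_p$ both have order $p^\infty$); you would need to invoke surjectivity of the hypothesized embedding directly rather than a cardinality count. Both issues disappear if you carry out the whole argument at a fixed finite level $n$ — observe that surjectivity of the embedding at the infinite level forces surjectivity at each finite level, giving the image $(H_\alpha)_n=H_n$ inside the finite group $G_n$, so that the index identity $[G_n:G_{\alpha,n}]\cdot[K_{\alpha,n}\cap\ell:k]=[k_n\cap\ell:k]$ with $[k_n\cap\ell:k]\le[K_{\alpha,n}\cap\ell:k]$ now involves honest natural numbers and forces $G_{\alpha,n}=G_n$. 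This level-by-level reduction is precisely what the paper's proof does via its order computations~\eqref{eq:Gansize} and~\eqref{eq:Gnsize}.
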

\begin{proof}
If $\alpha$ were post-critical (i.e., if $\alpha=f^n(c)$ for some critical point $c$ of $f$
and some $n\geq 1$), then isomorphism~\eqref{gg} would fail.
After all, in that case,
the Galois group on the left would have to act in exactly the same way on
the two or more copies of $T^d_\infty$ rooted at the multiple copies of $c$
inside the main tree $T^d_\infty$ rooted at $\alpha$.
Thus, $\alpha$ must not be post-critical.
Therefore, by Proposition \ref{kn}, we must have
\begin{equation}
\label{eq:ellinclude}
\ell\cap k_n \subseteq \ell \cap K_{\alpha, n}
\quad \text{for each } n\ge 1 .
\end{equation}

By Lemma~\ref{isomgal}, we have
$\Gal ( \ell \cdot K_{\alpha, n}/ \ell)\cong \Gal(K_{\alpha,n} /\ell\cap K_{\alpha,n})$,
and hence
\begin{align}
\label{eq:Gansize}
|G_{\alpha,n}| &= [K_{\alpha,n}:k]
= [K_{\alpha,n}:\ell\cap K_{\alpha,n}] \cdot [\ell\cap K_{\alpha,n}:k] \notag \\
&= |\Gal( \ell \cdot K_{\alpha, n}/ \ell)|\cdot [\ell \cap K_{\alpha, n} : k] .
\end{align}

Meanwhile, we have $[ (\ell\cap k_n)(t) : k(t) ] = [\ell \cap k_n : k]$, since $t$
is transcendental over $k$.
We also have
\begin{equation}
\label{eq:ellkn}
(\ell\cap k_n)(t) = (\ell \cap K_n)(t) = \big( \ell(t) \big) \cap \big( K_n(t) \big)
= \ell(t) \cap K_n .
\end{equation}
By Lemma~\ref{isomgal} again, we have
$\Gal ( \ell \cdot K_{n}/ \ell(t) )\cong \Gal(K_{n} /\ell(t) \cap K_{n})$, and hence
\begin{align}
\label{eq:Gnsize}
|G_n| &= [K_n:k(t)] = [K_n : \ell(t) \cap K_n] [(\ell \cap k_n)(t) : k(t)] \notag \\
& =\big|\Gal \big( \ell \cdot K_{n}/ \ell(t) \big) \big| \cdot [(\ell \cap k_n)(t) : k(t)]
=  |\Gal( \ell \cdot K_{\alpha, n}/ \ell)|\cdot [\ell \cap k_n : k],
\end{align}
where the second equality is by equation~\eqref{eq:ellkn},
and the fourth is by hypothesis~\eqref{gg}.

Combining equations~\eqref{eq:Gansize} and~\eqref{eq:Gnsize}
with the fact that $\ell\cap k_n \subseteq \ell \cap K_{\alpha,n}$
from inclusion~\eqref{eq:ellinclude}, it follows that $|G_{\alpha,n}| \geq |G_n|$.
However, we also have $|G_{\alpha,n}| \leq |G_n|$ by construction,
whence $|G_{\alpha,n}| = |G_n|$ for all $n\ge 1$.
Therefore $G_{\alpha, \infty} \cong G_\infty$, as desired. 
\end{proof}

The converse of Proposition \ref{any} is false in general
(take $\ell = \kbar$, for example), but as our next lemma shows,
it does hold when $\ell \subseteq k_\infty$.

\begin{lemma}\label{con}
Let $\ell$ be an algebraic extension of $k$ contained in
$k_\infty$. If $G_{\alpha, \infty} \cong G_\infty$, then
\begin{equation}\label{ggg}
  \Gal(\ell \cdot K_{\alpha, \infty}/ \ell) \cong \Gal(\ell \cdot K_\infty /  \ell(t)).
\end{equation}
 \end{lemma}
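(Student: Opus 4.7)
The plan is to use the hypothesis $\ell\subseteq k_\infty$ to collapse both composita in \eqref{ggg} onto the original fields $K_{\alpha,\infty}$ and $K_\infty$, after which the assumed isomorphism $G_{\alpha,\infty}\cong G_\infty$ restricts directly to the two stabilizers of $\ell$.

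First I would note that because the isomorphism $G_{\alpha,\infty}\cong G_\infty$ is implicitly the natural decomposition-group embedding (otherwise it would not be well defined), the point $\alpha$ is not post-critical, and Proposition~\ref{kn} then gives $k_n\subseteq K_{\alpha,n}$ for each $n\ge 1$. Taking the union yields $\ell\subseteq k_\infty\subseteq K_{\alpha,\infty}$, and of course also $\ell\subseteq k_\infty\subseteq K_\infty$. Hence $\ell\cdot K_{\alpha,\infty}=K_{\alpha,\infty}$ and $\ell\cdot K_\infty=K_\infty$, so the two Galois groups in \eqref{ggg} reduce to
\[
\Gal(K_{\alpha,\infty}/\ell)\leq G_{\alpha,\infty}\quad\text{and}\quad \Gal(K_\infty/\ell(t))\leq G_\infty,
\]
the latter being just the pointwise stabilizer of $\ell$ in $G_\infty$ (since $t$ is fixed automatically).

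Next I would verify that the embedding $G_{\alpha,\infty}\hookrightarrow G_\infty$ carries the first of these subgroups onto the second. The embedding is the inverse of the reduction map $D\twoheadrightarrow G_{\alpha,\infty}$ from the decomposition group $D\subseteq G_\infty$ of a prime above $(t-\alpha)$. Elements of $k_\infty=\kbar\cap K_\infty$, being algebraic over $k$, are fixed by this specialization, so the reduction map is the identity on $\ell\subseteq k_\infty$; explicitly, for $\tilde\sigma\in D$ and $x\in\ell$, both $x$ and $\tilde\sigma(x)$ lie in $k_\infty$, so $\sigma(x)=\tilde\sigma(x)$ where $\sigma$ is the reduction of $\tilde\sigma$. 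Consequently $\tilde\sigma\in D$ fixes $\ell\subseteq K_\infty$ pointwise if and only if $\sigma\in G_{\alpha,\infty}$ fixes $\ell\subseteq K_{\alpha,\infty}$ pointwise. The hypothesized global isomorphism $G_{\alpha,\infty}\cong G_\infty$ therefore restricts to an isomorphism between the two displayed subgroups, which is \eqref{ggg}.

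The main point, and the only place the hypothesis $\ell\subseteq k_\infty$ enters, is in guaranteeing both that $\ell$ lies inside $K_\infty$ and $K_{\alpha,\infty}$ (so that the composita collapse) and that the decomposition-reduction correspondence is the identity on $\ell$. Without $\ell\subseteq k_\infty$, as the remark preceding the lemma observes for $\ell=\kbar$, both features can fail, which is exactly why the converse of Proposition~\ref{any} requires this containment.
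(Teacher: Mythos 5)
Your proof is correct, but it takes a genuinely different route from the paper's. The paper argues by contradiction: supposing $|\Gal(\ell\cdot K_{\alpha,m}/\ell)|<|\Gal(\ell\cdot K_m/\ell(t))|$ at some level $m$, it extracts the field $\ell'=\ell\cap K_{\alpha,m}\subseteq k_\infty$ (here is where $\ell\subseteq k_\infty$ is used), observes that $\ell'\cdot K_m$ is then a nontrivial subextension of $K_\infty$ over $K_m$, and derives a degree contradiction by computing the residue field at $(t-\alpha)$ via Lemma~\ref{comp}. You instead argue directly: $\ell\subseteq k_\infty$ plus Proposition~\ref{kn} gives $\ell\subseteq K_{\alpha,\infty}$ and $\ell\subseteq K_\infty$, so both composita in~\eqref{ggg} collapse, and the isomorphism reduces to the observation that the decomposition-group isomorphism $G_\infty\to G_{\alpha,\infty}$ carries the stabilizer of $\ell$ onto the stabilizer of $\ell$, because reduction mod the decomposition prime is injective on $k_\infty$ and commutes with the Galois action. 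Your route avoids Lemma~\ref{comp} and the degree bookkeeping entirely and is arguably more transparent; what it requires instead is a careful handle on the reduction map's behavior on $k_\infty$.

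One small imprecision worth tightening: you write that the reduction map "is the identity on $\ell$" and that $\sigma(x)=\tilde\sigma(x)$ for $x\in\ell$. Strictly speaking $\sigma(x)$ and $\tilde\sigma(x)$ live in different fields, and the reduction $k_\infty\hookrightarrow K_{\alpha,\infty}$, while a $k$-embedding with image $k_\infty$ (since $k_\infty/k$ is normal), need not literally be the identity on $k_\infty$—it is some automorphism $\tau$ of $k_\infty$ over $k$. The correct statement, which still suffices, is that $\tilde\sigma$ fixes $x$ if and only if $\sigma$ fixes $\tau(x)$, by injectivity of reduction and the defining compatibility $\sigma(\bar y)=\overline{\tilde\sigma(y)}$. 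This identifies $\Gal(K_\infty/\ell(t))$ with $\Gal(K_{\alpha,\infty}/\tau(\ell))$, which is conjugate to (hence isomorphic to) $\Gal(K_{\alpha,\infty}/\ell)$; alternatively one can choose the decomposition prime so that $\tau=\mathrm{id}$. Either way your conclusion~\eqref{ggg} follows.
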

 \begin{proof}
 As in the proof of Proposition~\ref{any}, we may assume that
 $\alpha$ is not post-critical, so that Proposition~\ref{kn} applies.
 Suppose that
 $|\Gal(\ell \cdot K_{\alpha, m}/ \ell)| < |\Gal(\ell \cdot K_m / \ell(t))|$
 for some $m\ge 1$.
 Then equation~\eqref{eq:Gansize} from the proof of Proposition~\ref{any}
 still holds, but the last equality in equation~\eqref{eq:Gnsize} becomes a strict inequality.
 By hypothesis, we have $G_{\alpha,m}\cong G_m$, and thus
 it follows that $[\ell \cap K_{\alpha,m}:k] > [\ell  \cap K_m: k]$.

 Let $\ell' = \ell \cap K_{\alpha,m}$, which is a subfield of $k_{\infty}$ by hypothesis.
 Then $\ell' \cdot K_m$ is contained in $K_{\infty}$; moreover, by the inequality
 at the end of the previous paragraph, it is a nontrivial extension of $K_m$.

 Let $A\subseteq k[t]$ be the local ring for $\fp=(t-\alpha)$,
 and let $B$ be the integral closure of $A$ in $\ell'\cdot K_m$.
 Because $G_{\alpha, m}$ is the decomposition group of $\fp$ in $K_m$
 but is equal to all of $G_m$ by hypothesis,
 and because $\ell'\cdot K_m$ is a base field extension of $K_m$,
 it follows that there is only a single prime $\fm$ of $B$ above $\fp$.
 Moreover, because we assumed $\alpha$ is not post-critical,
 the extension $K_m/k(t)$ is unramified at $\fp$, and hence
\begin{equation}
\label{eq:D1}
[k_\fm: k] = [\ell' \cdot K_m: k(t)], 
\end{equation}
where $k_\fm$ is the residue field $k_\fm = B/\fm$.

Setting $L_1=\ell'(t)$ and $L_2=K_m$,
both of which are finite separable extensions of $k(t)$,
and letting $R_i$ be the integral closure of $A$
in $L_i$ and $\fq_i=\fm\cap R_i$ for $i=1,2$,
Lemma \ref{comp} yields
\begin{equation}  
\label{eq:D2}
k_\fm = B/\fm = R_1/\fq_1 \cdot R_2/\fq_2 \cong
\ell' \cdot K_{\alpha,m} = K_{\alpha,m}.  
\end{equation}
But since $\ell'\cdot K_m$ is a nontrivial extension of $K_m$, we have
\[ [K_{\alpha,m}: k] = |G_{\alpha,m}| = |G_m| = [K_m: k(t)] <  [\ell' \cdot K_m: k(t)], \]
which  contradicts \eqref{eq:D1}~and~\eqref{eq:D2}.  So we must have
\[|\Gal(\ell \cdot K_{\alpha,m}/ \ell)| = |\Gal(\ell \cdot K_m / \ell(t))| 
\quad \text{for all }m\ge 1, \]
which implies that \eqref{ggg} holds.
\end{proof}
 
The following is an immediate consequence of Proposition~\ref{any}
and Lemma~\ref{con}.  
 
\begin{thm}\label{geo-does}

  Let $\ell$ be any extension of $k$ contained in $k_\infty$.   Then
  \[ \Gal( K_{\alpha, \infty}/ \ell) = \Gal(K_\infty /
    \ell(t))\]
  if and only if $G_{\alpha, \infty} = G_\infty$.  
\end{thm}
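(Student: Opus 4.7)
The plan is to deduce this statement as an immediate corollary by combining Proposition~\ref{any} and Lemma~\ref{con}, once the Galois groups appearing in the theorem are rewritten in the form used in those two results. As in the proofs of those results, I would first dispense with the case that $\alpha$ is strictly post-critical: the argument at the beginning of the proof of Proposition~\ref{any} already shows that the natural map between the relevant Galois groups cannot be an isomorphism in this case, and the natural embedding $G_{\alpha,\infty} \hookrightarrow G_\infty$ is not even well defined there, so the ``if and only if'' is vacuous.

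Assuming henceforth that $\alpha$ is not strictly post-critical, the crux is the identification $\ell\cdot K_{\alpha,\infty} = K_{\alpha,\infty}$ and $\ell\cdot K_\infty = K_\infty$. For the first, Proposition~\ref{kn} applied at each finite level gives $k_n \subseteq K_{\alpha,n}$ for every $n\geq 1$, and hence $\ell \subseteq k_\infty \subseteq K_{\alpha,\infty}$. The second is immediate from $\ell \subseteq k_\infty \subseteq K_\infty$. Consequently the groups $\Gal(K_{\alpha,\infty}/\ell)$ and $\Gal(K_\infty/\ell(t))$ appearing in the theorem coincide with the groups $\Gal(\ell \cdot K_{\alpha,\infty}/\ell)$ and $\Gal(\ell \cdot K_\infty/\ell(t))$ that appear in Proposition~\ref{any} and Lemma~\ref{con}.

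With that identification in hand, the forward direction of the theorem is precisely the content of Proposition~\ref{any}, which holds for any algebraic $\ell$, while the reverse direction is precisely the content of Lemma~\ref{con}, whose hypothesis $\ell \subseteq k_\infty$ is assumed here. The proof is therefore a short bookkeeping exercise; the only subtlety, such as it is, is the verification of the compositum identifications in the previous paragraph, and that is exactly what the hypothesis $\ell \subseteq k_\infty$ together with Proposition~\ref{kn} is designed to provide.
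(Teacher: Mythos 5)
Your proposal is correct and matches the paper's own reasoning: the paper indeed presents Theorem~\ref{geo-does} as an immediate consequence of Proposition~\ref{any} and Lemma~\ref{con}, and you have simply made explicit the small bookkeeping step (via Proposition~\ref{kn} and $\ell\subseteq k_\infty\subseteq K_\infty$) that identifies $\Gal(K_{\alpha,\infty}/\ell)$ with $\Gal(\ell\cdot K_{\alpha,\infty}/\ell)$ and $\Gal(K_\infty/\ell(t))$ with $\Gal(\ell\cdot K_\infty/\ell(t))$, which the paper leaves implicit.
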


\section{Proof of Theorems~\ref{hilbert-plus} and~\ref{more-general}}\label{more}

\subsection{Additional technical results}
We need several more useful facts in order to prove
Theorem~\ref{more-general}, beginning with a brief discussion of Frattini subgroups.

\begin{defin}
The \textit{Frattini subgroup} of a profinite group $G$ is the intersection of
all closed maximal subgroups of $G$.  
\end{defin}

The following is well-known.  We provide a short proof for
completeness. 
\begin{lemma}\label{easy}
Let $G$ be a profinite group, let $H$ be a closed subgroup of
$G$, and let $F$ be the Frattini subgroup of $G$.  If $H$ intersects
every coset of $F$ in $G$ nontrivially, then $H = G$.
\end{lemma}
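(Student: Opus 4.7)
The strategy will be a contrapositive argument: I will assume that $H$ is a proper closed subgroup of $G$ and produce a coset of $F$ disjoint from $H$. The key reformulation of the hypothesis is that ``$H$ meets every coset of $F$'' is equivalent to the set-theoretic equality $HF = G$, since $H \cap gF \neq \emptyset$ if and only if $g \in HF$. Thus the goal reduces to: if $H$ is a proper closed subgroup, then $HF \neq G$.

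The main step will be to show that any proper closed subgroup $H$ of a profinite group $G$ is contained in some maximal proper (open, hence closed) subgroup $M$. For this I would use the standard profinite fact that $H = \bigcap_N HN$, where $N$ runs over the open normal subgroups of $G$. Since $H \neq G$, there must exist at least one such $N$ for which $HN$ is a proper open subgroup of $G$. Then in the finite quotient $G/N$, I would pick any maximal subgroup containing the proper subgroup $HN/N$ and pull it back to obtain a maximal closed subgroup $M \subseteq G$ with $H \subseteq M$.

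Once such $M$ is in hand, the conclusion is immediate: by the very definition of the Frattini subgroup, $F$ is contained in every maximal closed subgroup of $G$, so $F \subseteq M$. Hence $HF \subseteq M \subsetneq G$, contradicting the equivalent reformulation $HF = G$ of the hypothesis. So $H$ cannot be proper, and $H = G$ as claimed.

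The only nontrivial ingredient is the existence of the maximal subgroup $M$ containing $H$; this uses the profinite structure essentially (the analogous statement fails for arbitrary topological groups, and even for discrete groups without some finiteness assumption). I expect this to be the main conceptual step, while the deduction from $F \subseteq M$ is purely formal.
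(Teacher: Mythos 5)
Your proof is correct and takes essentially the same approach as the paper's: pass from the Frattini hypothesis to the fact that $H$ cannot lie inside any closed maximal subgroup, and then use the existence of a maximal closed subgroup above any proper closed subgroup to force $H=G$. The only difference is that you flesh out the profinite-specific step (writing $H=\bigcap_N HN$ over open normal $N$ and passing to a finite quotient to locate a maximal open subgroup above $H$), which the paper's proof leaves implicit; this is a useful addition, since that step is genuinely where the profinite structure enters.
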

\begin{proof}
  Since $H$ intersects every coset of $F$ in $G$ nontrivially, it must
  intersect every coset of any closed maximal subgroup $M$ in $G$
  nontrivially as well.  This means that $H$ is not contained in any
  closed maximal subgroup of $G$, which means that $H$ is all of $G$. 
\end{proof}

\begin{lemma}\label{1}
Let $F$ be the Frattini subgroup of $G_\infty$.  Then the following
are equivalent:
\begin{itemize}
 
\item $G_\infty$ has only finitely many closed maximal subgroups;
  \item $K_\infty^F$ (the fixed field of $F$) is a finite extension of $k(t)$.
  \end{itemize}
\end{lemma}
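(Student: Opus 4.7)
The plan is to translate both conditions through the infinite Galois correspondence between closed subgroups of $G_\infty$ and intermediate fields of $K_\infty/k(t)$. The key input I will need is the standard fact that any closed maximal subgroup of a profinite group is automatically open. Briefly: for a closed maximal $M\subseteq G_\infty$, either every open normal subgroup of $G_\infty$ is contained in $M$, in which case $M$ is the preimage in $G_\infty$ of a subgroup of a finite quotient $G_\infty/N$ and hence open; or some open normal $N$ is not contained in $M$, in which case $MN$ is an open subgroup properly containing $M$, forcing $MN=G_\infty$ by maximality, so that $M$ has finite index and is therefore open. Consequently, each closed maximal subgroup $M$ corresponds to a \emph{finite} intermediate field $K_\infty^M$ of $K_\infty/k(t)$, and the Frattini subgroup $F$, being characteristic in $G_\infty$, is in particular normal.

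For the direction from $K_\infty^F/k(t)$ finite to finitely many closed maximal subgroups, I would observe that finiteness of $K_\infty^F/k(t)$ means $F$ has finite index in $G_\infty$, so the quotient $G_\infty/F$ is a finite group and $K_\infty^F/k(t)$ is a finite Galois extension with Galois group $G_\infty/F$. Every closed maximal subgroup of $G_\infty$ contains $F$ by the definition of the Frattini subgroup, and therefore corresponds under the Galois correspondence to a maximal proper subgroup of the finite group $G_\infty/F$; a finite group has only finitely many such subgroups, which finishes this direction.

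For the reverse direction, if $M_1,\ldots,M_r$ enumerate all closed maximal subgroups of $G_\infty$, then $F=M_1\cap\cdots\cap M_r$ is a finite intersection of open subgroups (using the openness fact above), and is therefore itself open in $G_\infty$. Hence $[G_\infty:F]<\infty$, which by the Galois correspondence translates into $[K_\infty^F:k(t)]<\infty$, as claimed.

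The only real subtlety I anticipate is invoking the openness of closed maximal subgroups in a profinite group; beyond that well-known input, the lemma is a direct application of infinite Galois theory together with the definition of the Frattini subgroup. In particular, no properties specific to the PCF setting, or to $G_\infty$ being a pro-$p$ group, enter into the argument here.
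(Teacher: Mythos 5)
Your overall argument is structurally the same as the paper's: both directions pass through the Galois correspondence together with the fact that closed maximal subgroups of a profinite group have finite index, and the two implications themselves are handled essentially as you describe. The paper invokes the finite-index fact without comment (writing $[G_\infty:H_i]<\infty$ ``since $H_i$ is a maximal subgroup''); you attempt to justify it inline, but your second case is a non sequitur. From ``some open normal $N$ is not contained in $M$, hence $MN=G_\infty$ by maximality,'' you infer that $M$ has finite index. That implication fails in general: with $G=\mathbb{Z}_p\times\mathbb{Z}_p$, $M=\mathbb{Z}_p\times\{0\}$, and $N=p\mathbb{Z}_p\times\mathbb{Z}_p$, one has $MN=G$ with $N$ open normal, yet $[G:M]$ is infinite.

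The correct route is to show that your second case in fact never occurs for a maximal $M$. For any proper closed subgroup $M$ of a profinite group, a compactness argument gives $\bigcap_N MN=M$ as $N$ ranges over open normal subgroups, so $MN\neq G_\infty$ for some such $N$. Since that $MN$ is a closed subgroup with $M\subseteq MN\subsetneq G_\infty$, maximality forces $MN=M$, i.e., $N\subseteq M$, and therefore $M$ is open. Once this fact is in hand --- or simply cited as standard, as the paper implicitly does --- the remainder of your proof is correct and coincides with the paper's, merely phrased in terms of subgroups of $G_\infty$ rather than the corresponding intermediate fields.
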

\begin{proof}
For the forward implication, denote the finitely many closed maximal subgroups
of $G_\infty$ as $H_1, \dots, H_n$.
Then $F = \bigcap_{i=1}^n H_i$, and hence 
$K_\infty^F = K_\infty^{H_1} \cdots K_\infty^{H_n}$.
We have $[K_{\infty}^{H_i} : k(t)] = [G_{\infty}:H_i] < \infty$ for each $i$,
since $H_i$ is a maximal subgroup.
Hence, $K_\infty^F$ is also a finite extension of $k(t)$.

Conversely, if $[K_\infty^F:k(t)] < \infty$, then
$H=\Gal(K_\infty^F/k(t))$ is a finite group, and hence it contains only finitely
many subgroups. These subgroups are in one-to-one
correspondence with the closed subgroups of $G_\infty$ containing $F$.
Since $F$ is contained in every closed maximal subgroup of $G_{\infty}$,
it follows that $G_\infty$ has only finitely many closed maximal subgroups.
\end{proof}

\begin{lemma}\label{H}
Let $L$ be a Galois extension of $k(t)$ contained in $K_{\infty}$
and containing $K_\infty^F$.  If $H$ is a closed
subgroup of $G_\infty$ such that the restriction of $H$ to $L$ is all of
$\Gal(L/k(t))$, then $H = G_\infty$.
\end{lemma}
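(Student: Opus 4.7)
The plan is a direct application of Lemma~\ref{easy}: the goal is to show that the closed subgroup $H$ meets every coset $gF$ of the Frattini subgroup $F$ in $G_\infty$, which then forces $H = G_\infty$.

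The key observation is that the hypothesis $K_\infty^F \subseteq L$ translates, via the Galois correspondence, into the inclusion $\Gal(K_\infty/L) \subseteq \Gal(K_\infty/K_\infty^F) = F$. Thus any element of $G_\infty$ that fixes $L$ pointwise already lies in $F$. This is essentially the only structural fact I need to extract from the hypothesis on $L$.

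Given an arbitrary $g \in G_\infty$, I then exploit the assumption that the restriction map $H \to \Gal(L/k(t))$ is surjective (which is well-defined since $L/k(t)$ is Galois and $H$ is closed) to produce some $h \in H$ with $h|_L = g|_L$. Then $g^{-1}h$ acts trivially on $L$, so $g^{-1}h \in \Gal(K_\infty/L) \subseteq F$; in other words, $h \in gF \cap H$. Hence $H$ meets every coset of $F$ in $G_\infty$, and Lemma~\ref{easy} immediately gives $H = G_\infty$.

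I do not anticipate a genuine obstacle here: the argument is little more than a bookkeeping exercise once one notices that containing $K_\infty^F$ is exactly the condition that packages the hypothesis on $L$ into information about cosets of $F$. The only point worth verifying is the continuity/well-definedness of the restriction $H \to \Gal(L/k(t))$, but this is automatic from $L$ being Galois over $k(t)$ and $H$ being closed in $G_\infty$.
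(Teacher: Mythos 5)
Your proof is correct and follows essentially the same route as the paper's: both reduce the claim to showing that $H$ meets every coset of the Frattini subgroup $F$, then invoke Lemma~\ref{easy}. The paper phrases this as surjectivity of the composite restriction $H \to \Gal(L/k(t)) \to \Gal(K_\infty^F/k(t)) \cong G_\infty/F$, whereas you unwind that surjectivity coset-by-coset via the inclusion $\Gal(K_\infty/L) \subseteq F$; the two formulations are interchangeable.
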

\begin{proof}
By hypothesis, the homomorphism from $H$ to $\Gal(L/k(t))$
given by restriction to $L$ is surjective.
Since $\Gal(K_\infty^F/k(t))$ is a quotient of $\Gal(L/k(t))$,
the restriction homomorphism from $H$ to $\Gal(K_\infty^F/k(t))$
is also surjective.
That is, the natural homomorphism $H\to G_\infty /F$ is surjective,
meaning that $H$ intersects every coset of $F$ nontrivially.
Hence we have $H = G_\infty$, by Lemma \ref{easy}.
\end{proof}

\begin{lemma}\label{almost}
  Suppose that $K_\infty^F$ is a finite extension of $k(t)$.  Then
  there is an integer $m\geq 1$ (depending only on $f$ and $k$)
  such that for any $\alpha\in k$ for which $G_m \cong G_{\alpha,m}$,
  we have $G_\infty \cong G_{\alpha,\infty}$.
\end{lemma}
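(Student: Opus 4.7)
The plan is to reduce to Lemma~\ref{H} by choosing $m$ so that $K_m$ already contains $K_\infty^F$, and then read off the conclusion.

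First I would exploit the hypothesis that $K_\infty^F$ is finite over $k(t)$ together with the fact that $K_\infty = \bigcup_{n\ge 0} K_n$ is a directed union. Picking finitely many generators of $K_\infty^F$ over $k(t)$ and taking $m$ large enough that all these generators lie in $K_m$ yields an integer $m$---depending only on $f$ and $k$, since $K_\infty^F$ does---such that $K_\infty^F \subseteq K_m$.

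Next, for any $\alpha\in k$ with $G_m \cong G_{\alpha,m}$, I would view $G_{\alpha,\infty}$ as a closed subgroup $H$ of $G_\infty$ via the decomposition-group embedding from Section~\ref{subsec:tec}. Since this embedding restricts at each finite level to the corresponding decomposition-group embedding $G_{\alpha,n}\hookrightarrow G_n$, and since the two finite groups $G_{\alpha,m}$ and $G_m$ are abstractly isomorphic while one sits injectively inside the other, the restriction of $H$ to $\Gal(K_m/k(t))$ must be surjective.

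With $L = K_m$---a Galois extension of $k(t)$ sitting inside $K_\infty$ and containing $K_\infty^F$---Lemma~\ref{H} then applies and yields $H = G_\infty$, i.e., $G_{\alpha,\infty}\cong G_\infty$. The genuine content has been front-loaded into Lemma~\ref{H} (and thence into Lemma~\ref{easy}) via the Frattini-subgroup machinery; the present lemma is little more than an extraction of a finite level $m$ at which the fixed field $K_\infty^F$ is already realized, so I do not anticipate a substantive obstacle beyond correctly naming $m$ and verifying that it is independent of $\alpha$.
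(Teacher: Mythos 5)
Your proposal is correct and follows the same route as the paper: choose $m$ so that $K_m\supseteq K_\infty^F$ (possible since $K_\infty^F$ is finite over $k(t)$ and $K_\infty=\bigcup K_n$), realize $G_{\alpha,\infty}$ as a closed decomposition subgroup $H\subseteq G_\infty$ whose restriction to $K_m$ is the injective image of $G_{\alpha,m}$ in $G_m$, note that this injection between finite groups of the same order is a surjection, and conclude via Lemma~\ref{H}. Your added remark that an injective map between isomorphic finite groups is automatically surjective makes explicit a step the paper passes over, but the argument is otherwise identical.
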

\begin{proof}
  Since $K_\infty = \bigcup_{n=1}^\infty K_n$ and  $K_\infty^F$ has finite
  degree over $k(t)$, there exists $m\geq 1$ such that $K_m$ contains
   $K_\infty^F$.
   
   Given any $\alpha\in k$ for which $G_m \cong G_{\alpha,m}$,
   Let $H\subseteq G_{\infty}$ be the (closed) decomposition subgroup
   of $G_{\infty}$ for the prime $(t-\alpha)$ of $k[t]$, so that $H\cong G_{\alpha,\infty}$.
   Since the image of $G_{\alpha,m}$ in $G_m$ is the restriction of $H$
   to $K_m\supseteq K_\infty^F$,
   Lemma~\ref{H} shows that $G_{\infty}=H\cong G_{\alpha,\infty}$.
\end{proof}

\begin{lemma}\label{simple}
  Suppose that $G_1$ is a $p$-group.
  Then $G_\infty$ is a pro-$p$ group.
\end{lemma}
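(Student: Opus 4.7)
The plan is to prove by induction on $n$ that every finite Galois group $G_n$ is a $p$-group, from which the pro-$p$ property of $G_\infty = \varprojlim G_n$ follows immediately. The base case $n=1$ is the hypothesis. For the inductive step, I would use the short exact sequence
\[ 1 \lra \Gal(K_n/K_{n-1}) \lra G_n \lra G_{n-1} \lra 1 \]
coming from the tower $k(t) \subseteq K_{n-1} \subseteq K_n$, so that it suffices to show the kernel is a $p$-group whenever $G_{n-1}$ is.

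Since $f^n(x)-t = f\bigl(f^{n-1}(x)\bigr)-t$, the field $K_n$ is the compositum over $K_{n-1}$ of the subfields $M_\beta := K_{n-1}(f^{-1}(\beta))$ as $\beta$ ranges over the $d^{n-1}$ roots of $f^{n-1}(x)-t$, all of which lie in $K_{n-1}$. Restriction then gives an injection
\[ \Gal(K_n/K_{n-1}) \hookrightarrow \prod_{\beta} \Gal(M_\beta/K_{n-1}), \]
so it is enough to prove that each factor on the right is a $p$-group.

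Fix such a $\beta$. Because $f^{n-1}(\beta)=t$ is transcendental over $k$, the element $\beta$ must itself be transcendental over $k$, so $k(\beta)/k$ is purely transcendental. The $k$-algebra isomorphism $k(\beta)\to k(t)$ sending $\beta\mapsto t$ carries $f(x)-\beta$ to $f(x)-t$ and therefore extends to an isomorphism between the splitting field $k(\beta)(f^{-1}(\beta))$ of $f(x)-\beta$ over $k(\beta)$ and $K_1$. In particular, $\Gal(k(\beta)(f^{-1}(\beta))/k(\beta)) \cong G_1$, which is a $p$-group by hypothesis. Since $K_{n-1}\supseteq k(\beta)$ and all relevant extensions are separable, Lemma~\ref{isomgal} applied to $k(\beta)(f^{-1}(\beta))$ and $K_{n-1}$ over $k(\beta)$ identifies $\Gal(M_\beta/K_{n-1})$ with $\Gal\bigl(k(\beta)(f^{-1}(\beta))/k(\beta)(f^{-1}(\beta))\cap K_{n-1}\bigr)$, exhibiting it as a subgroup of $G_1$ and hence a $p$-group.

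The main subtlety lies in the third step: one has to verify that $\beta$ is indeed transcendental (immediate from $f^{n-1}(\beta)=t$) and then track carefully that the isomorphism $k(\beta)\to k(t)$ extends to an isomorphism of splitting fields compatible with the embedding into $K_n$, so that Lemma~\ref{isomgal} genuinely produces a subgroup of the already-known $G_1$ rather than of some a priori larger group. Once this identification is in place, the induction closes, and writing $G_\infty$ as the inverse limit of the finite $p$-groups $G_n$ gives the desired pro-$p$ conclusion.
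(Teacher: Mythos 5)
Your proof is correct. It is in essence a self-contained, spelled-out version of the fact that the paper cites in one line, namely that $G_\infty$ embeds into the infinite iterated wreath product of $G_1$ (quoting \cite[Lemma~3.3]{JKMT}), from which the pro-$p$ conclusion is immediate since $G_1$ is a $p$-group. You instead prove directly by induction that each $G_n$ is a $p$-group, via the exact sequence $1\to\Gal(K_n/K_{n-1})\to G_n\to G_{n-1}\to 1$, the injection of $\Gal(K_n/K_{n-1})$ into $\prod_\beta\Gal(M_\beta/K_{n-1})$, and the observation that $\Gal(M_\beta/K_{n-1})$ is (via Lemma~\ref{isomgal} and the base change $k(\beta)\cong k(t)$) a subgroup of a group abstractly isomorphic to $G_1$. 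This is precisely the mechanism behind the wreath-product embedding, so the two arguments are the same in substance; yours has the virtue of being elementary and self-contained (no external citation needed), while the paper's is shorter by deferring to a standard result. One small point worth being explicit about in a write-up: the isomorphism $\Gal(k(\beta)(f^{-1}(\beta))/k(\beta))\cong G_1$ is an abstract isomorphism obtained by transport along $\beta\mapsto t$; it is not an identification inside $\overline{k(t)}$, but that is all the argument requires, since you only need to know the isomorphism type of the ambient group in order to conclude the subgroup $\Gal(M_\beta/K_{n-1})$ is a $p$-group.
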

\begin{proof}
It is well known that $G_\infty$ is a subgroup of the infinite wreath product
of $G_1$ (see, for example, \cite[Lemma~3.3]{JKMT}).
Since this product is a pro-$p$ group, so is $G_{\infty}$.
  \end{proof}

The following result is a standard fact regarding pro-$p$ groups.

  \begin{lemma}\label{pro-p}
   Let $G$ be a pro-$p$ group.  Then every closed maximal subgroup of
   $G$ is normal of index $p$ in $G$.  
  \end{lemma}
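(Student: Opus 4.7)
The plan is to reduce the lemma to the classical finite $p$-group fact that maximal subgroups of a finite $p$-group are normal of index $p$; the bridge from the profinite setting to the finite one will be the observation that a closed maximal subgroup of a pro-$p$ group must automatically be open.

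For this opening step, given a closed maximal subgroup $M \subsetneq G$, I would use the standard fact that in a profinite group the closed subgroup $M$ equals the intersection $\bigcap_{U} MU$, where $U$ runs over the open normal subgroups of $G$.  If $MU = G$ held for every such $U$, then $M = G$, contradicting that $M$ is proper.  Hence there exists an open normal $U$ with $MU \subsetneq G$; by the maximality of $M$, this forces $MU = M$, i.e., $U \subseteq M$, so $M$ is open and contains the open normal subgroup $U$.

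Once $M \supseteq U$ is in hand, $P := G/U$ is a finite $p$-group in which $M/U$ is a maximal subgroup, and it suffices to prove the statement for this finite quotient.  Here I would invoke the standard normalizer-growth fact for finite $p$-groups: every proper subgroup of a finite $p$-group is strictly contained in its normalizer.  Applied to the maximal subgroup $M/U$, this forces $N_P(M/U) = P$, so $M/U$ is normal in $P$; then the quotient $P/(M/U)$ is a nontrivial $p$-group with no proper nontrivial subgroups, and is therefore cyclic of order $p$.  Pulling back along the surjection $G \to G/U$ shows that $M$ is normal in $G$ with $[G : M] = p$, as desired.  The only slightly delicate point is the opening reduction to the open case; the rest is classical finite $p$-group theory.
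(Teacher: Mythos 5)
Your proof is correct and complete; the paper itself does not prove this lemma, citing it only as a standard fact about pro-$p$ groups, so there is no argument in the paper to compare against. Your route — showing a closed maximal subgroup $M$ is automatically open via $M = \bigcap_U MU$ (over open normal $U$), then passing to the finite $p$-group quotient $G/U$ and applying the normalizer-growth argument — is exactly the standard proof one finds in references on profinite groups, and all the individual steps check out: $MU$ is an open (hence closed) subgroup whenever $U$ is open normal, maximality forces $U \subseteq M$, and the correspondence theorem transports normality and the index-$p$ conclusion from $G/U$ back to $G$.
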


The next statement  follows from standard facts regarding \'{e}tale fundamental groups.

\begin{lemma}\label{geo}
  Let $\ell$ be an algebraically closed field.  
  Let $S$ be a finite set of primes in the field $\ell(t)$, and let
  $\overline{\ell(t)}$ be an algebraic closure of $\ell(t)$.
  Let $p$ be a rational prime that is not equal to the characteristic of $\ell$.
  Then $\overline{\ell(t)}$ contains exactly $\frac{p^{|S| -1} - 1}{p-1}$ degree $p$
  normal extensions of $\ell(t)$ that are unramified away from primes in
  $S$.

\end{lemma}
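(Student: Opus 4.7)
The plan is to translate the statement into one about open subgroups of the \'{e}tale fundamental group of $U := \PP^1_\ell \setminus S$. A normal extension of $\ell(t)$ of prime degree $p$ is automatically cyclic, and being unramified at all primes of $\ell(t)$ outside $S$ is equivalent to the normalization of $\PP^1_\ell$ in the extension restricting to a connected Galois \'{e}tale cover of $U$. Fixing the algebraic closure $\overline{\ell(t)}$ (which amounts to fixing a geometric base point) then puts degree-$p$ normal subextensions of $\overline{\ell(t)}/\ell(t)$ unramified outside $S$ into bijection with open normal subgroups of index $p$ in $\pi_1(U)$.

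Next I would invoke the standard structure theorem: for $\ell$ algebraically closed and $p \ne \ch(\ell)$, the maximal pro-$p$ quotient $\pi_1(U)^{(p)}$ is a free pro-$p$ group of rank $|S|-1$. In characteristic zero this is immediate from the Riemann existence theorem, via comparison with the topological fundamental group of an $|S|$-punctured sphere, which is free of rank $|S|-1$. In characteristic $q > 0$ with $q \ne p$, one invokes Grothendieck's prime-to-$q$ specialization theorem (SGA~1, Expos\'{e}~XIII) to reduce to the characteristic-zero case.

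Now $\ZZ/p\ZZ$ is elementary abelian of rank $1$, so every continuous homomorphism $\pi_1(U) \to \ZZ/p\ZZ$ factors through the Frattini quotient of $\pi_1(U)^{(p)}$, which is $\FF_p^{|S|-1}$; there are thus $p^{|S|-1}$ such homomorphisms, of which $p^{|S|-1}-1$ are surjective (all but the trivial one). Two surjections share the same kernel iff they differ by an element of $\Aut(\ZZ/p\ZZ) = \FF_p^\times$ (of order $p-1$), so the number of index-$p$ open normal subgroups, and hence of degree-$p$ normal subextensions unramified outside $S$, is exactly $\frac{p^{|S|-1}-1}{p-1}$.

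The only delicate point is justifying that wild ramification at the punctures does not contribute to the pro-$p$ quotient when $p \ne \ch(\ell)$, which is precisely the content of the prime-to-$q$ specialization theorem. A fully elementary alternative proceeds by Kummer theory: since $\mu_p \subseteq \ell$ and $\ell^\times = (\ell^\times)^p$, one identifies $\ell(t)^\times/(\ell(t)^\times)^p$ with the free $\FF_p$-vector space on the classes $\{[t-\alpha]\}_{\alpha\in\ell}$, and a direct valuation computation (including at $\infty$) shows that the subspace corresponding to extensions unramified outside $S$ has $\FF_p$-dimension $|S|-1$ in both cases ($\infty \in S$ and $\infty \notin S$), yielding the same count after dividing by the $\FF_p^\times$-action on Kummer generators.
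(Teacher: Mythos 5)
Your argument follows essentially the same route as the paper: both reduce the count to index-$p$ normal subgroups of (a suitable completion of) a free group of rank $|S|-1$ via the structure of $\pi_1(\PP^1_\ell\setminus S)$, and both then count such subgroups by counting nontrivial continuous homomorphisms to $\ZZ/p\ZZ$ modulo the $\FF_p^\times$-action, giving $(p^{|S|-1}-1)/(p-1)$. The paper invokes SGA~1, Exp.~X, Cor.~2.12 directly for the first step, whereas you unfold it (Riemann existence in characteristic zero, prime-to-$q$ specialization otherwise) and also sketch a Kummer-theoretic alternative; one small wording fix: since $p\neq\ch(\ell)$, a degree-$p$ cover has \emph{no} wild ramification, so what specialization is really handling is that the full $\pi_1$ of an affine curve in positive characteristic is not topologically finitely generated, not that wild ramification must be excised from the pro-$p$ quotient.
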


\begin{proof}
  By \cite[X, Cor. 2.12]{Gro} (see also \cite[Section 7.1]{Volk} for a
  discussion over $\CC$), the number of Galois extensions of $\ell(t)$
  of degree $p$ in $\overline{\ell(t)}$ is equal to the number of normal subgroups of index
  $p$ in a free group of rank $|S| - 1$.   

  There are $p^s$ homomorphisms from a free group $G$ with $s$
  generators to $\ZZ/p\ZZ$, since each generator may be mapped to any
  of the $p$ elements of $\ZZ/p\ZZ$.  Hence, there are $p^s-1$
  nontrivial such homomorphisms.  Now, for each normal subgroup $N$ of
  index $p$ in $G$ there are exactly $p-1$ homomorphisms with
  kernel $N$, each determined by the image of a fixed generator $aN$
  for $G/N$.  So we obtain exactly $\frac{p^s -1}{p-1}$ normal
  subgroups of index $p$, as desired.  
\end{proof}

\begin{remark} 
Lemma~\ref{geo}  is false if $p=\charact k$,
since for any monic polynomial $g\in k[t]$, the splitting field of the polynomial
$x^p-x+g(t)\in k(t)[x]$ is a new degree $p$ normal extension of $k(t)$
ramified only above the place at infinity from $k(t)$.  
\end{remark}

\begin{lemma}\label{K}
  Fix $D\geq 1$, and suppose that $k_\infty$ contains only
  finitely many subfields of degree at most $D$ over $k$.
  Suppose further
  that $K_\infty\cdot \kbar$ contains only
  finitely many subfields of degree at most $D$ over $\kbar(t)$.
  Then $K_\infty$ has only finitely many
  extensions of degree at most $D$ over $k(t)$.  
\end{lemma}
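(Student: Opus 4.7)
The plan is to count finite $k(t)$-subextensions $L\subseteq K_\infty$ with $[L:k(t)]\le D$ by encoding each $L$ via its horizontal part $k':=L\cap\kbar$ and its vertical part $M:=L\cdot\kbar$, and then bounding the number of ``models'' of $M$ over $k'(t)$ that can sit inside $K_\infty$. First I would reduce to the Galois case: replacing $L$ by its Galois closure $\tilde L$ in $K_\infty$ gives $[\tilde L:k(t)]\le D!$, and since each Galois $\tilde L$ has only finitely many subfields, it suffices to bound the Galois such extensions. For Galois $\tilde L$, both $k':=\tilde L\cap\kbar$ (a Galois subfield of $k_\infty$ over $k$ of degree at most $D!$) and $M:=\tilde L\cdot\kbar$ (a Galois extension of $\kbar(t)$ of the same bounded degree contained in $K_\infty\cdot\kbar$) are finite in number by the two hypotheses applied at the enlarged bound $D!$.

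With $(k',M)$ fixed, I would next show that $L_\infty:=M\cap K_\infty$ equals $\tilde L\cdot k_\infty(t)$, by identifying $L_\infty$ with the fixed field of $\Gal(K_\infty\cdot\kbar/K_\infty)\cong\Gal(\kbar/k_\infty)$ acting naturally on $M$. Consequently $L_\infty$ is determined by $M$ alone, and $\tilde L$ is a subfield of $L_\infty$ satisfying $\tilde L\cdot k_\infty(t)=L_\infty$ and $\tilde L\cap k_\infty(t)=k'(t)$. By Lemma~\ref{isomgal}, such $\tilde L$'s correspond bijectively to complements of the finite group $\Delta:=\Gal(L_\infty/k_\infty(t))$ inside $\Gal(L_\infty/k'(t))$, equivalently to continuous sections of
\[ 1\lra\Delta\lra\Gal(L_\infty/k'(t))\lra\Gamma\lra 1,\qquad\Gamma:=\Gal(k_\infty/k'). \]

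The main obstacle will be showing there are only finitely many such sections. Fixing one section (if any exists) to identify the set of sections with $Z^1(\Gamma,\Delta)$, where $\Gamma$ acts on $\Delta$ by the induced conjugation, I would argue as follows: the action has finite-index kernel $\Gamma_0$ in $\Gamma$ (of index at most $|\Aut(\Delta)|$), and any continuous crossed homomorphism $c:\Gamma\to\Delta$ restricts on $\Gamma_0$ to a genuine group homomorphism into the finite group $\Delta$. Hence $\ker(c|_{\Gamma_0})$ is a closed subgroup of $\Gamma$ of index at most $|\Delta|\cdot|\Aut(\Delta)|$; the first hypothesis, applied at this enlarged degree bound (which translates to a degree bound on intermediate fields of $k_\infty$ over $k$), yields only finitely many such kernels. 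Fixing the kernel, $c$ is determined by its values on a finite set of coset representatives of $\Gamma_0$ in $\Gamma$, each drawn from the finite set $\Delta$, producing finitely many crossed homomorphisms and hence finitely many sections. The argument implicitly uses the hypotheses at degrees somewhat larger than $D$, which is not an issue in the paper's intended application since Theorem~\ref{k} supplies the first hypothesis at all degrees.
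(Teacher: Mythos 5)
Your approach is genuinely different from the paper's and is essentially sound, but it has a real issue with the degree bounds on the hypotheses that your closing sentence does not fully address.

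The paper's proof is short and direct: fix $L$, observe that any $L'$ with $L'\cdot\kbar = L\cdot\kbar$ satisfies $L\subseteq L\cdot L'\subseteq L\cdot\kbar$, so $L\cdot L' = L\cdot\ell$ for some constant field $\ell = (L\cdot L')\cap\kbar\subseteq k_\infty$ of bounded degree; there are finitely many such $\ell$ by the first hypothesis, and each finite extension $L\cdot\ell$ of $k(t)$ has only finitely many subfields. Crucially, the map $L\mapsto L\cdot\kbar$ lands in degree-$\le D$ extensions of $\kbar(t)$, so the \emph{second} hypothesis is only ever invoked at degree $D$. (The first hypothesis is used at a degree up to $D^2$ rather than $D$ as the paper writes, but that is immaterial since Theorem~\ref{k} holds at every degree.)

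Your reduction to Galois closures is where the trouble lies. It is the right move to make your section-counting machinery run, but it forces $[\tilde L\cdot\kbar:\kbar(t)]\le D!$ rather than $\le D$, so you need the \emph{second} hypothesis at degree $D!$. You flag the degree inflation, but you only explain why it is harmless for the \emph{first} hypothesis. In the place where the paper actually applies Lemma~\ref{K} --- the proof of Lemma~\ref{number}, with $D=p$ --- the second hypothesis is established only at degree $p$, by combining Lemma~\ref{geo} (normal degree-$p$ extensions) with Lemma~\ref{pro-p}. Those lemmas do not directly supply finiteness at degree $p!$ once $p\ge 3$. The gap is fixable (the geometric fundamental group is topologically finitely generated because $f$ is PCF, hence has finitely many open subgroups of each index), but that is an additional input your argument needs and the paper, as written, does not provide. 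The paper's constant-field-extension trick is designed precisely to avoid Galois closures so that the geometric hypothesis is only needed at degree $D$.

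The rest of your argument --- identifying $M\cap K_\infty = \tilde L\cdot k_\infty(t)$, translating the problem to complements of $\Delta$ in $\Gal(L_\infty/k'(t))$, and bounding non-abelian $1$-cocycles on $\Gamma$ by restricting to the finite-index subgroup $\Gamma_0$ acting trivially on $\Delta$ and using the first hypothesis to bound the possible kernels --- is correct. One small imprecision: fixing $\ker(c|_{\Gamma_0})$, the cocycle $c$ is determined not only by its values on coset representatives of $\Gamma_0$ in $\Gamma$ but also by the choice of homomorphism $\Gamma_0/\ker\to\Delta$; both contribute finitely many options, so the conclusion stands.
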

\begin{proof}
 For any field $L$ with $k(t)\subseteq L \subseteq K_{\infty}$ and $[L:k(t)]\leq D$,
 the field $L\cdot \kbar$ satisfies
 $\kbar(t)\subseteq L\cdot\kbar \subseteq K_{\infty}\cdot\kbar$
 and $[L\cdot\kbar:\kbar(t)]\leq D$.
 Thus, by the second assumption, it suffices to show that for any such field $L$,
 there are only finitely many other such fields $L'$ satisfying
 $L' \cdot \kbar = L \cdot \kbar$.  

 For any such $L,L'$, we have
  $L \subseteq L \cdot L' \subseteq L \cdot L' \cdot \kbar = L \cdot \kbar$, and
  $[L \cdot L' : L] \leq D$.
  Hence, $L \cdot L' = L \cdot \ell$ for some field
  $\ell \subseteq k_\infty$ with $[\ell: k] \leq D$.  There are only
  finitely many such subfields $\ell$ in $k_\infty$, by the first assumption.
  Finally, for each
  such $\ell$, the field $L \cdot \ell$ has only finitely many subfields
  $L'$ that contain $k(t)$. Thus, our proof is complete.
\end{proof}


\begin{lemma}\label{number}
 Let $p$ be a rational prime, let $k$ be a field of characteristic not equal to $p$,
 and let $f(x)$ be a post-critically
   finite polynomial with coefficients in $k$ such that $G_1$
   is a $p$-group.  If $k_\infty$ contains only finitely many
   extensions of $k$ of degree~$p$,
   then $K_\infty^F$ is a finite extension of $k(t)$.  
\end{lemma}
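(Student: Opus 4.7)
The plan is to apply Lemma~\ref{1}: it suffices to show $G_\infty$ has only finitely many closed maximal subgroups. By Lemma~\ref{simple}, $G_\infty$ is pro-$p$; Lemma~\ref{pro-p} then gives that every closed maximal subgroup is normal of index $p$. Under the Galois correspondence, such subgroups are in bijection with degree-$p$ Galois subextensions $L$ of $K_\infty/k(t)$, so my task becomes to count such $L$.

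For any such $L$, the constant field $\ell_L := L \cap \kbar$ lies in $k_\infty$, is Galois over $k$, and has degree dividing $p$. Hence either $\ell_L = k$, or $[\ell_L:k] = p$. In the latter case $L = \ell_L(t)$ is determined by $\ell_L$, and the hypothesis produces only finitely many such $L$. Thus it remains to treat the case $\ell_L = k$.

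In that case, Lemma~\ref{isomgal} gives that $M := L\cdot\kbar$ is a degree-$p$ Galois extension of $\kbar(t)$. Because $f$ is PCF, its post-critical set $P$ is finite, and the ramification of each $K_n\cdot\kbar$ over $\kbar(t)$ occurs only above the finite set $S := P\cup\{\infty\}$; hence the same is true of $M$. Lemma~\ref{geo} (applicable since $\charact k\neq p$) then bounds the number of possible $M$. For each fixed $M$, I bound the number of $L$'s giving this $M$ with $\ell_L = k$ by mimicking the proof of Lemma~\ref{K}: fix one such $L_1$; for any other such $L_2$, the compositum $L_1\cdot L_2\subseteq M$ satisfies $L_1\cdot L_2 = L_1\cdot\ell$, where $\ell := (L_1\cdot L_2)\cap\kbar\subseteq k_\infty$. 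Since $L_1$ has constant field $k$, $[L_1\cdot\ell:L_1]=[\ell:k]$, and since $L_2/k(t)$ is Galois of prime degree $p$, $[L_1\cdot L_2 : L_1]$ divides $p$. Hence $[\ell:k]\in\{1,p\}$, yielding finitely many $\ell$ by hypothesis, and for each such $\ell$ the finite extension $L_1\cdot\ell/k(t)$ has only finitely many intermediate subfields, giving finitely many $L_2$.

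I expect the main subtlety to be the descent step just above. Our hypothesis controls only the degree-$p$ subfields of $k_\infty$, so Lemma~\ref{K} cannot be applied as a black box — it would require control on all subfields of degree at most $p$. What rescues the argument is the observation that here the auxiliary constant field $\ell$ is forced to have degree $1$ or $p$ over $k$, a consequence of $L_2/k(t)$ being Galois of prime degree $p$. Beyond the pro-$p$ maximal-subgroup analysis, this is the essential place where primality of $p$ enters.
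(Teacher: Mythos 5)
Your proof is correct and follows essentially the same path as the paper's, invoking the same key ingredients (Lemmas~\ref{1}, \ref{simple}, \ref{pro-p}, \ref{geo}, \ref{isomgal}) in the same order. Your concern that Lemma~\ref{K} cannot be applied as a black box is slightly overstated: the paper does invoke it directly with $D=p$, and its hypotheses are satisfied because $\Gal(k_\infty/k)$ and $\Gal(\kbar\cdot K_\infty/\kbar(t))$ are respectively a quotient and a subgroup of the pro-$p$ group $G_\infty$, so their open subgroups have index a power of $p$ and hence any subfield of degree at most $p$ has degree exactly $1$ or $p$---which is precisely the observation that your inlined version of the descent argument makes explicit.
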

\begin{proof}
By Lemma~\ref{simple}, $G_{\infty}=\Gal(K_{\infty}/ k(t))$ is a pro-$p$ group.
Hence, $\Gal(\kbar \cdot K_\infty /\kbar(t))$, which is isomorphic to a subgroup
of $G_{\infty}$ by the natural restriction homomorphism, is also a pro-$p$ group.

Any degree~$p$ extension of $\kbar(t)$ inside $\kbar \cdot K_{\infty}$
corresponds to a closed maximal subgroup of the pro-$p$ group
$\Gal(\kbar \cdot K_\infty /\kbar(t))$
and hence is normal by Lemma~\ref{pro-p}.
Meanwhile, note that $\kbar \cdot K_\infty$ is unramified over $\kbar(t)$
outside the places corresponding to the post-critical set of $f$.
The set of such places is finite, since $f$ is PCF.
Hence, by Lemma~\ref{geo}, there are only finitely many extensions
of $\kbar(t)$ in $K_{\infty}$ that are of degree $p$.

Thus, by Lemma~\ref{K} and the hypotheses,
$K_{\infty}$ contains only finitely many extensions of degree $p$ over $k(t)$.
Applying Lemma~\ref{pro-p}, it follows that $G_{\infty}$ has only finitely many
closed maximal subgroups.
The desired conclusion is then immediate from Lemma~\ref{1}.
\end{proof}

\subsection{Proof of our first two main theorems}
We are now ready to prove Theorem~\ref{more-general}.

   \begin{thm}[Theorem \ref{more-general}]
  Let $k$ be a number field and let $f \in k(x)$ be a PCF rational
  function such that $\Gal(K_1/k_1(t))$ is a $p$-group.  Then
  there is an integer $m\geq 1$ (depending on $f$ and $k$) such that
  $G_\infty = G_{\alpha,\infty}$ whenever $G_m = G_{\alpha,m}$.
 \end{thm}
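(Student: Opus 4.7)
The plan is to reduce the theorem to Lemma~\ref{almost} after base-changing from $k$ to the finite extension $k_1$. Applying Lemma~\ref{almost} directly over $k$ is awkward because $G_\infty$ itself need not be pro-$p$: the hypothesis only controls the geometric part $\Gal(K_1/k_1(t))$, while the arithmetic quotient $G_\infty / \Gal(K_\infty/k_\infty(t)) \cong \Gal(k_\infty/k)$ may contribute factors at other primes. Since $[k_1:k] \le |G_1| < \infty$, the field $k_1$ is itself a number field, and it is natural to work over $k_1(t)$ instead.

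Over $k_1(t)$, the group $\Gal(K_\infty/k_1(t))$ is pro-$p$: its ``first level'' quotient is $\Gal(K_1/k_1(t))$, a $p$-group by hypothesis, and the standard wreath-product embedding underlying Lemma~\ref{simple} then forces every $\Gal(K_n/k_1(t))$ to be a $p$-group. I would then apply Lemma~\ref{number} with base field $k_1$; although that lemma is stated for polynomials, its proof uses nothing about $f$ beyond the PCF condition (via Lemmas~\ref{geo}~and~\ref{K}), so it extends verbatim to any PCF rational function. The hypothesis about finitely many degree-$p$ extensions of $k_1$ inside $k_\infty$ is supplied by Theorem~\ref{k} applied to the number field $k_1$. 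This yields that the fixed field $K_\infty^{F_1}$ of the Frattini subgroup $F_1$ of $\Gal(K_\infty/k_1(t))$ is a finite extension of $k_1(t)$; consequently there is some $m\ge 1$ with $K_m\supseteq K_\infty^{F_1}$.

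To conclude, suppose $\alpha\in k$ satisfies $G_m = G_{\alpha,m}$. Then the decomposition subgroup at a prime above $(t-\alpha)$ in $G_m$ equals all of $G_m$, so its intersection with the (normal) subgroup $\Gal(K_m/k_1(t))\subseteq G_m$ equals all of $\Gal(K_m/k_1(t))$. Applying Lemma~\ref{H} with base field $k_1(t)$, with $L = K_m\supseteq K_\infty^{F_1}$, and with $H$ the decomposition subgroup at $(t-\alpha)$ inside $\Gal(K_\infty/k_1(t))$, we deduce that $H=\Gal(K_\infty/k_1(t))$. Using Proposition~\ref{kn} to see that $k_1\subseteq K_{\alpha,\infty}$ and to identify $H$ with $\Gal(K_{\alpha,\infty}/k_1)$, this is precisely the hypothesis of Theorem~\ref{geo-does} with $\ell = k_1$, and we conclude $G_\infty = G_{\alpha,\infty}$.

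The main obstacle is the base-change bookkeeping rather than any single hard step. One must confirm that the proof of Lemma~\ref{number} carries over from polynomials to arbitrary PCF rational functions (it does, since it only uses the finiteness of the post-critical set), and that the decomposition subgroup at $(t-\alpha)$ in $G_m$ restricts surjectively onto the corresponding decomposition subgroup in $\Gal(K_m/k_1(t))$. These verifications are what force the detour through $k_1$; without the base change, $G_\infty$ could fail to be pro-$p$ and Lemma~\ref{number} would be inapplicable over $k$ directly.
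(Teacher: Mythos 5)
Your proposal is correct and follows essentially the same route as the paper: reduce to the base field $k_1$ via Theorem~\ref{geo-does}, invoke Theorem~\ref{k} to bound the degree-$p$ extensions of $k_1$ inside $k_\infty$, feed that into Lemma~\ref{number} to get $K_\infty^F$ finite over $k_1(t)$, and conclude via Lemma~\ref{H}/Lemma~\ref{almost}. The paper compresses your last two paragraphs into a single citation of Lemma~\ref{almost} after the remark ``we may assume $k_1 = k$.'' One small point in your favor: you explicitly observe that Lemma~\ref{number} is stated only for PCF \emph{polynomials} while $f$ here is a rational function, and you correctly check that its proof (which only uses finiteness of the post-critical set via Lemmas~\ref{geo} and~\ref{K}) carries over verbatim; the paper applies Lemma~\ref{number} without comment, so your proposal actually patches a minor looseness in the published argument.
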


\begin{proof}
We may assume that $k_1 = k$, since Theorem \ref{geo-does} tells us
that $G_\infty = G_{\alpha,\infty}$ whenever
$\Gal(K_{\alpha,\infty}/ k_1) = \Gal(K_\infty/k_1(t))$.
By Theorem~\ref{k}, the field $k_\infty$ contains only finitely many
   extensions of degree $p$ over $k$, so by Lemma~\ref{number}, the field
   $K_\infty^F$ is a finite extension of $k(t)$.
   Lemma~\ref{almost} then gives the desired result.
\end{proof}


When $\deg f=2$, $G_1$ is a $2$-group. Thus,
Theorem~\ref{hilbert-plus} follows from Theorem~\ref{more-general}
because
the set of $\alpha$ in the number field $k$ for which the specialization of the finite
extension $K_m/k(t)$ to $K_{\alpha,m}/k$ fails to preserve the Galois group is a thin set.

\subsection{Other consequences} 

Recently, there has been a good deal of work on iterated Galois groups
over local fields (see \cite{Anderson, Berger, Ingram, Sing}, for
example).  Our next result is an analog of Theorem \ref{more-general}
for local fields.

   \begin{thm}\label{local}
     Let $k$ be a finite extension of $\QQ_q$ for some prime $q$, and
     let $f \in k(x)$ be a PCF rational function such that
     $\Gal(K_1/k_1(t))$ is a $p$-group.  Then there is an integer 
     $m\geq 1$ (depending on $f$ and $k$) such that
     $G_\infty = G_{\alpha,\infty}$ whenever $G_m = G_{\alpha,m}$.
   \end{thm}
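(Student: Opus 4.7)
The plan is to mimic the proof of Theorem~\ref{more-general}, replacing only the one step that used a genuinely global input. Tracing the number-field argument reveals three ingredients: Theorem~\ref{geo-does} (which lets one assume $k_1 = k$), Theorem~\ref{k} (which guarantees that $k_\infty$ contains only finitely many subfields of bounded degree over $k$), and then Lemmas~\ref{number}~and~\ref{almost} (which carry the conclusion). Of these, only Theorem~\ref{k} relies on $k$ being a number field, via its appeal to Lemma~\ref{N}.

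First I would reduce to the case $k_1 = k$ via Theorem~\ref{geo-does}; note that $k_1$ is still a finite extension of $\QQ_q$, so the hypotheses persist. Second, I would replace the appeal to Theorem~\ref{k} with the classical fact that a finite extension of $\QQ_q$ has only finitely many extensions of any given bounded degree inside a fixed algebraic closure. This is standard: it follows from Krasner's lemma together with the compactness of the space of monic polynomials of a fixed degree whose coefficients have bounded absolute value, or alternatively from Serre's mass formula. In particular, $k_\infty$ contains only finitely many subfields of degree $p$ over $k$, which is the exact input required by Lemma~\ref{number}.

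Third, the remainder of the argument transplants without modification. Since $\charact k = 0 \ne p$, the PCF hypothesis holds, and $\Gal(K_1/k(t))$ is a $p$-group, Lemma~\ref{number} yields that $K_\infty^F$ is a finite extension of $k(t)$, and Lemma~\ref{almost} then provides the required integer $m$. The only thing to double-check is that the ancillary results feeding Lemmas~\ref{number}~and~\ref{almost}---namely Lemmas~\ref{simple}, \ref{pro-p}, \ref{geo}, \ref{K}, \ref{1}, \ref{H}, and the commutative-algebra lemmas of Section~\ref{ext}---hold over an arbitrary characteristic-$0$ base field; this is immediate from inspection, since those lemmas are formulated abstractly in terms of valuations, Frattini subgroups, or \'etale fundamental groups, and do not single out the number-field case. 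Consequently, there is no serious obstacle beyond substituting the local finiteness theorem for Theorem~\ref{k}.
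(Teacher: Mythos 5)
Your proposal is correct and follows essentially the same route as the paper's own proof: reduce to $k_1=k$, substitute the classical finiteness of extensions of bounded degree for a $p$-adic field in place of Theorem~\ref{k}, and then invoke Lemmas~\ref{number} and~\ref{almost}. The paper's proof is just a terser version of exactly this argument.
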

   \begin{proof}
     As in the proof of Theorem \ref{more-general}, we may assume that
     $k_1 = k$.  A finite extension of $\QQ_q$ has only finitely many
     extensions of bounded degree, so $k_\infty$ contains only finitely
     many extensions of bounded degree over $k$. Lemmas~\ref{almost}
     and~\ref{number} then give the existence of the desired integer $m$.
   \end{proof}

We are also able to prove a result about specializations of iterated
Galois groups from number fields to finite fields.  
   
   \begin{thm}\label{specialize}
     Let $k$ be a number field with ring of integers $\fo_k$, and let
     $f \in k(x)$.  Suppose that $k = k_\infty$ and that
     $\Gal(K_1/k(t))$ is a $p$-group.  Then for all but finitely many
     primes $\fp$ of $\fo_k$, the infinite iterated Galois group
     $G_\infty$ for $f$ over $k(t)$
     is isomorphic to the infinite iterated Galois group for the reduction
     $\bar{f}_\fp \in \fo_k / \fp [x]$ over $\fo_k / \fp (t)$.
   \end{thm}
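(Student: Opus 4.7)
The plan is to adapt the Frattini-subgroup strategy behind Lemma \ref{almost} (and in particular Lemma \ref{H}) to the setting of reduction modulo a prime of $\fo_k$, rather than specialization of the transcendental $t$. The first order of business is to verify that $K_\infty^F$ is a finite extension of $k(t)$. The hypothesis $k = k_\infty$ forces $k_1 = k$, so the assumption that $\Gal(K_1/k(t))$ is a $p$-group is exactly the assertion that $G_1$ is a $p$-group; and since $k_\infty = k$ contains no proper extensions of $k$ whatsoever, Lemma \ref{number} applies and gives $[K_\infty^F : k(t)] < \infty$. Fix $m \ge 1$ with $K_\infty^F \subseteq K_m$.

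Next, I establish a finite-level specialization. Because $k_m \subseteq k_\infty = k$, the Galois extension $K_m/k(t)$ is regular over $k$, so it is standard that for all but finitely many primes $\fp$ of $\fo_k$: (a) $f$ has good separable reduction at $\fp$, making $\bar{f}_\fp$ well-defined; (b) the prime $\fp \cdot \fo_k[t]$ is unramified in the integral closure $B_m$ of $\fo_k[t]$ in $K_m$, and there is a unique prime $\fP_m$ of $B_m$ lying above it; and (c) the residue field of $\fP_m$ is the splitting field of $\bar{f}_\fp^{\,m}(x) - t$ over $(\fo_k/\fp)(t)$, with Galois group $\bar{G}_m$ naturally identified via the decomposition group of $\fP_m$ with all of $G_m$. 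The geometric input for (b)--(c) is that the normalization of $\Spec \fo_k[t]$ in $K_m$ has geometrically integral generic fiber (by regularity of $K_m/k$), hence geometrically integral special fibers for all but finitely many $\fp$.

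Finally, I bootstrap to the infinite level. Extend $\fP_m$ to a compatible tower $\fP_m \subseteq \fP_{m+1} \subseteq \cdots$ in the integral closures of $\fo_k[t]$ in the various $K_n$ for $n \ge m$. The decomposition group $D_\infty \subseteq G_\infty$ of this tower is naturally isomorphic to the iterated Galois group $\bar{G}_\infty$ of $\bar{f}_\fp$, by a construction analogous to the embedding recalled in Section \ref{subsec:tec} (with $\fp \cdot \fo_k[t]$ replacing the prime corresponding to $\alpha$). By (c), the image of $D_\infty$ in $G_m$ is all of $G_m$, and since $K_m \supseteq K_\infty^F$, Lemma \ref{H} forces $D_\infty = G_\infty$, whence $\bar{G}_\infty \cong G_\infty$ as desired.

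The main obstacle is the ``stays prime'' aspect of (b)--(c), which depends essentially on the regularity of $K_m/k$ (precisely where the hypothesis $k = k_\infty$ is really used): without regularity, the special fibers could generically decompose, and the reduced Galois group would then strictly embed in $G_m$. Once this standard specialization fact is verified, the remainder is a direct application of the Frattini-subgroup machinery developed in Section \ref{more}.
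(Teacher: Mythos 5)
Your proof follows essentially the same route as the paper's: establish $[K_\infty^F : k(t)] < \infty$ via Lemma~\ref{number}, choose $m$ with $K_\infty^F \subseteq K_m$, show the finite-level reduction preserves $G_m$ for almost all $\fp$, and then bootstrap with Lemma~\ref{H}. The only difference is that you justify the finite-level specialization step directly from regularity of $K_m/k(t)$ and geometric irreducibility of the special fibers, whereas the paper simply cites \cite[Proposition~4.1]{JKMT}; the underlying content is the same.
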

   \begin{proof}
     By Lemma~\ref{number}, the
     field $K_\infty^F$ is a finite extension of $k(t)$.
     Therefore, as in the proof of Lemma~\ref{almost}, there is an integer $m\geq 1$
     such that $K_m$ contains $K_{\infty}^F$.

     Let $h(x)=f^m(x)-t\in k(t)[x]$, and for a prime $\fp$ of $\fo_k$,
     let $\bar{h}_\fp(x) = \bar{f}_\fp^m(x) -t\in (\fo_k / \fp) (t)[x]$.
     (This reduction makes sense for all but finitely many primes $\fp$ of $\fo_k$.)
     Then $K_m$ is the splitting field of $h$ over $k(t)$,
     and we may define $K'_{m,\fp}$ to be 
     the splitting field of $\bar{h}_\fp$ over $\fo_k / \fp (t)$.
     Further define $G'_{m,\fp}=\Gal(K'_{m,\fp} / (\fo_k / \fp) (t))$.
     
     By \cite[Proposition~4.1]{JKMT}, we have $G_m\cong G'_{m,\fp}$
     for all but finitely many primes $\fp$ of $\fo_k$.
     Meanwhile, the infinite iterated Galois group $G'_{\infty,\fp}$ for the reduction
     $\bar{f}_\fp$ over $\fo_k / \fp (t)$ is isomorphic to a closed subgroup $H_{\infty,\fp}$
     of $G_{\infty}$.
     (In particular, as a Galois group, $G'_{\infty,\fp}$ is an inverse limit of 
     finite groups; therefore its image $H_{\infty,\fp}$ is as well, and hence
     it is closed in the profinite topology on $G_{\infty}$.)
     Thus, because the restriction of $H_{\infty,\fp}$ to $K_m$ is all of $G_m$,
     and because $K_m \supseteq K_{\infty}^F$, we have $H_{\infty,\fp}= G_\infty$
     for all but finitely many $\fp$, by Lemma~\ref{H}.   
    \end{proof}

\section{Proof of Theorem \ref{quadratic3}}\label{quadratic}

 In this section, we give some more precise results in the case of PCF
 polynomials of the form $f(x) = x^{p^n} + c$ for $p$ a prime number
 and $n\geq 1$ an integer.

\subsection{Preliminary technical results} We start with a few useful lemmas.

 \begin{lemma}\label{first}
   Let $f(x) = x^{p^n} + c$ be a PCF polynomial defined over a field
   $k$ of characteristic other than $p$, and let
   $N=|\{f^i(0) : i\geq 0\}|$ be the size of the forward orbit of the
   critical point $0$.  Let $\ell$ be any algebraic extension of
   $k_1$.  Then $\Gal(\ell \cdot K_N / \ell(t))$ is isomorphic to the
   full $N$-th iterated wreath product of the cyclic group $C_{p^n}$,
   and $(\ell \cdot K_N) \cap \kbar = \ell$. 
   Furthermore, if $k = k_1$, then
   $G_{\infty}$ is isomorphic to a subgroup of the infinite iterated
   wreath product of $C_{p^n}$.
 \end{lemma}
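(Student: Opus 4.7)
The plan is to induct on $m$, establishing for each $1\le m\le N$: (i) $k_m=k_1=k(\zeta_{p^n})$; (ii) $\Gal(K_m/k_1(t))\cong W_m$, the $m$-fold iterated wreath product of $C_{p^n}$; and (iii) for any algebraic extension $\ell$ of $k_1$, the natural restriction $\Gal(\ell\cdot K_m/\ell(t))\to\Gal(K_m/k_1(t))$ is an isomorphism and $(\ell\cdot K_m)\cap\kbar=\ell$. Taking $m=N$ yields the first two assertions of the lemma.

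For the base case $m=1$: $K_1$ is the splitting field of $x^{p^n}-(t-c)$ over $k(t)$, forcing $k_1=k(\zeta_{p^n})$, and $\Gal(K_1/k_1(t))\cong C_{p^n}$ by Kummer theory since $t-c$ is transcendental. Claim (iii) follows from Lemma \ref{isomgal} together with the identity $\ell(t)\cap K_1=(\ell\cap k_1)(t)=k_1(t)$ and a degree count. For the inductive step with $m<N$, $K_{m+1}=K_m(\gamma_\beta:\beta\in f^{-m}(t))$ with $\gamma_\beta=(\beta-c)^{1/p^n}$; since $\mu_{p^n}\subset K_m$, Kummer theory embeds $\Gal(K_{m+1}/K_m)$ into $(\ZZ/p^n\ZZ)^{(p^n)^m}$. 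Each $\gamma_\beta$ is transcendental over $\kbar$, so (i) persists at level $m+1$, and (iii) follows as in the base case.

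The main obstacle is claim (ii) at level $m+1$, i.e., showing the Kummer embedding is surjective; equivalently, that the subgroup of Kummer relations $R=\{(a_\beta):\prod_\beta(\beta-c)^{a_\beta}\in K_m^{*p^n}\}$ inside $(\ZZ/p^n\ZZ)^{(p^n)^m}$ is trivial. My approach exploits that $R$ is $G_m$-stable (where $G_m$ acts by permuting the $\beta$'s) and $G_m\cong W_m$ is a $p$-group acting transitively on $f^{-m}(t)$: if $R\neq 0$, then $R^{G_m}\neq 0$ by the standard fixed-point counting for $p$-groups on $p$-groups, and by transitivity $R^{G_m}$ consists of diagonal relations $\prod_\beta(\beta-c)^a=u^{p^n}$. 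Using the identity $\prod_{\beta\in f^{-m}(t)}(\beta-c)=\pm(f^{m+1}(0)-t)$, this reduces to showing that $t-f^{m+1}(0)$ is not a $p^n$-th power in $K_m$. Since $m+1\le N$, the value $f^{m+1}(0)$ is not among the critical values $\{f^j(0):1\le j\le m\}$ of $f^m$, so the place $t=f^{m+1}(0)$ is unramified in $K_m/k(t)$; hence the valuation of $t-f^{m+1}(0)$ at any prime of $K_m$ above it equals $1$, which is not divisible by $p^n$. This ramification step is the technical heart of the argument.

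For the final assertion, when $k=k_1$ we have $\mu_{p^n}\subset k$, so each $K_{m+1}/K_m$ is a Kummer extension of exponent $p^n$ whose action on any fiber of $f$ is multiplication by a $p^n$-th root of unity, i.e., by a subgroup of $C_{p^n}$. Passing to the inverse limit, $G_\infty$ embeds into the infinite iterated wreath product of $C_{p^n}$.
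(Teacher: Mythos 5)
Your inductive step breaks at $m = N-1$ whenever the critical orbit of $0$ is strictly preperiodic, and this gap is not fillable. You claim that ``since $m+1\le N$, the value $f^{m+1}(0)$ is not among the critical values $\{f^j(0):1\le j\le m\}$ of $f^m$.'' This is correct for $m+1\le N-1$, since the first $N$ orbit elements $0,f(0),\dots,f^{N-1}(0)$ are pairwise distinct, and it is correct for $m+1=N$ when $f^N(0)=0$ (i.e., when $0$ is periodic). But when $0$ is strictly preperiodic, $f^N(0)=f^j(0)$ for some $1\le j\le N-1$, so $t=f^N(0)$ \emph{is} a branch point of $f^{N-1}$, the place $(t-f^N(0))$ is ramified in $K_{N-1}$, and your valuation argument collapses: the valuation of $t-f^N(0)$ at a prime of $K_{N-1}$ above it is $e>1$, and you cannot conclude $p^n\nmid ae$. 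Concretely, for $f(x)=x^2-2$ (a Misiurewicz parameter, $0\to -2\to 2\to 2$, $N=3$), writing $u=\sqrt{t+2}$ and $v_\pm=\sqrt{2\pm u}$, one has $\prod_{\beta\in f^{-2}(t)}(\beta+2)=(4-v_+^2)(4-v_-^2)=(v_+v_-)^2=2-t$, which is a perfect square in $\kbar\cdot K_2$, so the diagonal Kummer relation you want to rule out actually holds. And indeed $\Gal(\kbar\cdot K_3/\kbar(t))$ in this case is the dihedral group $D_8$ of order $16$ (the monodromy of the Chebyshev map $T_8$), nowhere near the full $3$-fold iterated wreath product of $C_2$, of order $128$.

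For what it is worth, this blind spot mirrors one in the paper's own proof, which invokes \cite[Theorem~3.1]{JKMT} after asserting that ``$f^i(a)\ne f^j(b)$ for $0\le i,j\le N$ unless $a=b=0$ and $i=j$''; for $x^2-2$ this fails at $(i,j)=(2,3)$ since $f^2(0)=f^3(0)=2$. So the problem is really with the stated hypotheses of the lemma, which would need a condition such as periodicity of the critical orbit (equivalently, $f(0),\dots,f^N(0)$ pairwise distinct) in order for either your Kummer-theoretic induction or the wreath-product machinery of \cite{JKMT} to close at level $N$. Your approach up to level $N-1$, and the whole inductive Kummer framework (the $p$-group fixed-point reduction to a diagonal relation, and the identity $\prod_\beta(\beta-c)=\pm(f^{m+1}(0)-t)$), is a genuinely different and cleaner route than the paper's black-box citation, and it isolates exactly the relation that obstructs the full wreath product in the preperiodic case. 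But as written, the appeal to ``$t=f^{m+1}(0)$ is unramified in $K_m$'' at the last step $m=N-1$ is false in general and needs either an added hypothesis or a different argument.
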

 
 \begin{proof}
  Since $\Gal(\ell \cdot K_N / \ell(t)) = \Gal(\ell \cdot k_1(f^{-N}(t))/ \ell(t))$,
  to prove the first statement, it suffices to
  show that  $\Gal(\ell \cdot K_N / \ell(t))$ is isomorphic to the
   full $N$-th iterated wreath product of the cyclic group $C_{p^n}$
   under the assumption that $k = k_1$.  Let $u\in f^{-1}(t)\subseteq K_1$. Then
   $f^{-1}(t)=\{\zeta^i u : 0\leq i\leq p^n-1\}$, where $\zeta$ is a
   primitive $p^n$-th root of unity. Thus, the assumption that $k=k_1$
   says precisely that $\zeta\in k$.  Since $f(x)-t$ is irreducible
   over $\kbar(t)$, it follows that  $\Gal(\ell \cdot K_1 / \ell(t))
   \cong C_{p^n}$. 

   The prime $(t-c)$ of $k(t)$ associated with $c=f(0)$ ramifies in $\ell \cdot K_1$ as
   $(u)^{p^n}$, and hence the ramification group of $(t-c)$
   must be the whole Galois group
   $\Gal(\ell \cdot K_1 / \ell(t))$, since the ramification index
   $p^n$ equals the order of $\Gal(\ell \cdot K_1 / \ell(t))\cong C_{p^n}$.
   Meanwhile, the subset $S=\{0\}$ of the critical points of
   $f$ has the property that for any $a\in S$, any critical point $b$
   of $f$ (i.e., any $b\in \{0,\infty\}$), and any $0\leq i,j\leq N$, we
   have $f^i(a)\neq f^j(b)$ unless $a=b=0$ and $i=j$.  Together, these
   are precisely the hypotheses of \cite[Theorem~3.1]{JKMT},
   which shows that
   $\Gal(\ell \cdot K_N / \ell(t))$ is isomorphic to the full
   $N$-th iterated wreath product of the cyclic group $C_{p^n}$.
   Since the same argument applied to $\kbar$ shows
   that $\Gal(\kbar \cdot K_N / \kbar(t))$ is the same
   iterated wreath product,
   Lemma~\ref{isomgal} shows that we must have
   $(\ell \cdot K_N) \cap \kbar(t) = \ell(t)$.
   Intersecting with $\kbar$, it follows that
   $(\ell \cdot K_N) \cap \kbar = \ell$.

  Finally, as noted in the proof of Lemma~\ref{simple},
  $G_\infty$ is a subgroup of the infinite iterated wreath product of
  $G_1\cong C_{p^n}$.
\end{proof}

\begin{remark}\label{rm}
Whether or not $k$ and $k_1$ coincide,
Lemma~\ref{first} also yields the equality $k_N = k_1$.
After all, we have $k_1 \subseteq k_N$ trivially,
and then choosing $\ell=k_1$ in Lemma~\ref{first} gives
$k_N \subseteq (k_1 \cdot K_N) \cap \kbar = k_1$.
 \end{remark}


 \begin{lemma}\label{count1}
   Let $f(x) = x^{p^n} + c$ be a PCF polynomial defined over a field
   $k$ of characteristic other than $p$, and let $N$ be the size of
   the forward orbit of the critical point $0$.  Then for any
   algebraic extension $\ell$ of $k_1$, the field
   $\ell \cdot K_N$ contains at least $(p^N - 1)/(p-1)$ extensions
   of degree $p$ over $\ell(t)$.
 \end{lemma}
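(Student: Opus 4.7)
The plan is to use Lemma~\ref{first} to identify $G := \Gal(\ell \cdot K_N / \ell(t))$ with the full $N$-th iterated wreath product $W_N$ of $C_{p^n}$, and then count the index-$p$ subgroups of $W_N$. By the Galois correspondence, degree-$p$ extensions of $\ell(t)$ inside $\ell \cdot K_N$ correspond bijectively to index-$p$ subgroups of $G$. Since $W_N$ is a finite $p$-group, every such subgroup is maximal (and normal, by Lemma~\ref{pro-p}), hence contains the Frattini subgroup $\Phi(W_N)$. It therefore suffices to show $W_N / \Phi(W_N) \cong (\ZZ/p\ZZ)^N$, whose hyperplanes number exactly $(p^N - 1)/(p - 1)$.

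I would establish the sharper claim $W_N^{\mathrm{ab}} \cong C_{p^n}^N$ by induction on $N$, using the wreath-product recursion $W_N = W_{N-1}^{p^n} \rtimes C_{p^n}$ (with $C_{p^n}$ cyclically permuting the $p^n$ coordinates). For a semidirect product $A \rtimes H$, the standard formula
\[
(A \rtimes H)^{\mathrm{ab}} \cong (A^{\mathrm{ab}})_H \oplus H^{\mathrm{ab}}
\]
expresses the abelianization in terms of the $H$-coinvariants of $A^{\mathrm{ab}}$. With $A = W_{N-1}^{p^n}$ and $H = C_{p^n}$, one has $A^{\mathrm{ab}} = (W_{N-1}^{\mathrm{ab}})^{p^n}$ as a cyclic permutation $C_{p^n}$-module, whose coinvariants collapse to $W_{N-1}^{\mathrm{ab}}$ via the sum-of-coordinates map. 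Together with the base case $W_1 = C_{p^n}$, induction yields $W_N^{\mathrm{ab}} \cong C_{p^n}^N$, and hence $W_N / \Phi(W_N) \cong W_N^{\mathrm{ab}} \otimes \FF_p \cong \FF_p^N$.

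The $(p^N - 1)/(p - 1)$ hyperplanes of this $N$-dimensional $\FF_p$-vector space pull back to the same number of distinct index-$p$ subgroups of $W_N$, each corresponding via the Galois correspondence to a distinct degree-$p$ extension of $\ell(t)$ inside $\ell \cdot K_N$. The principal technical point is the wreath-product abelianization: one must correctly identify the $C_{p^n}$-action on $W_{N-1}^{p^n}$ as cyclic permutation and verify that a cyclic permutation module on an abelian group $V$ has $V$ itself as its coinvariants, via the sum map (equivalently, that the image of $\sigma - 1$ coincides with the kernel of the augmentation). Both facts are routine but deserve explicit verification; note that the same argument actually shows that $(p^N-1)/(p-1)$ is the \emph{exact} count.
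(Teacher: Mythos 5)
Your proof is correct and follows essentially the same route as the paper's: invoke Lemma~\ref{first} to identify $\Gal(\ell\cdot K_N/\ell(t))$ with the full $N$-fold iterated wreath product $W_N$ of $C_{p^n}$, reduce to computing $W_N^{\mathrm{ab}}\cong C_{p^n}^N$, and then count. The only (minor) differences are that you prove the wreath-product abelianization from scratch via the coinvariants formula rather than citing de la Harpe as the paper does, and your final count is arguably cleaner: you pass to the Frattini quotient $\FF_p^N$ and count its hyperplanes directly, whereas the paper counts subgroups of \emph{order}~$p$ in $C_{p^n}^N$ and then tacitly relies on the duality between order-$p$ and index-$p$ subgroups in a finite abelian $p$-group to conclude.
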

 \begin{proof}
 
   By Lemma \ref{first}, the group $\Gal(\ell \cdot K_N / \ell(t))$ is
   isomorphic to the full $N$-th iterated wreath product of the cyclic
   group $C_{p^n}$.  The abelianization of
   $\Gal(\ell \cdot K_N/ \ell)$ is thus isomorphic to $C_{p^n}^N$, since
   the abelianization of any wreath product $A \wr B$ is isomorphic to
   the product of the abelianizations of $A$ and $B$ (see
   \cite[p.~215]{dH00}).
Therefore, it suffices to show that
 $C_{p^n}^N$ has at least $(p^N - 1)/(p-1)$ subgroups of index $p$.
   
 To see this, observe that the elements of $C_{p^n}^N$ of order $p$
  are precisely those of the form $(a_1 p^{n-1} , \ldots, a_N p^{n-1})$,
  where $a_1,\ldots,a_N\in\{0,\ldots, p-1\}$ are not all $0$.
  There are $p^N-1$ such elements, and each belongs to an equivalence class
  of size $p-1$ that generates the same subgroup of order $p$.
  Thus, there are indeed at least $(p^N-1)/(p-1)$ subgroups of index $p$ in $C_{p^n}^N$.    
 \end{proof}

 \begin{lemma}\label{count-final}
   Let $f(x) = x^{p^n} + c$ be a PCF polynomial defined over a field  $k$ of characteristic other than $p$,
   and let $N$ be the size of the forward orbit of the critical point 0.  
   Let $\ell$ be an algebraic extension of $k_1$ contained in $k_\infty$.  Then for every
   $L \subseteq  K_\infty$ with $[L:\ell(t)] = p$, we have
   $L \subseteq k_\infty \cdot K_N$.
 \end{lemma}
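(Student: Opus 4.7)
The plan is to combine a counting argument over $\kbar(t)$ with a Galois-theoretic descent back to $k_\infty(t)$. Since the only critical points of $f(x) = x^{p^n}+c$ are $0$ and $\infty$ (the latter being fixed by $f$), the post-critical set $S$ of $f$ in $\PP^1(\kbar)$ has at most $N+1$ elements, and the extension $\kbar\cdot K_\infty/\kbar(t)$ is unramified outside the places of $\kbar(t)$ corresponding to $S$. Lemma~\ref{geo} then produces at most $(p^N-1)/(p-1)$ degree $p$ normal extensions of $\kbar(t)$ unramified outside $S$. On the other hand, Lemma~\ref{count1} with $\ell=\kbar$ gives at least $(p^N-1)/(p-1)$ such extensions already inside $\kbar\cdot K_N$. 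These bounds match, so \emph{every} degree $p$ normal extension of $\kbar(t)$ inside $\kbar\cdot K_\infty$ lies in $\kbar\cdot K_N$.

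Now let $L\subseteq K_\infty$ satisfy $[L:\ell(t)]=p$, and set $L_0 := L\cdot k_\infty(t)\subseteq K_\infty$. Since $\ell\supseteq k_1$, Lemma~\ref{simple} applied with $k$ replaced by $k_1$ shows that $\Gal(K_\infty/k_1(t))$, and hence its closed subgroup $\Gal(K_\infty/k_\infty(t))$, is pro-$p$. The degree $[L_0:k_\infty(t)]$ is therefore a power of $p$; since it is bounded above by $[L:\ell(t)] = p$, it equals $1$ or $p$. If it equals $1$, then $L\subseteq k_\infty(t)\subseteq k_\infty\cdot K_N$ and we are done. Otherwise $L_0/k_\infty(t)$ is a degree $p$ Galois extension, its Galois group being a maximal subgroup of the pro-$p$ group $\Gal(K_\infty/k_\infty(t))$ (Lemma~\ref{pro-p}). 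Because $K_\infty\cap\kbar(t) = k_\infty(t)$ (a consequence of Lemma~\ref{first} applied at each finite level $K_m$, whose constant field $k_m$ is contained in $k_\infty$), Lemma~\ref{isomgal} gives $[L_0\cdot\kbar:\kbar(t)] = [L_0:k_\infty(t)] = p$. So $L_0\cdot\kbar$ is one of the degree $p$ normal extensions of $\kbar(t)$ inside $\kbar\cdot K_\infty$, and by the previous paragraph $L_0\cdot\kbar\subseteq\kbar\cdot K_N$.

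The final step is to descend this containment back to $K_\infty$. The isomorphism $\Gal(\kbar\cdot K_\infty/\kbar(t))\cong\Gal(K_\infty/k_\infty(t))$ provided by Lemma~\ref{isomgal} sets up a bijection $E\mapsto E\cdot\kbar$, with inverse $F\mapsto F\cap K_\infty$, between intermediate fields $E$ of $K_\infty/k_\infty(t)$ and intermediate fields $F$ of $\kbar\cdot K_\infty/\kbar(t)$. Applied to $E = k_\infty\cdot K_N$, this yields $\kbar\cdot K_N\cap K_\infty = k_\infty\cdot K_N$. Combining with $L_0\subseteq L_0\cdot\kbar\subseteq\kbar\cdot K_N$ and $L_0\subseteq K_\infty$, we conclude $L\subseteq L_0\subseteq k_\infty\cdot K_N$, completing the proof.

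I expect the main obstacle to be the descent step: the identification $\kbar\cdot K_N\cap K_\infty = k_\infty\cdot K_N$ is a standard linear-disjointness statement, but carefully tracking two applications of Lemma~\ref{isomgal} (first to make $L_0\cdot\kbar$ of the correct degree, then to match intersections with compositums) rests on the equality $K_\infty\cap\kbar(t) = k_\infty(t)$, which itself must be verified by applying Lemma~\ref{first} at each finite level $K_m$.
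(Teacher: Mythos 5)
Your argument is correct, but it takes a genuinely different route from the paper in the descent step. Both proofs start the same way: combine Lemma~\ref{geo} (giving at most $(p^N-1)/(p-1)$ degree-$p$ normal extensions of $\kbar(t)$ unramified outside the post-critical set) with Lemma~\ref{count1} to see that $\kbar\cdot K_N$ already realizes all of them. From there the paper works with the fixed base $\ell$: it defines $\phi\colon U_{\ell,N}\to U_{\kbar,\infty}$, $L'\mapsto\kbar\cdot L'$, proves injectivity by hand using $\kbar\cap(\ell\cdot K_N)=\ell$, deduces bijectivity from the cardinality squeeze, and then, given $L$, finds a preimage $L'$ of $\kbar\cdot L$ and extracts $L\subseteq L\cdot L'\subseteq k_\infty\cdot L'\subseteq k_\infty\cdot K_N$. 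You instead apply Lemma~\ref{count1} at $\ell=\kbar$ directly to conclude $U_{\kbar,\infty}\subseteq\{$subfields of $\kbar\cdot K_N\}$, replace $L$ by $L_0=L\cdot k_\infty(t)$, check $[L_0:k_\infty(t)]\in\{1,p\}$ and that $L_0$ is Galois over $k_\infty(t)$ (pro-$p$ + Lemma~\ref{pro-p}), push up by $\kbar$ to land in $\kbar\cdot K_N$, and descend via the translation-theorem identity $\kbar\cdot K_N\cap K_\infty=k_\infty\cdot K_N$. The two ingredients are the same, but your version lets the standard Galois correspondence between intermediate fields of $K_\infty/k_\infty(t)$ and of $\kbar\cdot K_\infty/\kbar(t)$ do the work that the paper's explicit injectivity lemma does; it is arguably a bit tidier, at the cost of leaning more heavily on the identity $K_\infty\cap\kbar(t)=k_\infty(t)$ (which you also need implicitly for $L_0\cap\kbar(t)=k_\infty(t)$). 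One small attribution issue: that identity is not literally an output of Lemma~\ref{first} at each finite level (Lemma~\ref{first} speaks only of the compositum $\ell\cdot K_N$ and the level $N$); it is the linear-disjointness statement $K_n\cap\kbar(t)=k_n(t)$, used implicitly in the paper via equation~\eqref{eq:ellkn}. It is true and standard, but you should cite it as such rather than as Lemma~\ref{first}.
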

 \begin{proof}
By the second statement of Lemma~\ref{first}, $\Gal(\kbar\cdot K_\infty / \kbar(t))$
is a subgroup of an iterated wreath product of $C_{p^n}$ and hence is
a pro-$p$ group. Therefore, by Lemma~\ref{pro-p}, every extension of $\kbar(t)$
of degree $p$ that is contained in $\kbar\cdot K_\infty$ is normal.
In addition, $\kbar\cdot K_\infty$ is unramified away from the set
$S=\{\infty\}\cup\{ (t-f^i(0)) | i\geq 0\}$ of primes of $k(t)$ corresponding
to the forward orbits of the critical points $\infty$ and $0$.
By hypothesis, we have $|S|=N+1$, and hence by Lemma~\ref{geo},
$\kbar \cdot K_\infty$ contains at most $(p^N-1)/(p-1)$
extensions of degree $p$ over $\kbar(t)$.

On the other hand,
by Lemma~\ref{count1}, the field $\ell \cdot K_N$ contains
at least $(p^N-1)/(p-1)$ extensions of degree $p$ over $\ell(t)$.
Thus, defining $U_{\ell,N}$ to be the set of subfields $L$
of $\ell\cdot K_N$ satisfying $[L:\ell(t)]=p$,
and defining
$U_{\kbar,\infty}$ to be the set of subfields $M$
of $\kbar\cdot K_{\infty}$ satisfying $[M:\kbar(t)]=p$,
this means that
\begin{equation}\label{eq:Uineq}
\big| U_{\kbar,\infty} \big| \leq \frac{p^N-1}{p-1}  \leq \big| U_{\ell,N} \big| .
\end{equation}
We claim that the mapping
$\phi: U_{\ell,N} \to U_{\kbar,\infty}$ by $L\mapsto \kbar\cdot L$ is one-to-one.

To prove the claim, first observe that any $L\in U_{\ell,N}$ is a geometric extension
of $\ell(t)$, since $L\subseteq\ell\cdot K_N$, and hence $L\cap\kbar=\ell$
by Lemma~\ref{first}.
Thus, we have
\[ [\kbar\cdot L : \kbar(t)]=[L:\ell(t)]=p, \]
so that $\phi(L)=\kbar\cdot L$ is indeed an element of $U_{\kbar,\infty}$.
In addition, if $L_1,L_2\in U_{\ell,N}$ satisfy $\phi(L_1)=\phi(L_2)$, then
\[ L_1\cdot L_2 \subseteq L_1\cdot L_2 \cdot \kbar =
L_1\cdot L_1 \cdot \kbar = \kbar \cdot L_1 . \]
At the same time, we also have $L_1\cdot L_2 \subseteq \ell \cdot K_N$, and hence
\[ L_1\cdot L_2 \subseteq (\kbar \cdot L_1) \cap (\ell\cdot K_N)
= \big(\kbar \cap (\ell\cdot K_N) \big) \cdot L_1 = \ell \cdot L_1 = L_1, \]
where the second equality is again by Lemma~\ref{first}.
Similarly, we also have $L_1\cdot L_2 \subseteq L_2$,
and hence $L_1=L_1\cdot L_2 = L_2$, proving the claim.

Given any $L$ as in the statement of the lemma, suppose first
that $L$ is not a geometric extension of $\ell(t)$.
Then $L\subseteq k_\infty (t) \subseteq k_{\infty} \cdot K_N$,
as desired.
Otherwise, the field $\kbar \cdot L$ is an extension
of $\kbar(t)$ of degree $p$ contained in $\kbar\cdot K_{\infty}$,
and hence $\kbar \cdot L \in U_{\kbar,\infty}$.
It follows from the claim and inequality~\eqref{eq:Uineq} that $\phi$ is bijective,
and hence there is some field $L'\in U_{\ell,N}$
so that $\kbar\cdot L = \kbar\cdot L'$.
As in the proof of the claim, it follows that
$L\cdot L' \subseteq \kbar \cdot L\cdot L' = \kbar\cdot L'$,
and hence $L\cdot L' \subseteq k_\infty \cdot L'$,
since $L\cdot L'\subseteq K_\infty$, which has constant field $k_\infty$.
Since $L'\subseteq \ell\cdot K_N$,
it follows that
\[ L\subseteq L\cdot L' \subseteq k_\infty \cdot L' 
\subseteq k_\infty \cdot (\ell \cdot K_N) \subseteq k_\infty \cdot K_N . \qedhere \]
\end{proof}

 \begin{thm}\label{geom}
  Let $f(x) = x^{p^n} + c$ be a PCF polynomial defined over a field
  $k$ of characteristic other than $p$.  
  Let $N$ be the size of the forward orbit of the critical point $0$.
  Then we have $G_{\alpha, \infty} = G_\infty$
  if and only if
 \begin{equation} \label{g2}
 \Gal(k_\infty \cdot K_{\alpha,N} / k_\infty) = \Gal(k_\infty \cdot K_N / k_\infty(t)).
\end{equation}
\end{thm}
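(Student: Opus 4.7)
The plan is to apply the Frattini-subgroup machinery from Section~\ref{more} after base-changing to the geometric constant field $k_\infty$. Both directions hinge on the $\ell = k_\infty$ cases of Proposition~\ref{any} and Lemma~\ref{con}, combined with Lemma~\ref{count-final}, which controls degree-$p$ extensions of $k_\infty(t)$ inside $K_\infty$.

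For the forward direction, I would take $\ell = k_\infty$ in Lemma~\ref{con}: if $G_{\alpha,\infty} = G_\infty$, then $\Gal(k_\infty \cdot K_{\alpha,\infty}/k_\infty) \cong \Gal(k_\infty \cdot K_\infty/k_\infty(t))$. In fact the proof of that lemma establishes $|\Gal(k_\infty \cdot K_{\alpha,m}/k_\infty)| = |\Gal(k_\infty \cdot K_m/k_\infty(t))|$ at every finite level $m$, which together with the injectivity of the natural specialization embedding yields \eqref{g2} at $m = N$.

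For the converse, set $G' := \Gal(K_\infty/k_\infty(t))$. By Lemma~\ref{first} applied over $\kbar$ and Lemma~\ref{simple}, the group $\Gal(\kbar \cdot K_\infty / \kbar(t))$ is pro-$p$, and $G' \cong \Gal(\kbar \cdot K_\infty / \kbar(t))$ via Lemma~\ref{isomgal} (using $\kbar(t) \cap K_\infty = k_\infty(t)$, which follows by taking a limit of the identities in equation~\eqref{eq:ellkn}), so $G'$ is pro-$p$. Let $F'$ be its Frattini subgroup; by Lemma~\ref{pro-p}, $F'$ is the intersection of all index-$p$ normal closed subgroups, so by the Galois correspondence the fixed field $(K_\infty)^{F'}$ is the compositum of all degree-$p$ extensions of $k_\infty(t)$ inside $K_\infty$. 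Lemma~\ref{count-final} with $\ell = k_\infty$ shows each such extension lies in $k_\infty \cdot K_N$, so
\[ (K_\infty)^{F'} \subseteq k_\infty \cdot K_N. \]
Let $H' \subseteq G'$ be the decomposition subgroup at the prime $(t-\alpha)$, identified with $\Gal(k_\infty \cdot K_{\alpha,\infty}/k_\infty)$. Hypothesis~\eqref{g2} says $H'$ surjects onto $\Gal(k_\infty \cdot K_N/k_\infty(t))$, and hence onto $G'/F'$. Adapting the proof of Lemma~\ref{H} to $G'$ in place of $G_\infty$ (via Lemma~\ref{easy}) forces $H' = G'$, giving $\Gal(k_\infty \cdot K_{\alpha,\infty}/k_\infty) \cong \Gal(k_\infty \cdot K_\infty/k_\infty(t))$, and Proposition~\ref{any} with $\ell = k_\infty$ then yields $G_{\alpha,\infty} = G_\infty$.

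The only substantive ingredient beyond base-change bookkeeping is the containment $(K_\infty)^{F'} \subseteq k_\infty \cdot K_N$, which is exactly the role of Lemma~\ref{count-final}; this is why $N$, the size of the forward orbit of $0$, appears as the sharp level in \eqref{g2}. Given that lemma, the Frattini argument proceeds in essentially the same way as in the proof of Theorem~\ref{more-general}.
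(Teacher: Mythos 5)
Your proposal is correct and follows essentially the same route as the paper. The only cosmetic difference: the paper first invokes Theorem~\ref{geo-does} with $\ell = k_\infty$ to reduce without loss to the case $k = k_\infty$, after which $G_\infty = \Gal(K_\infty/k_\infty(t))$ on the nose; this lets the forward direction be read off by restriction and lets Lemma~\ref{H} apply verbatim, whereas you work directly with $G' = \Gal(K_\infty/k_\infty(t))$ and note that the proof of Lemma~\ref{H} adapts. Same ingredients (Lemma~\ref{count-final} to trap the Frattini fixed field inside $k_\infty \cdot K_N$, then the Frattini/Lemma~\ref{H} argument), just a slightly cleaner bookkeeping on the paper's side.
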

\begin{proof}
Applying Theorem~\ref{geo-does} with $\ell=k_\infty$,
we may assume without loss that $k=k_\infty$.
The forward implication is then immediate by restriction
to $K_N$ and $K_{\alpha,N}$.

Conversely, suppose that equation~\eqref{g2} holds.
Then Lemma~\ref{count-final} with $\ell=k_\infty$
implies that every degree $p$ extension $L$ of
$k_\infty(t)$ in $K_\infty$ is contained in $k_\infty \cdot K_N$.
Hence, the fixed field $K_\infty^F$ is also
contained in $k_\infty \cdot K_N$, where $F$ is the Frattini
subgroup of $\Gal(K_\infty/k_\infty(t))$.

Recalling that we have assumed $k=k_\infty$,
let $L=k_\infty \cdot K_N = K_N$ and $H=G_{\alpha,\infty}$,
viewed as a (closed) subgroup of $G_{\infty}$.
The restriction of $H$ to $L$ is
\[ \Gal(k_\infty \cdot K_{\alpha,N} / k_\infty)
= \Gal(k_\infty \cdot K_N / k_\infty(t)) = \Gal(L/ k(t)),\]
by equation~\eqref{g2} and our assumption that $k=k_\infty$.
Therefore, by Lemma~\ref{H}, we have
$H=G_\infty$, as desired.
\end{proof}

\subsection{Proof of our second main theorem} We are ready to prove Theorem~\ref{quadratic3}, by obtaining a more precise verson of it, as stated below in Theorem~\ref{quadratic2}.

Because we work over a number field $k$, the field $k_\infty$ contains finitely
many extensions of $k$ of degree $p$ over $k_1$ by Lemma \ref{k}; let
$k'$ denote their compositum.
  
 \begin{thm}\label{quadratic2}
   Let $f(x) = x^{p^n} + c$ be a PCF polynomial defined over a number
   field $k$.  Let $N$ be the size of the forward orbit of the
   critical point $0$.  Let $k'$ be the compositum of the
   degree $p$ extensions of $k_1$ in $k_\infty$. 
   Then $G_{\alpha, \infty} = G_\infty$
   if and only if
  \begin{equation}\label{k11}
    |\Gal(k' \cdot K_{\alpha, N} / k)| =|G_N|\cdot [k': k_1]
  \end{equation}
\end{thm}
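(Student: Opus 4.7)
The plan is to deduce Theorem~\ref{quadratic2} from Theorem~\ref{geom} by recasting both sides of the desired equivalence as parallel statements about intermediate field degrees. To begin, I would translate condition~\eqref{g2}. By Lemma~\ref{isomgal} combined with the identity $k_N=k_1$ from Remark~\ref{rm}, the right-hand Galois group in~\eqref{g2} has order $[K_N:k_1(t)]=|W_N|$, where $W_N$ denotes the full iterated wreath product furnished by Lemma~\ref{first}; in particular $|G_N|=|W_N|\cdot[k_1:k]$. A second application of Lemma~\ref{isomgal}, together with Proposition~\ref{kn} (which ensures $k_1\subseteq K_{\alpha,N}$ under our standing separability assumption on $f^n(x)-\alpha$), rewrites the left-hand Galois group as having order $|G_{\alpha,N}|/\bigl([k_1:k]\cdot[k_\infty\cap K_{\alpha,N}:k_1]\bigr)$. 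Equating the two and rearranging shows that~\eqref{g2} is equivalent to the single identity $|G_{\alpha,N}|=|G_N|\cdot[k_\infty\cap K_{\alpha,N}:k_1]$, which, in view of the trivial bound $|G_{\alpha,N}|\leq|G_N|$, amounts to the joint condition that $|G_{\alpha,N}|=|G_N|$ and $k_\infty\cap K_{\alpha,N}=k_1$.

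Next, an entirely analogous algebraic manipulation translates~\eqref{k11}. Factoring $|\Gal(k'\cdot K_{\alpha,N}/k)|=[k':k'\cap K_{\alpha,N}]\cdot|G_{\alpha,N}|$ and expanding $|G_N|\cdot[k':k_1]=|W_N|\cdot[k':k'\cap K_{\alpha,N}]\cdot[k'\cap K_{\alpha,N}:k]$, one sees that~\eqref{k11} is equivalent to $|G_{\alpha,N}|=|G_N|\cdot[k'\cap K_{\alpha,N}:k_1]$, and hence to the joint condition that $|G_{\alpha,N}|=|G_N|$ and $k'\cap K_{\alpha,N}=k_1$. The theorem therefore reduces to showing that the two constant-field conditions $k_\infty\cap K_{\alpha,N}=k_1$ and $k'\cap K_{\alpha,N}=k_1$ are equivalent.

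The forward implication is immediate from $k'\subseteq k_\infty$. For the reverse, assume $k'\cap K_{\alpha,N}=k_1$ and set $M=k_\infty\cap K_{\alpha,N}$. As the intersection of two Galois extensions of $k$, the field $M$ is Galois over $k$ and hence over $k_1$; moreover, $\Gal(M/k_1)$ is a quotient of $\Gal(k_\infty/k_1)$, which is pro-$p$ because $G_\infty$ is pro-$p$ by Lemma~\ref{simple}. So $\Gal(M/k_1)$ is a finite $p$-group. If $M\neq k_1$, this nontrivial $p$-group contains a subgroup of index $p$, producing a degree-$p$ extension $M_0/k_1$ with $M_0\subseteq M\subseteq k_\infty$. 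By the very definition of $k'$ we then have $M_0\subseteq k'$, whence $M_0\subseteq k'\cap K_{\alpha,N}=k_1$, contradicting $[M_0:k_1]=p$. Hence $M=k_1$, as required.

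The main conceptual obstacle is this final paragraph. A priori, the field $k_\infty\cap K_{\alpha,N}$ could be a higher $p$-power extension of $k_1$ that is not itself contained in the degree-$p$ compositum $k'$, so one might worry that $k'$ is simply too small to detect it. What rescues the argument is the pro-$p$ structure of $\Gal(k_\infty/k_1)$: any nontrivial finite $p$-extension of $k_1$ inside $k_\infty$ must contain a degree-$p$ subextension, which is then forced to lie in $k'$ by construction.
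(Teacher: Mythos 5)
Your proposal is correct and follows the same essential strategy as the paper: both reduce the problem, via Theorem~\ref{geom} (or Theorem~\ref{geo-does} with $\ell=k'$), to comparing the constant-field intersections $k_\infty\cap K_{\alpha,N}$ and $k'\cap K_{\alpha,N}$ with $k_1$, and both use the $p$-group structure to force a degree-$p$ subextension into $k'$. Your reformulation is a genuine (if modest) streamlining: by translating~\eqref{g2} and~\eqref{k11} into the common shape ``$|G_{\alpha,N}|=|G_N|$ and (constant-field condition)'', you make the equivalence symmetric, so that only the single implication $k'\cap K_{\alpha,N}=k_1 \Rightarrow k_\infty\cap K_{\alpha,N}=k_1$ requires work; the paper instead argues the two directions of the biconditional separately, with the same field-degree bookkeeping appearing in different guises.

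One imprecision worth fixing: you justify that $\Gal(k_\infty/k_1)$ is pro-$p$ by saying ``$G_\infty$ is pro-$p$ by Lemma~\ref{simple}.'' That is not quite right, since $G_\infty=\Gal(K_\infty/k(t))$ is pro-$p$ only when $k=k_1$ (i.e., when the $p^n$-th roots of unity already lie in $k$), and Lemma~\ref{simple} as stated assumes $G_1$ is a $p$-group. What you actually need (and what is available) is that $\Gal(K_\infty/k_1(t))$ is pro-$p$ — either apply Lemma~\ref{simple} after base change to $k_1$, or observe directly from Lemma~\ref{first} that $\Gal(K_N/k_1(t))$ is an iterated wreath product of $C_{p^n}$ and hence a $p$-group — and that $\Gal(k_\infty/k_1)$ (and hence $\Gal(M/k_1)$) is a quotient of it. Alternatively, you could mimic the paper's route and work with $\Gal(K_{\alpha,N}/k_1)$, which embeds as a decomposition subgroup of the $p$-group $\Gal(K_N/k_1(t))$. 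With this correction, the argument in your final paragraph goes through exactly as you describe.
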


\begin{proof}
Suppose that $G_{\alpha, \infty} = G_\infty$.
Applying Theorem~\ref{geo-does}, we see that
$\Gal(K_\infty / k'(t)) = \Gal(K_{\alpha,\infty} / k')$,
and hence that $|\Gal(k' \cdot K_N / k'(t))| = |\Gal(k' \cdot K_{\alpha, N} / k')|$.
Therefore,
\begin{align*}
|\Gal(k' \cdot K_{\alpha, N} / k)| &= |\Gal(k' \cdot K_N / k'(t))| [k':k]
= \frac{|\Gal(K_N/k(t))|}{[k'(t) \cap K_N:k(t)]}  [k':k] \\
& = \frac{|G_N|}{[k' \cap K_N:k]}  [k':k] = |G_N|\cdot [k': k_1],
\end{align*}
where the second equality is by Lemma~\ref{isomgal},
and the fourth is because
$k' \cap K_N = k_1$, by Remark~\ref{rm}.

Conversely, suppose that \eqref{k11} holds. We have
\begin{equation}\label{gn}
   [K_{\alpha, N}: k] = |G_{\alpha,N}| \leq |G_N|
\end{equation}
and also, by Lemma~\ref{isomgal} and the fact that $k_1\subseteq k'\cap K_{\alpha,N}$,
\begin{equation}\label{kg}
  [k' \cdot K_{\alpha, N}: K_{\alpha, N}] \leq [k': k_1].
\end{equation}
Therefore, by equation~\eqref{k11}, we have
\[ |G_N| \cdot [k':k_1]
= [k' \cdot K_{\alpha, N}: k]
= [k' \cdot K_{\alpha, N}: K_{\alpha, N}] \cdot [K_{\alpha, N}: k]
\leq [k':k_1] \cdot |G_N|, \]
so that we must have equality in both~\eqref{gn} and~\eqref{kg}.

Let $\ell=k_\infty\cap K_{\alpha,N}\supseteq k_1$. We claim that $\ell=k_1$.
To see this, note by Lemma~\ref{first} that $\Gal(K_N/k_1(t))$ is a $p$-group,
and hence so is the subgroup $H=\Gal(K_{\alpha,N}/k_1)$.
If $\ell\neq k_1$, then $\ell$ is a nontrivial extension of $k_1$ contained in $K_{\alpha,N}$,
and therefore $\Gal(K_{\alpha,N}/\ell)$ is contained in a maximal subgroup $H'$ of $H$,
which must have index $p$ in $H$. The fixed field of $H'$ is therefore an extension
$\ell'$ of $k_1$ of degree $p$ and contained in $k_{\infty}$,
so we have $\ell'\subseteq k'$ by definition of $k'$.
Therefore, by the same reasoning as in inequality~\eqref{kg},
\[ [k'\cdot K_{\alpha, N}: K_{\alpha, N}] \leq [k' : \ell'] = \frac{1}{p} [k' : k_1]
< [k':k_1] = [k'\cdot K_{\alpha, N}: K_{\alpha, N}], \]
where the final equality is because we showed above that~\eqref{kg} is an equality.
This contradiction proves our claim.

By the claim and Lemma~\ref{isomgal}, we have
\begin{align}
\label{eq:Kansize}
[k_\infty \cdot K_{\alpha, N}: k_\infty] &= [K_{\alpha, N}: k_1] 
= \frac{[K_{\alpha,N} : k]}{[k_1:k]} = \frac{|G_N|}{[k_1(t):k(t)]}
\notag \\
& = [K_N: k_1(t)] = [k_\infty \cdot K_N: k_\infty(t)].
\end{align}
Here, the third equality is because $[k_1(t):k(t)]=[k_1:k]$ and
because we showed~\eqref{gn} is an equality.
The fifth is by Lemma~\ref{isomgal} again, together with
the fact that $k_\infty \cap K_N = k_1$, by Remark~\ref{rm}.

Equation~\eqref{eq:Kansize} shows that the subgroup
$\Gal(k_{\infty}\cdot K_{\alpha,N} / k_\infty)$ of 
$\Gal(k_{\infty}\cdot K_N / k_\infty(t))$ is the whole group.
Applying Theorem \ref{geo-does} then gives
$G_{\alpha, \infty} = G_\infty$, as desired.  
\end{proof}

\section{Further questions}\label{further}

In light of Theorem \ref{more-general}, it is natural to ask whether the
Frattini subgroup of $G_\infty$ has finite index in $G_\infty$ for any
post-critically finite rational function defined over a number field.
Unfortunately, this is not the case, as \cite{BEK} gives examples
where $G_\infty$ is the infinite iterated wreath product of the
alternating group $A_d$.  It is easy to see that the infinite iterated
wreath product of any nontrivial group has infinitely many closed maximal
subgroups. It would be interesting to know wheter the Frattini subgroup of
$G_\infty$ has finite index in $G_\infty$ in the case where $f$ is a PCF
polynomial of the form $f(x) = x^m + c$ for $m$ not a prime power.

We would also like to explore a more general form of Odoni's
conjecture.  Recall that a field $k$ is said to be \emph{Hilbertian} if the
complement of any thin set in $k$ is infinite.  Odoni
\cite{OdoniIterates} conjectures that for any integer $d \geq 2$
and any Hilbertian field $k$ of characteristic 0, there is a polynomial
$f$ and an $\alpha \in k$ such that $G_{\alpha, \infty}$ is the full
automorphism group of $T^d_\infty$.  Dittman and Kadets \cite{DK}
have given counterexamples to this conjecture in every degree.

More generally, given a particular polynomial $f$ defined over a
number field $\ell$, one might ask whether it is true that for any
Hilbertian field $k$ containing $\ell$, there are infinitely many
$\alpha \in k$ such that $G_\infty = G_{\alpha, \infty}$.  (We note
here that the Galois groups are taken relative to $k$, not $\ell$, so
that $K_{n,\alpha} = k(f^{-n}(\alpha))$.)
For non-PCF quadratic polynomials, the the answer is no, using the
results of \cite{DK}. In fact, we have the following stronger result.

\begin{prop}
\label{DKquad}
Let $\ell$ be a number field, and let $f\in\ell[x]$ be a quadratic polynomial
that is not PCF. Then there is a Hilbertian field $k$ that is algebraic over $\ell$
such that for all $\alpha\in k$, the group $G_{\alpha,\infty}$
has infinite index in $G_\infty$.
\end{prop}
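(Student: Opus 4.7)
The plan is to adapt the Dittmann--Kadets construction from \cite{DK}, which produces Hilbertian algebraic extensions of a number field in which prescribed values are forced to become squares. The goal is to arrange, for every $\alpha\in k$, that infinitely many level-by-level square-type obstructions to $G_{\alpha,\infty}=G_\infty$ are simultaneously triggered.

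First, after conjugating $f$ by a linear polynomial over $\ell$, we may assume $f(x)=x^2+c$ with $c\in\ell$; since $f$ is not PCF, the forward critical orbit $\{f^n(0) : n\geq 1\}$ is infinite. The classical analysis of the arboreal representation of quadratic polynomials (Stoll, Jones) attaches to each level $n\geq 1$ a polynomial $\Delta_n(x)\in\ell[x]$, built from iterates of $f$ at the critical point, whose square class controls whether $G_{\alpha,n}$ is forced into a proper index-$2$ subgroup of the generic group $G_n$. Because the critical orbit is infinite, the $\Delta_n$ furnish infinitely many independent obstructions at the level of abelian quotients of $G_\infty$; consequently, if infinitely many of the values $\Delta_n(\alpha)$ are squares in $k$, then $G_{\alpha,\infty}$ has infinite index in $G_\infty$.

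Given this reduction, the proof is completed by applying a diagonalized form of the main construction of \cite{DK}: starting from $\ell$, iteratively adjoin square roots $\sqrt{\Delta_n(\alpha)}$ along a suitable enumeration of pairs $(n,\alpha)$, invoking at each step the Weissauer-type Hilbertianity preservation results exploited in \cite{DK}. This produces a Hilbertian algebraic extension $k/\ell$ such that every $\alpha\in k$ satisfies $\Delta_n(\alpha)\in(k^{\times})^2$ for infinitely many $n$, yielding $[G_\infty : G_{\alpha,\infty}]=\infty$ for all $\alpha\in k$. The main obstacle is to verify that for non-PCF $f$ the polynomials $\Delta_n$ really do furnish non-redundant square-ness conditions for \emph{every} $\alpha\in k$ rather than only generic ones (the non-PCF hypothesis is essential here: in the PCF case the values $\Delta_n(\alpha)$ would lie in a finitely generated subgroup of $k^{\times}/(k^{\times})^2$ and the construction would collapse, consistently with Theorem~\ref{hilbert-plus}), and to ensure that the diagonal square-root adjunctions do not destroy Hilbertianity; both concerns are addressed by the machinery of \cite{DK}, suitably transported to the non-PCF quadratic setting.
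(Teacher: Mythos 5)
Your proposal takes a substantially harder route than the paper and, as written, has genuine gaps. The paper's proof is much shorter: it first invokes Pink's theorem \cite[Theorem~4.8.1(a)]{PinkQuadraticInfiniteOrbits} (or \cite[Theorem~3.1]{JKMT}), which says that for a non-PCF quadratic polynomial $G_\infty$ is \emph{all} of $\Aut(T^2_\infty)$, and then cites the Dittmann--Kadets Theorem 1.2 of \cite{DK} as a black box: there is a Hilbertian $k$ algebraic over $\ell$ such that for \emph{every} quadratic $g\in k[x]$, the arboreal Galois group of $g$ with base point $0$ over $k$ has infinite index in $\Aut(T^2_\infty)$. The final ingredient is the conjugation trick $g(x)=f(x+\alpha)-\alpha$, which gives $g^{-n}(0)=f^{-n}(\alpha)-\alpha$ and hence identifies $G_{\alpha,\infty}$ for $f$ with the base-point-$0$ arboreal group for $g$; since $g\in k[x]$ for every $\alpha\in k$, the DK conclusion applies to it directly.

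By contrast, your plan tries to \emph{re-derive} the Dittmann--Kadets construction tailored to $f$, adjoining square roots $\sqrt{\Delta_n(\alpha)}$ over an ``enumeration of pairs $(n,\alpha)$.'' Two problems. First, the enumeration is circular: $\alpha$ ranges over the field $k$ you are in the middle of constructing, so you cannot simply enumerate the pairs up front; this is exactly the delicate point that \cite{DK} handle by a careful transfinite construction, and glossing it over hides most of the work. Second, for the $\Delta_n$ obstructions to imply infinite index \emph{in $G_\infty$} (rather than merely in $\Aut(T^2_\infty)$), you implicitly need $G_\infty=\Aut(T^2_\infty)$, which is precisely Pink's theorem for non-PCF quadratics; your appeal to ``infinitely many independent obstructions at the level of abelian quotients of $G_\infty$'' asserts this without justification. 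Once you do cite Pink, the conjugation trick makes your entire re-derivation unnecessary: DK's result already covers every quadratic over $k$ with base point $0$, and translation by $\alpha$ turns an arbitrary base point into $0$. That one-line conjugation is the key idea your sketch is missing.
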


\begin{proof}
Since $\deg f=2$, there are exactly two critical points. Because $f$
is a non-PCF polynomial, the two critical orbits are disjoint, and one of those
orbits is infinite.
Therefore, by \cite[Theorem~4.8.1(a)]{PinkQuadraticInfiniteOrbits},
$G_\infty$ is the full automorphism group of $T^2_\infty$.
(See also \cite[Theorem~3.1]{JKMT}.)

According to \cite[Theorem~1.2]{DK}, there is 
some Hilbertian $k$, algebraic over $\ell$, such that for any quadratic polynomial
$g\in k[x]$, the arboreal Galois group for $g$ with base point $0$ over $k$ has
infinite index in $\Aut(T^2_\infty)$.
For any $\alpha\in k$, defining $g(x)=f(x+\alpha)-\alpha\in k[x]$,
it follows that $G_{\alpha,\infty}$ has infinite index in
$\Aut(T^2_\infty)=G_{\infty}$.
\end{proof}

On the other hand, 
combining the results of this paper with Pink's classification of
$k_\infty$ for PCF quadratic polynomials, rather than non-PCF,
the question above has a positive answer, as follows.

\begin{thm}
  Let $f(x) = x^2 + c$ be a PCF quadratic polynomial defined over a
  number field $\ell$.  Let $k$ be a Hilbertian field containing
  $\ell$.  Then there are infinitely many $\alpha \in k$ such that
  $G_\infty = G_{\alpha, \infty}$.
\end{thm}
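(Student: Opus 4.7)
The plan is to combine Pink's classification of $\ell_\infty$ for PCF quadratic polynomials with the Frattini-subgroup machinery of Sections~\ref{ext}~and~\ref{more} together with the Hilbertianness of $k$. The key external input is Pink's theorem that, for $f(x)=x^2+c$ a PCF quadratic polynomial over a number field $\ell$, the constant field $\ell_\infty:=\overline{\ell}\cap \ell(f^{-\infty}(t))$ is a \emph{finite} extension of $\ell$. This finiteness is precisely what lets us bypass the number-field hypothesis built into Theorem~\ref{more-general} (which enters through Theorem~\ref{k}) and work directly over the Hilbertian field $k$.

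The first step is to transport Pink's result up to $k$. Since $f\in \ell[x]$, we have $K_n = k\cdot L_n$ for each $n$, where $L_n=\ell(f^{-n}(t))$. The extension $L_n/\ell_n(t)$ is regular, and regularity is preserved by the base change to $(k\cdot \ell_n)(t)$; one concludes that the algebraic closure of $k$ in $K_n$ is exactly $k\cdot \ell_n$. Passing to the limit gives $k_\infty = k\cdot \ell_\infty$, which by Pink is a finite extension of $k$. In particular, $k_\infty$ contains only finitely many subextensions of degree $2$ over $k$.

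With this in hand, the rest of the argument essentially mimics the proof of Theorem~\ref{hilbert-plus}. Since $\deg f=2$, the group $\Gal(K_1/k_1(t))$ is a $2$-group (indeed $k_1=k$), so Lemma~\ref{number} (with $p=2$) applies and shows that $K_\infty^F$ is a finite extension of $k(t)$, where $F$ denotes the Frattini subgroup of $G_\infty$. Then Lemma~\ref{almost} produces an integer $m\ge 1$ such that $G_\infty = G_{\alpha,\infty}$ whenever $G_m = G_{\alpha,m}$. Finally, $K_m/k(t)$ is a finite Galois extension, so the set of $\alpha\in k$ for which the specialization $K_{\alpha,m}/k$ has Galois group strictly smaller than $G_m$ is a thin subset of $k$; since $k$ is Hilbertian, its complement in $k$ is infinite, and every $\alpha$ in this complement satisfies $G_\infty=G_{\alpha,\infty}$.

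The delicate point is the identification $k_\infty=k\cdot \ell_\infty$, which is what carries Pink's number-field input across the base change from $\ell$ to a possibly very large Hilbertian $k$. Once this is established, the rest of the machinery developed in the paper plugs in essentially verbatim, with Pink's finiteness of $\ell_\infty$ taking over the role played by Theorem~\ref{k} in the proof of Theorem~\ref{hilbert-plus}.
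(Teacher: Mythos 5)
Your overall strategy is the same as the paper's (Pink's result $\Rightarrow$ finitely many quadratic subextensions of $k_\infty/k$ $\Rightarrow$ Lemma~\ref{number} $\Rightarrow$ Lemma~\ref{almost} $\Rightarrow$ Hilbert irreducibility), and the base-change observation $k_\infty = k\cdot\ell_\infty$ is a reasonable and useful supplement. However, there is a genuine error in the key external input: you assert that Pink shows $\ell_\infty$ is a \emph{finite} extension of $\ell$, and this is false. What Pink \cite{PinkQuadratic} actually shows, and what the paper cites, is that $k_\infty$ is contained in the field generated over $k$ by all $2^n$-th power roots of unity, i.e.\ $k_\infty\subseteq k(\mu_{2^\infty})$. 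That cyclotomic field is typically an \emph{infinite} extension of $k$, and so is $\ell_\infty$ itself in general. A concrete counterexample to your claim is $f(x)=x^2-2$, which is PCF (critical orbit $0\mapsto -2\mapsto 2\mapsto 2$): here the geometric Galois groups are dihedral rather than the full wreath product, and the constant field extension $\ell_\infty/\ell$ contains the full real cyclotomic $2$-tower $\QQ(\zeta_{2^n}+\zeta_{2^n}^{-1})$ for all $n$, so $[\ell_\infty:\ell]=\infty$.

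The conclusion you need --- that $k_\infty$ contains only finitely many quadratic extensions of $k$ --- does still hold, but for a different reason than the one you gave: $\Gal(k(\mu_{2^\infty})/k)$ embeds into $\ZZ_2^\times\cong\ZZ/2\ZZ\times\ZZ_2$, so it is abelian and topologically finitely generated, hence has only finitely many open subgroups of index $2$; any quadratic subextension of $k_\infty$ corresponds to such a subgroup. With this correction (replace ``$\ell_\infty/\ell$ is finite'' by ``$\ell_\infty\subseteq \ell(\mu_{2^\infty})$, so $k_\infty\subseteq k(\mu_{2^\infty})$ has only finitely many quadratic subextensions of $k$''), the remainder of your argument --- invoking Lemma~\ref{number} with $p=2$, then Lemma~\ref{almost} to get an integer $m$, then Hilbertianness of $k$ to produce infinitely many $\alpha$ with $G_m=G_{\alpha,m}$ --- is correct and essentially identical to the paper's proof.
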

\begin{proof}
  Pink \cite{PinkQuadratic} shows that $k_\infty$
is contained in
  the field generated by all $2^n$-th roots of unity over $k$.
  Thus, $k_\infty$ contains at most finitely many quadratic
  extensions of $k$.  Since $G_1$ is obviously a $2$-group, we have
  that $K_\infty^F$ is a finite extension of $k(t)$, by Lemma~\ref{number},
  where $F$ is the Frattini subgroup of $G_\infty$.
  Lemma \ref{almost} then implies that there is an $m\geq 1$ such that
  $G_\infty = G_{\alpha,\infty}$ whenever $G_m = G_{\alpha,m}$.  Since
  the set of $\alpha \in k$ such that $G_m = G_{\alpha,m}$ is the
  complement of a thin set in $k$ (see \cite[9.2, Proposition
  2]{Serre}), there are thus infinitely many $\alpha \in k$ such that
  $G_\infty = G_{\alpha,\infty}$ because $k$ is Hilbertian.
\end{proof}

It would be interesting to know whether there are other families of PCF
polynomials for which this variant of the Odoni conjecture holds.  

\medskip

\textbf{Acknowledgments}.
The first author gratefully acknowledges the support of NSF grant DMS-2101925. The second author was partially supported by an NSERC Discovery Grant. 
The authors thank
Vesselin Dimitrov,
Philipp Habegger,
Alexander Hulpke,
Jonathan Pakianathan,
Wayne Peng,
Harry Schmidt, 
Romyar Sharifi,
and David Zureick-Brown,
for helpful conversations.

\end{document}